\newtheorem{teo}{Theorem}[section]
\newtheorem{pro}[teo]{Proposition}
\newtheorem{coro}[teo]{Corollary}
\newtheorem{lem}[teo]{Lemma}
\theoremstyle{definition}
\newtheorem{defi}[teo]{Definition}
\newtheorem{exam}[teo]{Example}
\newtheorem{rem}[teo]{Remark}
\newtheorem{nota}[teo]{Notation}
\newcommand{\N}{\mathbb N}
\newcommand{\Z}{\mathbb Z}
\newcommand{\R}{\mathbb R}
\newcommand{\C}{\mathbb C}
\newcommand{\m}{\mathfrak{m}}
\newcommand{\be}{\beta}
\newcommand{\ga}{\gamma}
\DeclareMathOperator{\spn}{span}
\begin{document}

\title{Factorization of the normalization of the Nash blow-up of order $n$ of $\mathcal{A}_{n}$ by the minimal resolution}
\author{Enrique Ch\'avez-Mart\'inez}
\maketitle

\begin{abstract}
We show that the normalization of the Nash blow-up of order $n$ of the toric surface singularity $\mathcal{A}_{n}$ can be factorized by the minimal resolution of $\mathcal{A}_{n}$. The result is obtained using the combinatorial description of these objects.


\end{abstract}



\section*{Introduction}

The Nash blowup of an algebraic variety is a modification that replaces singular points by limits of tangent spaces at non-singular points. It was proposed to solve singularities by iterating this process \cite{No, S}. This question has been treated in \cite{No,R,GS-1,GS-2,Hi,Sp,At, T, dFDo}. The particular case of toric varieties is treated in \cite{GS-1, GT, GM, D1, DG} using their combinatorial structure.

There is a generalization of Nash blowups, called higher Nash blowups or Nash blowups of order $n$, that was proposed by Takehiko Yasuda. This modification replaces singular points  by limits of infinitesimal neighborhoods of certain order at non-singular points. In particular, the higher Nash blowup looks for resolution of singularities in one step \cite{Y1}. T. Yasuda proves that this is true for curves in characteristic zero, but conjectures that is false in general, proposing as a counterexample the toric surface $\mathcal{A}_{3}$. 


There are several papers that deal with higher Nash blowups in the special case of toric varieties. The usual strategy for this special case is to translate the original geometric problem into a combinatorial one and then try to solve the latter. So far, the combinatorial description of higher Nash blowups of toric varieties has been obtained using Gr\"{o}ebner fans or higher-order Jacobian matrices.

The usage of Gr\"{o}ebner fans for higher Nash blowups of toric varieties was initiated in \cite{D2}. Later, this tool was further developed in \cite{T} to show that the Nash blowup of order $n$ of the toric surface singularity $\mathcal{A}_{3}$ is singular for any $n > 0$, over the complex numbers. This problem was later revisited to show that it is also holds in prime characteristic \cite{DN}. 


The techniques from \cite{T} can be used to compute the Gr\"{o}ebner fan of the normalization of higher Nash blowup of $\mathcal{A}_{n}$ for some $n$'s. Those computations suggest that the essential divisors of the minimal resolution of $\mathcal{A}_{n}$ appear in the normalization of the Nash blow-up of order $n$ of $\mathcal{A}_{n}$ for some $n$'s. The main goal of this paper is to show that this happens for all $n$. In particular, this implies that the normalization of the Nash blowup of order $n$ of $\mathcal{A}_{n}$ factors through its minimal resolution (see Corollary \ref{ct}). 

The approach to study higher Nash blowups of toric varieties using a higher order Jacobian matrix was initiated in \cite{ChDG}. That paper deals with a conjecture proposed by T. Yasuda  concerning the semigroup associated to the higher Nash blowup of formal curves. There it was proved that the conjecture is true in the toric case but false in general. This was achieved by studying properties of the higher order Jacobian matrix of monomial morphisms. In this paper we follow a similar but more general approach. 


The normalization of the higher Nash blowup of $\mathcal{A}_{n}$ is a toric variety associated to a fan that subdivides the cone determining $\mathcal{A}_{n}$ \cite{ChDG, GT}. An explicit description of this fan could be obtained by effectively computing all minors of the corresponding higher order Jacobian matrix. This is a difficult task given the complexity of the matrix for large $n$. However, for the problem we are interested in, we do not require an explicit description of the entire fan.

The rays that subdivides the cone of $\mathcal{A}_{n}$ to obtain its minimal resolution can be explicitly specified. Thus, in order to show that these rays appear in the fan associated to the normalization of the higher Nash blowup we need to be able to control only certain minors of the matrix. A great deal of this paper is devoted to construct combinatorial tools that allow us to accomplish that goal.

\begin{section}{The main result}
In this section we state the main result of this work. First, we introduce some notation that will be constantly used throughout this paper. \textit{From now on, $n$ will always denote a fixed positive natural number}.

\begin{nota}\label{notacion}Let $\ga,\be\in\N^{t}$ and $v\in\N^{2}$.
\begin{itemize}
    \item[1)] We denote $\pi_{i}(\be)$  the projection to the $i$-th  coordinate of $\be$.
    \item[2)] $\ga\leq\be$ if and only if $\pi_{i}(\ga)\leq\pi_{i}(\be)$ for all $i\in\{1,\ldots,t\}$. In particular, $\ga<\be$ if and only if $\ga\leq\be$ and $\pi_{i}(\ga)<\pi_{i}(\be)$ for some $i\in\{1,\ldots,t\}$.
    \item[3)] $\binom{\beta}{\ga}:=\prod_{i=1}^{t}\binom{\pi_{i}(\be)}{\pi_{i}(\ga)}.$
    
    \item[4)]$|\beta|=\sum_{i=1}^{t}\pi_{i}(\be)$.
    \item[5)]$\Lambda_{t,n}:=\{\be\in\N^{t}\mid 1\leq|\be|\leq n\}$. In addition, $\lambda_{t,n}:=|\Lambda_{t,n}|=\binom{n+t}{n}-1$.
    \item[6)]$\bar{v}:=\Big{(}\binom{v}{\alpha}\Big{)}_{\alpha\in\Lambda_{2,n}}\in\N^{\lambda_{2,n}}.$
    \item[7)]Let $A_{n}:=\begin{pmatrix}
1 & 1 & n\\
0 & 1 & n+1
\end{pmatrix}.$
    \item[8)]Given $J\subset\Lambda_{3,n}$, let $m_{J}:=\sum_{\be\in J}A_{n}\be\in\N^{2}.$

\end{itemize}
\end{nota}

Let $X\subset\C^{s}$ be an irreducible algebraic variety of dimension $d$. For a non-singular point $x\in X$, the $\C$-vector space $(\m_{x}/\m_{x}^{n+1})^{\vee}$ has dimension $\lambda_{d,n}$, where $\m_{x}$ denotes the maximal ideal of $x$.

\begin{defi}
{\cite{No, OZ, Y1}} With the previous notation, consider the morphism of Gauss: \begin{align}
G_n:X\setminus Sing(X&)\rightarrow Gr(\lambda_{d,n},\C^{\lambda_{s,n}})\notag\\
x&\mapsto (\m_{x}/\m_{x}^{n+1})^{\vee},\notag
\end{align}
where $Sing(X)$ denotes the singular locus of $X$ and $Gr(\lambda_{d,s},\C^{\lambda_{s,n}})$ is the Grassmanian of vector subspaces of dimension $\lambda_{d,n}$ in $\C^{\lambda_{s,n}}$. 

    Denote by $Nash_{n}(X)$ the Zariski closure of the graph of $G_{n}$. Call $\pi_{n}$ the restriction to $Nash_{n}(X)$ of the projection of $X\times Gr(\lambda_{d,n},\C^{\lambda_{s,n}})$ to $X$. The pair $(Nash_{n}(X),\pi_{n})$ is called the Nash blow-up of $X$ of order $n$.
\end{defi}

This entire paper is devoted to study some aspects of the higher Nash blowup of the $\mathcal{A}_n$ singularity. Let us recall its definition and the notation we will use. 

\begin{defi}\label{an} Consider the cone $\sigma_{n}=\R_{\geq0}\{(0,1),(n+1,-n)\}\subset(\R^{2})^{\vee}$. 
We denote as $\mathcal{A}_{n}$ the normal toric surface corresponding to $\sigma_{n}$, i.e., $\mathcal{A}_{n}=V(xz-y^{n+1})$.
\end{defi}


In \cite{ChDG}, the higher Nash blowup is studied through a higher-order Jacobian matrix. It is worth mentioning that there are other versions of higher order Jacobian matrices \cite{D3, BD, HJN}. In the context of toric varieties, that matrix gave place to the following definition. 

\begin{defi}{\cite[Proposition 2.4]{ChDG}}
Let $J\subset\Lambda_{3,n}$ be such that $|J|=\lambda_{2,n}$. We define the matrix 
$$L_{J}^{c}:=\Big{(}c_{\be}\Big{)}_{\be\in J},$$
where $c_{\be}=\sum_{\ga\leq \be}(-1)^{|\be-\ga|}\binom{\be}{\ga}\overline{A_{n}\ga}\in\N^{\lambda_{2,n}}$. In addition, we denote $$S_{A_{n}}:=\{J\subset\Lambda_{3,n}\mid |J|=\lambda_{2,n}\,\,\,\mbox{and}\,\,\,\det L_{J}^{c}\neq 0\}.$$
\end{defi}

\begin{pro}{\cite[Proposition 3.15]{ChDG}}\label{propomin}
Let $I_{n}=\langle x^{m_{J}}\mid J\in S_{A_{n}}\rangle$. Then $Nash_{n}(\mathcal{A}_{n})\cong Bl_{I_{n}}(\mathcal{A}_{n})$, where $Bl_{I_{n}}(\mathcal{A}_{n})$ is the blow-up of $\mathcal{A}_{n}$ centered on $I_{n}$.

\end{pro}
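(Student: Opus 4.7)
The plan is to realize $Nash_n(\mathcal{A}_n)$ as a blow-up along a concrete ideal of maximal minors of a higher-order Jacobian matrix, and then to identify that ideal with $I_n$ via an explicit expansion using the monomial parameterization of $\mathcal{A}_n$. I would first invoke the general fact that $Nash_n(X)$ --- being the closure of the graph of the Gauss morphism $G_n$ --- is isomorphic, on the smooth locus, to the blow-up of $X$ along the ideal generated by the $\lambda_{d,n}$-sized maximal minors of the matrix that encodes $G_n$. For a parameterization $\varphi:\C^d\to X\subset\C^s$, this matrix at a point $v$ records the Taylor coefficients of $(\varphi(t)-\varphi(v))^{\beta}$ in the local coordinates $h=t-v$, indexed by $(\alpha,\beta)\in\Lambda_{d,n}\times\Lambda_{s,n}$; these coefficients determine the subspace of $(\mathfrak{m}_{0,s}/\mathfrak{m}_{0,s}^{n+1})^{\vee}$ picked out by $G_n(\varphi(v))$ after the standard translation identification.

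Next, for the monomial parameterization $\varphi(t_1,t_2)=(t_1,\,t_1 t_2,\,t_1^n t_2^{n+1})$ of $\mathcal{A}_n$, whose exponents are the columns of $A_n$, I would compute
\[
(\varphi(t)-\varphi(v))^{\beta}\;=\;\prod_{i=1}^{3}\bigl(t^{A_n e_i}-v^{A_n e_i}\bigr)^{\beta_i}\;=\;\sum_{\gamma\leq\beta}(-1)^{|\beta-\gamma|}\binom{\beta}{\gamma}v^{A_n(\beta-\gamma)}\,t^{A_n\gamma},
\]
and then Taylor-expand each $t^{A_n\gamma}$ around $v$. Collecting the coefficient of $h^{\alpha}$ produces exactly $v^{A_n\beta-\alpha}(c_\beta)_\alpha$, so the higher-order Jacobian matrix at $v$ has entries $v^{A_n\beta-\alpha}(c_\beta)_\alpha$. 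For $J\subset\Lambda_{3,n}$ with $|J|=\lambda_{2,n}$, the $J$-th maximal minor then factors as
\[
\det\bigl(v^{A_n\beta-\alpha}(c_\beta)_\alpha\bigr)_{\alpha,\beta}\;=\;v^{m_J-S}\cdot\det L_J^c,\qquad S:=\sum_{\alpha\in\Lambda_{2,n}}\alpha,
\]
because every permutation in the determinant expansion contributes the same $v$-exponent $m_J-S$ (the bijection property of permutations eliminates the $\pi$-dependence). On the torus, $v^{-S}$ is a unit, so the ideal of maximal minors is generated, up to units, by $\{v^{m_J}\mid J\in S_{A_n}\}$ with nonzero scalar coefficients $\det L_J^c\in\C^{*}$. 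Pulling back through $\varphi$ sends $v^{m_J}$ to the toric monomial $x^{m_J}$, identifying this ideal with $I_n$. Torus-equivariance extends the identification to all of $\mathcal{A}_n$, giving $Nash_n(\mathcal{A}_n)\cong Bl_{I_n}(\mathcal{A}_n)$.

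The main technical step is the combinatorial identity packaging the double expansion --- Newton's binomial in each $(t^{A_n e_i}-v^{A_n e_i})^{\beta_i}$ factor, then multivariate Taylor in $h$ --- into the closed form $v^{A_n\beta-\alpha}(c_\beta)_\alpha$. The vanishing $(c_\beta)_\alpha=0$ when $\alpha\not\leq A_n\beta$ is essential here: it ensures that $v^{m_J-S}$ has nonnegative exponents whenever $\det L_J^c\neq 0$, so that the factorization $v^{m_J-S}\cdot\det L_J^c$ is a genuine equality of polynomials and not merely of Laurent polynomials. A secondary point is the torus-equivariant extension from the parameterization chart to all of $\mathcal{A}_n$, which follows automatically because both the Nash blow-up of order $n$ and the ideal $I_n$ are torus-equivariant by construction.
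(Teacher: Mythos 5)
This proposition is not proved in the paper at all: it is imported verbatim from \cite[Proposition 3.15]{ChDG}, so there is no in-paper proof to compare against. Your argument is correct and is essentially a reconstruction of the cited proof's method --- expand $(\varphi(t)-\varphi(v))^{\beta}$ for the monomial parameterization with exponent matrix $A_n$, observe that the coefficient of $h^{\alpha}$ is $v^{A_n\beta-\alpha}(c_\beta)_\alpha$, factor each maximal minor as $v^{m_J-S}\det L_J^c$, and identify the graph closure of the Grassmannian (Pl\"ucker) map with the blow-up of the resulting monomial ideal, the unit $v^{-S}$ and the nonzero constants $\det L_J^c$ being harmless; the only point stated loosely is the final extension step, which is more cleanly justified by density of the torus in the smooth locus and in $\mathcal{A}_n\setminus V(I_n)$ than by equivariance alone, but this is routine.
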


 Abusing the notation, let $I_{n}=\{m_{J}\in\R^{2}\mid J\in S_{A_{n}}\}$. The set $I_{n}$ defines an order function:

\begin{align*}
    \mbox{ord}_{I_{n}}:\sigma_{n}\rightarrow & \R \\
      v\mapsto & \min_{m_{J}\in I_{n}}\langle v, m_{J}\rangle.
\end{align*}

This function induces the following cones 
{\color{red}(cita GT):}

$$\sigma_{m_{J}}:=\{v\in\sigma_{n}\mid \mbox{ord}_{I_{n}}(v)=\langle v,m_{J}\rangle\}.$$
These cones form a fan $\Sigma(I_{n}):=\bigcup_{m_{J}\in I_{n}}\sigma_{m_{J}}.$ This fan is a refinement of $\sigma$.

\begin{pro}\label{propoblow}
With the previous notation, we have: $$\overline{Nash_{n}(\mathcal{A}_{n})}\cong X_{\Sigma(I_{n})},$$ where $\overline{Nash_{n}(\mathcal{A}_{n})}$ is the normalization of the Nash blow-up of $\mathcal{A}_{n}$ of order $n$ and $X_{\Sigma(I_{n})}$ is the normal variety corresponding to $\Sigma(I_{n})$.
\end{pro}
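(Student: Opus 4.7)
The plan is to reduce the statement to a well-known fact about normalized blowups of affine toric varieties along monomial ideals, and then read off the correct fan from the construction of $\Sigma(I_n)$. By Proposition \ref{propomin} we already know $Nash_n(\mathcal{A}_n)\cong Bl_{I_n}(\mathcal{A}_n)$, so it suffices to identify the normalization of the blowup of $\mathcal{A}_n$ along the monomial ideal $I_n=\langle x^{m_J}\mid J\in S_{A_n}\rangle$. Thus the statement is really a property of the toric variety $\mathcal{A}_n=X_{\sigma_n}$ together with a distinguished monomial ideal, and the proposition can be obtained directly from general toric machinery once we check the setup matches.

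First, I would note that every $m_J$ for $J\in S_{A_n}$ lies in $\sigma_n^{\vee}\cap\Z^2$, so $I_n$ is genuinely a torus-invariant ideal in the semigroup algebra $\C[\sigma_n^{\vee}\cap\Z^2]$. Next, I would invoke the standard description of the normalized blowup of an affine normal toric variety $X_\sigma$ along a monomial ideal $I=\langle x^{m_1},\ldots,x^{m_k}\rangle$: the normalization of $Bl_I(X_\sigma)$ is canonically isomorphic to $X_{\Sigma(I)}$, where $\Sigma(I)$ is the fan refining $\sigma$ whose maximal cones are the domains of linearity of the piecewise linear function $v\mapsto \min_i\langle v,m_i\rangle$ on $\sigma$. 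This is precisely the result that the draft already anticipates via the placeholder \textcolor{red}{(cita GT)} in the discussion preceding the proposition; the natural reference is Goward--Teissier (and it also appears in Kempf--Knudsen--Mumford--Saint-Donat, and in the Cox--Little--Schenck toric textbook).

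Once that general result is in place, identifying its output with $\Sigma(I_n)$ is immediate from the definitions: the function $\mathrm{ord}_{I_n}(v)=\min_{m_J\in I_n}\langle v,m_J\rangle$ is exactly the order function attached to $I_n$, the maximal cones $\sigma_{m_J}=\{v\in\sigma_n\mid \mathrm{ord}_{I_n}(v)=\langle v,m_J\rangle\}$ are the maximal regions of linearity of $\mathrm{ord}_{I_n}$, and their union is the fan $\Sigma(I_n)$. Refinement of $\sigma_n$ follows because $\mathrm{ord}_{I_n}$ is defined on all of $\sigma_n$ and each $\sigma_{m_J}$ is contained in $\sigma_n$. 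Combining this identification with Proposition \ref{propomin} gives the chain of isomorphisms $\overline{Nash_n(\mathcal{A}_n)}\cong \overline{Bl_{I_n}(\mathcal{A}_n)}\cong X_{\Sigma(I_n)}$.

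The main obstacle I foresee is not mathematical depth but rather bookkeeping: one must ensure that the generators $\{x^{m_J}\}_{J\in S_{A_n}}$ really generate $I_n$ up to integral closure (since only the integral closure of $I_n$ influences the normalized blowup), so that no spurious generators change $\Sigma(I_n)$, and that degenerate cases (e.g. $m_J$'s lying in the interior of lower-dimensional faces of the Newton polyhedron) do not contribute extra rays. Both points are handled by the fact that the fan $\Sigma(I_n)$ as defined depends only on the lower boundary of the convex hull of the $m_J+\sigma_n^{\vee}$, so redundant exponents are automatically ignored; this is exactly what makes the general toric statement applicable here without extra hypotheses.
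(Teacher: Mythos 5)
Your proposal is correct and follows essentially the same route as the paper: the paper likewise uses Proposition \ref{propomin} to view $Nash_{n}(\mathcal{A}_{n})$ as a monomial blow-up and then cites the general toric fact (Proposition 5.1 and Remark 4.6 of \cite{GT}) that the normalization of the blowup of an affine toric variety along a monomial ideal is the toric variety of the fan of linearity domains of the order function, which is exactly $\Sigma(I_{n})$. The only slip is bibliographic: the reference behind the ``cita GT'' placeholder is Gonz\'alez P\'erez--Teissier, not Goward--Teissier.
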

\begin{proof}By the previous proposition we have that $Nash_{n}(\mathcal{A}_{n})$ is a monomial blow-up. The result follows from proposition $5.1$ and remark $4.6$ of \cite{GT}.

\end{proof}

The goal of this paper is to prove the following result about the shape of the fan $\Sigma(I_{n})$.
\begin{teo}\label{mainn}
For each $k\in\{1,\ldots,n\}$, there exists $J,J'\in S_{\mathcal{A}_{n}}$ such that $(k,1-k)\in \sigma_{m_{J}}\cap\sigma_{m_{J'}}$. In particular, the rays generated by $(k,1-k)$ appear in the fan $\Sigma(I_{n})$.
\end{teo}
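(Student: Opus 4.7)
The condition $(k,1-k)\in\sigma_{m_J}\cap\sigma_{m_{J'}}$ is the statement that both $m_J$ and $m_{J'}$ achieve the minimum of the linear functional $\langle(k,1-k),\,\cdot\,\rangle$ on $I_n$, and for the corresponding ray to actually appear in $\Sigma(I_n)$ one further needs $m_J\neq m_{J'}$. A direct computation gives $\langle(k,1-k),A_n\beta\rangle = k\beta_1+\beta_2+(n+1-k)\beta_3 =: w_k(\beta)$, so the problem reduces to a weighted subset-selection: for each $k\in\{1,\ldots,n\}$, find two subsets $J,J'\in S_{A_n}$ of size $\lambda_{2,n}$ with the same (minimal) value of $\sum_{\beta\in J}w_k(\beta)$ and with $\sum_{\beta\in J}A_n\beta\neq\sum_{\beta\in J'}A_n\beta$.

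I would produce $J'$ from $J$ by a single weight-preserving swap $J'=(J\setminus\{\alpha\})\cup\{\alpha'\}$ with $w_k(\alpha)=w_k(\alpha')$ and $A_n\alpha\neq A_n\alpha'$. In order that the shift $m_{J'}-m_J=A_n\alpha'-A_n\alpha$ be perpendicular to $(k,1-k)$ (so that both points remain simultaneous minimizers), it must be proportional to $(k-1,k)$. Solving $A_n(\alpha'-\alpha)=(k-1,k)$, the simplest solution is $\alpha'-\alpha=(-1,k,0)$, i.e.\ the swap has the form $(a+1,b,c)\leftrightarrow(a,b+k,c)$ for suitable $a,b,c\in\N$, with the archetype $(a,b,c)=(0,0,0)$ giving $(1,0,0)\leftrightarrow(0,k,0)$. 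The common part $S:=J\cap J'$ of size $\lambda_{2,n}-1$ would then be built greedily from elements of $\Lambda_{3,n}\setminus\{\alpha,\alpha'\}$ with smallest $w_k$-weight; the parameters $(a,b,c)$ are tuned as a function of $k$ and $n$ so that $\{\alpha,\alpha'\}$ sits precisely at the weight threshold and the resulting $J,J'$ attain the overall minimum over $S_{A_n}$.

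Two things then remain to prove. First, $J,J'\in S_{A_n}$, i.e.\ $\det L_J^c\neq 0$ and $\det L_{J'}^c\neq 0$. Here the formula $c_\beta(\alpha)=\sum_{\gamma\leq\beta}(-1)^{|\beta-\gamma|}\binom{\beta}{\gamma}\binom{A_n\gamma}{\alpha}$ exposes a multidimensional finite-difference (Hasse-derivative) structure: each entry is a signed iterated difference of the polynomial $\gamma\mapsto\binom{A_n\gamma}{\alpha}$ evaluated at $\gamma=0$. I would exploit this by choosing compatible total orders on the rows ($\alpha\in\Lambda_{2,n}$) and columns ($\beta\in J$) that reduce $L_J^c$ to a block-triangular shape with an identifiably nonzero diagonal. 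Second, minimality over $S_{A_n}$: any hypothetical $J''\in S_{A_n}$ with strictly smaller $\sum w_k$ must be excluded, most cleanly by showing that such a $J''$ forces a linear dependence among the columns of $L_{J''}^c$, contradicting $J''\in S_{A_n}$. The \emph{main obstacle} is precisely this determinantal analysis: the matrix $L_J^c$ is intricate, and the required nonvanishing and vanishing statements are exactly what the combinatorial toolkit promised in the introduction is designed to handle.
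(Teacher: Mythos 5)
Your opening reduction is sound and does match the skeleton of the paper's argument: the paper also produces two minimizers that differ by an equal-weight exchange whose effect on $m_J$ is proportional to $(k-1,k)$ (concretely, $(n-k+1,0,0)\leftrightarrow(0,0,k)$, both of $w_k$-weight $k(n-k+1)$; see Proposition \ref{finalelem} and Corollary \ref{final1}). But the two items you defer as ``remaining to prove'' are not loose ends — they are essentially the whole content of the theorem, and the proposal supplies no workable mechanism for either. For the nonvanishing $\det L_J^c\neq0$, a greedy smallest-weight selection gives you nothing to work with. The paper instead chooses the candidate set so that its image under $A_n$ is the very particular configuration $T'_{\eta_k}$ of Definitions \ref{Teta}, \ref{traslacion} and \ref{minimum} (diagonal segments anchored on the axes, then translated along the diagonal). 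That structure buys exactly two things: the set is compatible enough with the order $\gamma\leq\beta$ that $L^c_{J_{\eta_k}}$ row-reduces by a unipotent transformation to the matrix $(\overline{A_n\beta})_{\beta\in J_{\eta_k}}$ (Proposition \ref{corresomegasa}), and the vectors $\{\bar v\mid v\in T'_{\eta_k}\}$ can actually be proved linearly independent (Propositions \ref{indepen} and \ref{gene}, resting on the binomial-coefficient lemmas \ref{lemacomb1}--\ref{des.ult} and \ref{lema11}--\ref{coro11}). Nothing comparable is available for an unstructured greedy set, and ``choose compatible total orders making $L_J^c$ block-triangular'' is a hope, not an argument.

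The minimality step has the same problem. Saying that a strictly lower-weight $J''\in S_{A_n}$ ``must force a linear dependence'' is only the contrapositive of what has to be shown. The paper proves it (Proposition \ref{final2}) by expanding $\det L^c_{J''}$ to replace each $\beta_i$ by some $\beta_i'\leq\beta_i$ while keeping $\det(\overline{A_n\beta_i'})\neq0$, and then building a weight-nonincreasing bijection from these $\beta_i'$ onto $T'_{\eta_k}$; this uses the spanning results again together with the ad hoc estimates of Lemmas \ref{lema31}, \ref{lema32}, \ref{lema33} and \ref{propetak}, all of which depend on the specific shape of $T'_{\eta_k}$. Note also that your archetype swap $(1,0,0)\leftrightarrow(0,k,0)$ cannot be the one used: both have weight $k$ and, for $k<n$, both already lie in the minimizer (their images $(1,0)$ and $(k,k)$ are in $T'_{\eta_k}$), so the exchange has to happen at the top weight threshold $k(n-k+1)$ — and locating that threshold presupposes knowing the minimizer, which is again the hard part. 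As it stands, your text is a correct problem reduction plus a plan; the two decisive steps that the paper spends Sections 2 and 3 establishing are missing.
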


\begin{coro}\label{ct}Let $\mathcal{A}_{n}'$ be the minimal resolution of $\mathcal{A}_{n}$ and let $\overline{Nash_{n}(\mathcal{A}_{n})}$ be the normalization of the higher Nash blow-up of $\mathcal{A}_{n}$ of order $n$. Then there exists a proper birational morphism $\phi:\overline{Nash_{n}(\mathcal{A}_{n})}\rightarrow \mathcal{A}_{n}'$ such that the following diagram commutes
$$\xymatrix{ \overline{Nash_{n}(\mathcal{A}_{n})} \ar@{.>}[r]^{\phi}\ar[rd] & \mathcal{A}_{n}'\ar[d]\\
& \mathcal{A}_{n}.}$$
\end{coro}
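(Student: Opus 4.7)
The plan is to identify $\mathcal{A}_{n}'$ explicitly as a toric variety, deduce that $\Sigma(I_{n})$ refines its fan, and then apply the standard correspondence between refinements of fans and proper birational toric morphisms. Since $\mathcal{A}_{n}$ is the cyclic quotient singularity associated with the cone $\sigma_{n} = \R_{\geq 0}\{(0,1), (n+1,-n)\}$, its Hirzebruch--Jung minimal resolution is the toric surface $X_{\Sigma'}$, where $\Sigma'$ is obtained by subdividing $\sigma_{n}$ with exactly the rays generated by $(k, 1-k)$ for $k = 1, \ldots, n$. One checks this by verifying that each pair of consecutive rays in the resulting subdivision spans a unimodular cone and that the exceptional divisors all have self-intersection $-2$; equivalently, this is the standard Hirzebruch--Jung continued-fraction computation $(n+1)/n = [2, 2, \ldots, 2]$ with $n$ twos.

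Next, by Theorem \ref{mainn}, every ray of $\Sigma'$ appears among the rays of $\Sigma(I_{n})$. Both fans have support $\sigma_{n}$, and every maximal cone of $\Sigma'$ is spanned by two consecutive such rays; therefore the presence of all these rays in $\Sigma(I_{n})$ forces each cone of $\Sigma(I_{n})$ to lie inside some cone of $\Sigma'$. Hence $\Sigma(I_{n})$ is a refinement of $\Sigma'$, which in turn refines $\{\sigma_{n}\}$. Standard toric geometry then yields proper birational toric morphisms
\[
X_{\Sigma(I_{n})} \longrightarrow X_{\Sigma'} \longrightarrow X_{\sigma_{n}},
\]
all induced by the identity on the lattice $N = \Z^{2}$. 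Combining the first arrow with the identification of Proposition \ref{propoblow} produces the desired $\phi: \overline{Nash_{n}(\mathcal{A}_{n})} \to \mathcal{A}_{n}'$.

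The commutativity of the diagram is then automatic: both the composition of $\phi$ with the resolution map $\mathcal{A}_{n}' \to \mathcal{A}_{n}$ and the structure morphism $\overline{Nash_{n}(\mathcal{A}_{n})} \to \mathcal{A}_{n}$ are the toric morphism induced by the identity of $N$ from $\Sigma(I_{n})$ to $\{\sigma_{n}\}$, and the uniqueness of this toric morphism gives the result. Since Theorem \ref{mainn} performs all of the geometric work, there is no substantial obstacle in this corollary; the only careful verification is the classical Hirzebruch--Jung description of $\Sigma'$, which is a routine computation.
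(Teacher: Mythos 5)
Your proposal is correct and follows essentially the same route as the paper: the paper's proof likewise invokes the well-known fact that the minimal resolution $\mathcal{A}_{n}'$ is obtained by subdividing $\sigma_{n}$ along the rays $(k,1-k)$, $k=1,\ldots,n$, and then concludes from Theorem \ref{mainn} that $\Sigma(I_{n})$ refines this fan, yielding the factorization by standard toric geometry. You simply spell out the refinement argument and the uniqueness of the induced toric morphisms in more detail than the paper does.
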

\begin{proof}
It is well-known that $\mathcal{A}_{n}'$ is obtained by subdividing $\sigma_n$ with the rays generated by the vectors $(k,1-k)$, for $k\in\{1,\ldots,n\}$. The result follows by Theorem \ref{mainn}.


\end{proof}

\end{section}

\begin{section}{A particular basis for the vector space $\C^{\lambda_{2,n}}$}


As stated in Theorem \ref{mainn}, we need to find some subsets $J\subset\Lambda_{3,n}$ such that the determinant  of $L_{J}^{c}$ is non-zero. This will be achieved by reducing the matrix $L_{J}^{c}$ to another matrix given by vectors formed by certain binomial coefficients. In this section, we prove that those vectors are linearly independent. We will see that this is equivalent to finding some basis of the vector space $\C^{\lambda_{2,n}}$.




\begin{defi}\label{chain}Consider a sequence $\eta=(z,d_{0},d_{1},d_{2},\ldots,d_{r})$, where $z\in\Z_{2}$, $d_{0}=0$, $\{d_{i}\}_{i=1}^{r}\subset\N\setminus\{0\}$ and $\sum_{i=0}^{r}d_{i}=n$. We denote as $\Omega$ the set of all these possible sequences.
\end{defi}

With this set let us define a subset of vectors of $\N^{2}$.

    \begin{defi}\label{Teta} Let $\eta=(z,d_{0},d_{1},\ldots,d_{r})\in\Omega$. We construct a set of vectors $\{v_{j,\eta}\}_{j=1}^{n}\subset\N^{2}$ as follows.
    For each $j\in\{1,\ldots,n\}$, there exists an unique $t\in\{1,\ldots,r\}$ such that $\sum_{i=0}^{t-1}d_{i}<j\leq\sum_{i=0}^{t}d_{i}$. This implies that $j=\sum_{i=0}^{t-1}d_{i}+c$, where $0<c\leq d_{t}$. Then we define
    
    $$v_{j,\eta}= \left\{ \begin{array}{lcc}
             (\sum_{\substack{i\,\,\, \text{odd}\\ i<t}}d_{i}+c,0) &   if  & z=1\,\,\,\text{and}\,\,\,t\,\,\,\text{odd}, \\
             \\ (0,\sum_{\substack{i\,\,\, \text{even}\\ i<t}}d_{i}+c)  &  if &  z=1\,\,\,\text{and}\,\,\,t\,\,\,\text{even},\\
             \\ (0,\sum_{\substack{i\,\,\, \text{odd}\\ i<t}}d_{i}+c) &  if  & z=0\,\,\,\text{and}\,\,\,t\,\,\,\text{odd}, \\
             \\ (\sum_{\substack{i\,\,\, \text{even}\\ i<t}}d_{i}+c,0) & if 
             & z=0\,\,\,\text{and}\,\,\,t\,\,\,\text{even}. \\
             \end{array}
   \right.$$
   In addition, for each $j\in\{1,\ldots, n\}$, we denote $$T_{j,\eta}:=\{v_{j,\eta},v_{j,\eta}+(1,1),\ldots,v_{j,\eta}+(n-j)(1,1)\}.$$
   Furthermore, we denote $v_{0,\eta}:=(1,1)$ and
   $T_{0,\eta}:=\{(1,1),\ldots,(n,n)\}.$ We define $$T_{\eta}:=\bigcup_{j=0}^{n}T_{j,\eta}.$$
   Finally, recalling notation \ref{notacion}, we define 
$$\overline{T_{\eta}}=\{\bar{v}\in\C^{\lambda_{2,n}}\mid v\in T_{\eta}\}.$$
  \end{defi}
  
\begin{rem}Notice that this construction depends only on $\eta$. Moreover, geometrically, this construction is equivalent to taking vectors in an ordered way on the axes of $\N^2$.

\end{rem}    
    
\begin{exam}Let $n=6$, $r=5$ and $\eta=(1,0,1,1,1,1,2)$. For $j=3$ we have that 
$d_0+d_1+d_2<3= d_0+d_1+d_2+d_3$, then $t=3$, $v_{3,\eta}=(2,0)$ and $T_{3,\eta}=\{(2,0),(3,1),(4,2),(5,3)\}.$ $T_{\eta}$ is computed similarly and can be seen in the following figure.
 
\end{exam}    

\begin{figure}[h]\label{fig1}
\caption{Example of $T_{\eta}$ for $\eta=(1,0,1,1,1,1,2)$}
\centering
\includegraphics[scale=.16]{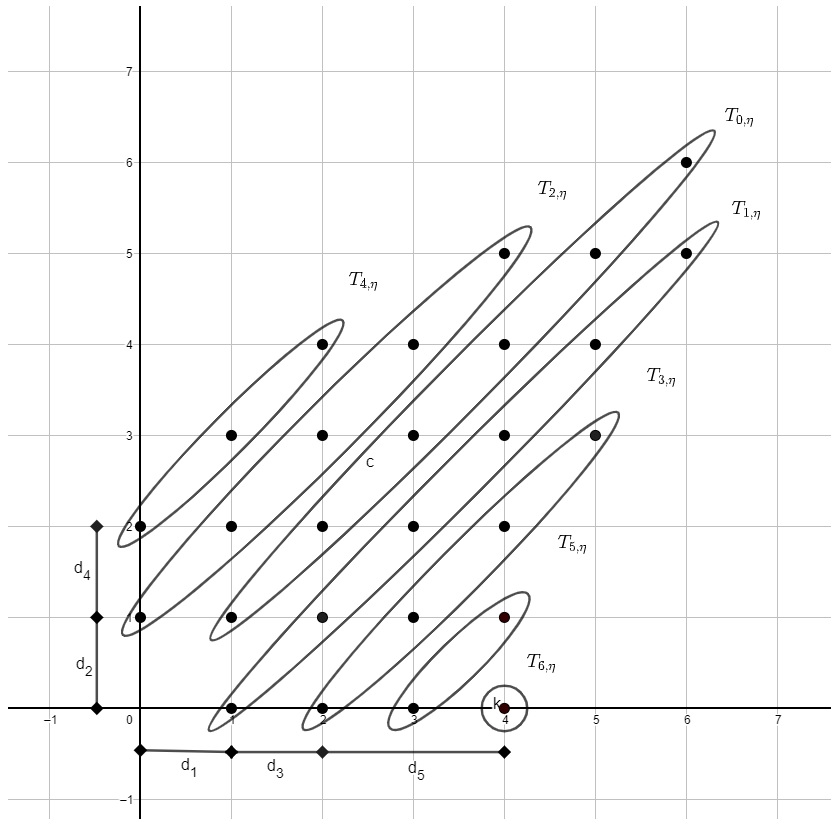}
\end{figure}
Now we give some basic properties of definition \ref{Teta}.


\begin{lem}\label{rem2}
Let $\eta\in\Omega$ and $u,v\in\N^2$. Then we have the following properties:
\begin{itemize}
    \item[1)] If $u\neq v$, then $\bar{u}\neq \bar{v}$.
    \item[2)] $|\overline{T_{\eta}}|=\lambda_{2,n}$.
    \item[3)] If $v_{j,\eta}=(l,0)$ or $v_{j,\eta}=(0,l)$, then $l\leq j$.
    \item[4)] $\pi_{i}(v)\leq n$ for all $v\in T_{\eta}$ and $i\in\{1,2\}$.
    \item[5)]If $v_{j,\eta}=(0,p)$, then for all $q< p$
there exists $l< j$ such that $v_{l,\eta}=(0,q)$. If $v_{j,\eta}=(p,0)$, then for all $q< p$
there exists $l<j$ such that $v_{l,\eta}=(q,0)$. 
    \item[6)]If $v_{j,\eta}=(0,l)$, then $\{v_{i,\eta}\}_{i=1}^{j}=\{(0,t)\}_{t=1}^{l}\cup\{(s,0)\}_{s=1}^{j-l}$. If $v_{j,\eta}=(l,0)$, then $\{v_{i,\eta}\}_{i=1}^{j}=\{(0,t)\}_{t=1}^{j-l}\cup\{(s,0)\}_{s=1}^{l}$.
\end{itemize}
\end{lem}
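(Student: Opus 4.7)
The six properties are largely bookkeeping once one adopts the right picture of the construction: as $j$ runs from $1$ to $n$, the points $v_{j,\eta}$ fill up the positive $x$- and $y$-axes one lattice point at a time, and the parity of $z$ together with the block index $t$ dictates which axis receives each batch of $d_t$ consecutive new points. My plan is to prove the items in the order (1), (3), (4), (5), (6), (2), since the counting statement (2) depends on the others.

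Property (1) is immediate: because $n\ge 1$, both $(1,0)$ and $(0,1)$ lie in $\Lambda_{2,n}$, and the corresponding entries of $\bar v$ are $\pi_1(v)$ and $\pi_2(v)$, so $\bar v$ determines $v$. For (3), if $v_{j,\eta}=(l,0)$ with $j=\sum_{i=0}^{t-1}d_i+c$, then by construction $l$ is a partial sum $\sum_{i<t,\,i\text{ of fixed parity}}d_i+c\le\sum_{i<t}d_i+c=j$; the case $v_{j,\eta}=(0,l)$ is identical. Property (4) follows: on $T_{0,\eta}$ both coordinates lie in $\{1,\ldots,n\}$, while a general element $v_{j,\eta}+c(1,1)$ of $T_{j,\eta}$ with $0\le c\le n-j$ has coordinates bounded by $l+c\le j+(n-j)=n$ thanks to (3).

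For property (5), I would fix $t$ and trace through the block: within any block of index $t$ of the relevant parity, the nonzero coordinate takes the values $\sum_{i<t,\,i\text{ same parity}}d_i+1,\ldots,\sum_{i\le t,\,i\text{ same parity}}d_i$ as $c$ runs from $1$ to $d_t$. Stringing together the successive blocks of the correct parity produces the values $1,2,\ldots,p$ on the given axis in order as $j$ increases, so every $q$ with $1\le q<p$ is realized at some earlier index $l<j$. Property (6) is then a bookkeeping corollary of (5) and its symmetric analogue: among $\{v_{i,\eta}\}_{i=1}^{j}$, the $y$-axis values are exactly $(0,1),\ldots,(0,l)$, and the remaining $j-l$ indices must sit on the $x$-axis, producing $(1,0),\ldots,(j-l,0)$.

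Finally, I would prove (2) by combining (1) with a disjointness-and-counting argument. The set $T_{0,\eta}$ sits on the diagonal $\pi_1=\pi_2$, while each $T_{j,\eta}$ with $j\ge 1$ lies on a parallel line $\pi_1-\pi_2=\pm l\ne 0$ determined by $v_{j,\eta}$. Distinctness of the $v_{j,\eta}$ (a direct consequence of (6), which forces the map $i\mapsto v_{i,\eta}$ to be injective) makes the $T_{j,\eta}$ pairwise disjoint, so $|T_\eta|=n+\sum_{j=1}^{n}(n-j+1)=\tfrac{n(n+3)}{2}=\lambda_{2,n}$, and (1) upgrades this to $|\overline{T_{\eta}}|=\lambda_{2,n}$. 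I do not anticipate a genuine obstacle; the mildly delicate point is (5) at the boundary $c=1$, where one must pass from the current block to the previous block of the same parity and invoke the block structure rather than a naive decrement.
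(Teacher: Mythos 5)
Your proposal is correct and follows essentially the same route as the paper: items (1), (3), (4) are the same direct computations from Definition \ref{Teta}, (5) is the same decomposition of $q$ as a partial sum of the $d_i$'s of the appropriate parity realized at an earlier index, and (6) is the same bookkeeping consequence of (5). The only deviation is that you prove (2) last, via an explicit disjointness-of-diagonals argument based on the injectivity of $i\mapsto v_{i,\eta}$, whereas the paper counts $|T_{0,\eta}|$ and the $|T_{j,\eta}|$ right after (1) and leaves the disjointness of the union implicit; your version is, if anything, slightly more careful on that point.
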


\begin{proof}
\begin{itemize}
    \item[1)]Since $u\neq v$, $\pi_{1}(u)\neq\pi_{1}(v)$ or $\pi_{2}(u)\neq\pi_{2}(v)$. Suppose the first case, the other is analogous. By definition $\bar{u}=\Big(\binom{u}{\alpha}\Big)_{\alpha\in\Lambda_{2,n}}$. Notice that $(1,0)\in\Lambda_{2,n}$. Then $$\bar{u}=\Big(\binom{u}{\alpha}\Big)=\Big(\pi_{1}(u),\ldots\Big)\neq \Big(\pi_{1}(v),\ldots\Big)=\Big(\binom{v}{\alpha}\Big)=\bar{v}.$$
    \item[2)]Notice that for each $j\in\{1,\ldots,n\}$, $|T_{j,\eta}|=n-j+1$ and $|T_{0,\eta}|=n$, this implies $|T_{\eta}|=(n+1)(n+2)/2-1=\lambda_{2,n}$. By the previous item we have that $|\overline{T_{\eta}}|=\lambda_{2,n}$.
    \item[3)] Let $t\leq r$ be such that $j=\sum_{i=0}^{t-1}d_{i}+c$. By definition $l=\sum_{\substack{i\,\,\, \text{odd}\\ i<t}}d_{i}+c$ or $l=\sum_{\substack{i\,\,\, \text{even}\\ i<t}}d_{i}+c$. In any case $l\leq j$.
    \item[4)] Let $v\in T_{\eta}$. If $v\in T_{0,\eta}$, then $v=(p,p)$, with $p\leq n$. If $v\notin T_{0,\eta}$, by definition \ref{Teta}, we have that $v=v_{j,\eta}+p(1,1)$, with $p\leq n-j$. Then $$\pi_{i}(v)=\pi_{i}(v_{j,\eta})+\pi_{i}(p(1,1))=\pi_{i}(v_{j,\eta})+p\leq j+p\leq n.$$
    \item[5)] Let $t\leq r$ be such that $j=\sum_{i=0}^{t-1}d_{i}+c$. Consider the case $v_{j,\eta}=(0,p)$. Suppose that $t$ is odd. By definition \ref{Teta}, $p=\sum_{\substack{i\,\,\, \text{odd}\\ i<t}}d_{i}+c$ and $z=0$. Let $q< p$. Then $q=\sum_{\substack{i\,\,\, \text{odd}\\ i<t'}} d_{i}+c'$, where $t'$ is odd, $t'< t$ and $c'\leq d_{t'}$ or $t'=t$ and $c'< c$. In any case, consider $l=\sum_{i=0}^{t'-1}d_{i}+c'$. Since $t'$ is odd and $z=0$,
    $v_{l,\eta}=(0,\sum_{\substack{i\,\,\, \text{odd}\\ i<t'}}d_{i}+c')=(0,q)$. If $t$ is even, we have that $p=\sum_{\substack{i\,\,\, \text{even}\\ i<t}}d_{i}+c$ and $z=1$. In this case the proof is identical. If $v_{j,\eta}=(p,0)$, the argument is analogous.
    \item[6)] If $v_{j,\eta}=(0,l)$, by the previous point, we have that $\{(0,t)\}_{t=1}^{l}\subset\{v_{i,\eta}\}_{i=1}^{j}$. On the other hand, we have that there exists $\{i_{1},\ldots,i_{j-l}\}$ such that $i_{p}\leq j$ and $v_{i_{p},\eta}\notin\{(0,t)\}_{t=1}^{j}$ for all $p\in\{1,\ldots,j-l\}$. Since $i_{p}<j$ for all $p$ and using the previous point, we obtain that $v_{i_{p},\eta}=(s_{p},0)$ for some $s_{p}\in\N$ and by the previous point $\{v_{i_{p},\eta}\}_{p=1}^{j-l}=\{(s,0)\}_{s=1}^{j-l}$. This implies that $\{v_{i,\eta}\}_{i=1}^{n}=\{(0,t)\}_{t=1}^{l}\cup\{(s,0)\}_{s=1}^{n-l}$.
\end{itemize}
\end{proof}

  

\begin{subsection}{Linear independence of $\overline{T_{\eta}}$}

By $2)$ of lemma \ref{rem2} we know that the cardinality of $\overline{T_{\eta}}$ is $\lambda_{2,n}$. In order to prove that it is a basis of $\C^{\lambda_{2,n}}$ we only have to see that it is linearly independent. For that we need some preliminary lemmas.
  
 
  \begin{lem}\label{lemacomb1}
  Let $0<c_{0}<c_{1}<\cdots<c_{l}$ be natural numbers. Then $\det\Big(\binom{c_{i}}{j}\Big)_{\substack{0\leq i\leq l\\ 0\leq j\leq l}}\neq 0$. In particular, the set of vectors 
   $\{\Big(\binom{c_{i}}{j}\Big)_{0\leq j\leq l}\in\C^{l+1}\mid 0\leq i\leq l\}$ is  linearly independent.
  \end{lem}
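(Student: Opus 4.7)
The plan is to recognize this determinant as an evaluation matrix for a convenient basis of polynomials. Observe that $p_j(x):=\binom{x}{j}$ is a polynomial in $x$ of degree exactly $j$ with leading coefficient $1/j!$, so $\{p_0,p_1,\ldots,p_l\}$ is a basis of the space $\C[x]_{\le l}$ of polynomials of degree at most $l$. The $(i,j)$-entry of the matrix in question is precisely $p_j(c_i)$, so the matrix represents the evaluation map
$\mathrm{ev}\colon\C[x]_{\le l}\to\C^{l+1}$, $p\mapsto(p(c_0),\ldots,p(c_l))$,
with respect to the basis $\{p_j\}$ on the source and the standard basis on the target. Since the $c_i$ are pairwise distinct, a nonzero polynomial of degree $\le l$ vanishes at none or at most $l$ of them, hence $\mathrm{ev}$ is an isomorphism and the matrix is non-singular.

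For an explicit determinant formula, I would expand each $p_j$ in the monomial basis: $p_j(x)=\sum_{k=0}^{j}u_{kj}\,x^k$, with $u_{jj}=1/j!$ and $u_{kj}=0$ for $k>j$. Writing $V=(c_i^k)_{0\le i,k\le l}$ for the Vandermonde matrix and $U=(u_{kj})_{0\le k,j\le l}$ for the upper-triangular matrix just described, the identity $\binom{c_i}{j}=\sum_k V_{ik}U_{kj}$ gives the factorization $\bigl(\binom{c_i}{j}\bigr)=V\cdot U$, and hence
$$\det\Bigl(\binom{c_i}{j}\Bigr)_{0\le i,j\le l}=\det V\cdot\det U=\prod_{0\le i<j\le l}(c_j-c_i)\cdot\prod_{j=0}^{l}\frac{1}{j!}\neq 0,$$
since the $c_i$ are distinct. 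Linear independence of the vectors $\bigl(\binom{c_i}{j}\bigr)_{0\le j\le l}\in\C^{l+1}$ is immediate from the non-vanishing of this determinant.

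There is no substantive obstacle here: the lemma is a classical consequence of polynomial interpolation, stated separately because it will be invoked repeatedly, for specific choices of indices $c_0<c_1<\cdots<c_l$ coming from the sets $T_\eta$, in the linear-independence proof for $\overline{T_\eta}$ that follows.
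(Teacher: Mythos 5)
Your proof is correct and follows essentially the same route as the paper: both rest on the observation that $\binom{x}{j}$ is a polynomial of degree exactly $j$, so a vanishing linear combination would give a nonzero polynomial of degree at most $l$ with the $l+1$ distinct roots $c_0,\ldots,c_l$. Your added Vandermonde factorization $\bigl(\binom{c_i}{j}\bigr)=V\cdot U$ is a pleasant extra that even yields the explicit value $\prod_{i<j}(c_j-c_i)\prod_{j}\frac{1}{j!}$ of the determinant, but it is not needed for the lemma and does not change the substance of the argument.
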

  \begin{proof}For each $j\leq l$, consider the polynomial $b_{j}(x)=\frac{x(x-1)\cdots(x-j+1)}{j!}$ and $b_{0}=1$. 
  Notice that for $x\in\N$, we have $b_{j}(x)=\binom{x}{j}$ and $\deg b_{j}(x)=j$ for all $j\in\{0,\ldots,l\}$. Thus,
  $$\Big(\binom{c_{i}}{j}\Big)_{\substack{0\leq i\leq l\\ 0\leq j\leq l}}=\Big(b_{j}(c_{i})\Big)_{\substack{0\leq i\leq l\\ 0\leq j\leq l}}.$$
  We show that the columns of this matrix are linearly independent. Let $\alpha_{0},\ldots,\alpha_{l}\in\C$ be such that $\sum_{j=0}^{l}\alpha_{j}b_{j}(c_{i})=0$ for each  $i\in\{0,\ldots,l\}$. Consider  $f(x)=\sum_{j=0}^{l}\alpha_{j}b_{j}(x)$. Then  $\{c_{0},\ldots,c_{l}\}$ are roots of $f(x)$. Since  $\deg f(x)\leq l$, we obtain that $f(x)=0$. Since $\deg b_{j}(x)=j$, we conclude $\alpha_{j}=0$ for all $j$.
  \end{proof}
  
  

  As we mentioned before, the goal is to prove that given $\eta\in\Omega$, the set of vectors $\overline{T_{\eta}}$ is linearly independent on $\C^{\lambda_{2,n}}$. Consider  \begin{equation}\label{equation5}
      \sum_{\bar{v}\in\overline{T_{\eta}}}a_{\bar{v}}\bar{v}=\bar{0}\in\C^{\lambda_{2,n}}.
      \end{equation}
   Fix this notation for the next results.
   

  \begin{lem}\label{lemacomb}
  Let $l,m,n\in\N$ be such that $1\leq l\leq n$ and $m\leq n-l+1$. Let $\eta\in\Omega$. Suppose that $E=\{(c_{1},l),\ldots,(c_{m},l)\}$ (resp. $\{(l,c_{1}),\ldots,(l,c_{m})\}$) is contained in $T_{\eta}$, for some $0<c_{1}<\cdots<c_{m}$. Moreover, suppose that for each $u\in T_{\eta}\setminus E$ such that $\pi_{2}(u)\geq l$ (resp. $\pi_{1}(u)\geq l$), we have that $a_{\bar{u}}=0$. Then for all $v\in E$ we obtain that $a_{\bar{v}}=0$.
  
  
  \end{lem}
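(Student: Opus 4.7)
I would prove the first case of the lemma (the vectors of the form $(c_i,l)$); the other case follows by swapping the two coordinates throughout. The plan is to evaluate the relation \eqref{equation5} coordinate-by-coordinate at carefully chosen indices $\alpha \in \Lambda_{2,n}$ so that only the contributions coming from $E$ survive, and then invoke Lemma \ref{lemacomb1}.

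\medskip

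For each $\alpha_1 \in \{0,1,\ldots,m-1\}$, set $\alpha = (\alpha_1, l)$. Since $l \geq 1$ we have $|\alpha| \geq 1$, and since $m \leq n-l+1$ we have $\alpha_1 + l \leq (m-1) + l \leq n$, so $\alpha \in \Lambda_{2,n}$. Projecting \eqref{equation5} to its $\alpha$-coordinate yields
$$\sum_{v\in T_{\eta}} a_{\bar v}\,\binom{\pi_1(v)}{\alpha_1}\binom{\pi_2(v)}{l} \;=\; 0.$$
I would then argue that the only surviving summands are those with $v \in E$. Indeed, if $\pi_2(v) < l$, then $\binom{\pi_2(v)}{l}=0$; and if $v \in T_\eta \setminus E$ with $\pi_2(v) \geq l$, then $a_{\bar v}=0$ by the standing hypothesis of the lemma. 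For $v=(c_i, l) \in E$, the factor $\binom{l}{l}=1$ simplifies the contribution to $a_{\overline{(c_i,l)}}\binom{c_i}{\alpha_1}$.

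\medskip

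Thus, as $\alpha_1$ ranges over $\{0,1,\ldots,m-1\}$, I obtain the $m \times m$ linear system
$$\sum_{i=1}^{m} a_{\overline{(c_i,l)}}\,\binom{c_i}{\alpha_1} \;=\; 0, \qquad \alpha_1 = 0, 1, \ldots, m-1.$$
Because $0 < c_1 < c_2 < \cdots < c_m$, Lemma \ref{lemacomb1} (applied with $l$ there equal to $m-1$ and with the integers $c_1,\ldots,c_m$) guarantees that the coefficient matrix $\bigl(\binom{c_i}{\alpha_1}\bigr)_{1\leq i\leq m,\,0\leq \alpha_1\leq m-1}$ is nonsingular. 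Hence $a_{\overline{(c_i,l)}}=0$ for every $i$, which is the desired conclusion.

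\medskip

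No single step is difficult here; the only thing to watch is the admissibility check $\alpha=(\alpha_1,l)\in\Lambda_{2,n}$, which is precisely where the hypotheses $l\geq 1$ and $m\leq n-l+1$ are used. The symmetric case $E=\{(l,c_1),\ldots,(l,c_m)\}$ is handled identically by taking instead $\alpha=(l,\alpha_1)$ for $\alpha_1=0,1,\ldots,m-1$ and using the hypothesis $a_{\bar u}=0$ for $u\in T_\eta\setminus E$ with $\pi_1(u)\geq l$.
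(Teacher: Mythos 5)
Your proof is correct and follows essentially the same route as the paper: project the relation \eqref{equation5} onto coordinates of the form $(\,\cdot\,,l)$ (resp.\ $(l,\,\cdot\,)$), note that the terms outside $E$ vanish either because $\binom{\pi_2(u)}{l}=0$ or by the hypothesis $a_{\bar u}=0$, and conclude via Lemma \ref{lemacomb1}. The only cosmetic difference is that you keep just the square subsystem $\alpha_1\in\{0,\ldots,m-1\}$, whereas the paper uses all of $\{(j,l)\mid 0\le j\le n-l\}$; both reduce to the same nonsingularity statement.
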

  
  \begin{proof} Consider the set of vectors $D=\{(0,l),(1,l),\ldots,(n-l,l)\}\subset\Lambda_{2,n}$ (resp. $\{(l,0),(l,1),\ldots,(l,n-l)\}$). Let $u\in T_{\eta}\setminus E$. If $\pi_{2}(u)<l$ (resp. $\pi_{1}(u)<l$), then $\binom{u}{\alpha}=0$ for all $\alpha\in D$. If $\pi_{2}(u)\geq l$ (resp. $\pi_{1}(u)\geq l$), by hypothesis $a_{\bar{u}}=0$. Consider $\pi_{\alpha}:\C^{\lambda_{2,n}}\rightarrow\C$  the projection on the $\alpha$-th coordinate.
  Therefore, $\pi_{\alpha}(a_{\bar{u}}\bar{u})=0$ for all $u\in T_{\eta}\setminus E$ and $\alpha\in D$. This implies $$\sum_{v\in E}\pi_{\alpha}(a_{\bar{v}}\bar{v})=\sum_{\bar{v}\in\overline{T_{\eta}}}\pi_{\alpha}(a_{\bar{v}}\bar{v})=0,$$
 for all $\alpha\in D$. 
 
Since $\alpha=(j,l)$ (resp. $(l,j)$) with $0\leq j\leq n-l$ and $v=(c_{i},l)$ (resp. $(l,c_{i}$)), with $1\leq i\leq m$, we obtain that $\pi_{\alpha}(\bar{v})=\binom{c_{i}}{j}$. Thus $$\sum_{i=1}^{m}a_{\bar{v}}\binom{c_{i}}{j}=\sum_{v\in E}\pi_{\alpha}(a_{\bar{v}}\bar{v})=0,$$
 for all $0\leq j\leq n-l$. By lemma \ref{lemacomb1}, we obtain that $a_{\bar{v}}=0$ for all $v\in E$.

  \end{proof}
  
 \begin{lem}\label{des.ult}
 Let $\eta=(z,d_0,d_{1},\ldots,d_{r})\in\Omega$ and $1\leq l< j\leq n$. 
 \begin{itemize}
     \item
 If  $v_{l,\eta}=(p_{l},0)$  and $v_{j,\eta}=(p_{j},0)$, then $$\pi_{1}(v_{j,\eta}+(n-j)(1,1))\leq\pi_{1}(v_{l,\eta}+(n-l)(1,1)).$$
 The equality holds if and only if there exists $1\leq t\leq r$ such that $\sum_{i=0}^{t-1}d_{i}<l<j\leq\sum_{i=0}^{t}d_{i}$.
 
 \item If $v_{l,\eta}=(0,p_{l})$  and $v_{j,\eta}=(0,p_{j})$, then $$\pi_{2}(v_{j,\eta}+(n-j)(1,1))\leq\pi_{2}(v_{l,\eta}+(n-l)(1,1)).$$
 The equality holds if and only if there exists $1\leq t\leq r$ such that $\sum_{i=0}^{t-1}d_{i}<l<j\leq\sum_{i=0}^{t}d_{i}$.
 \end{itemize}
 \end{lem}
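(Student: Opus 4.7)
The plan is to reduce each inequality to an elementary counting statement via part (6) of Lemma \ref{rem2}, and then analyse the equality case using the block structure that $\eta$ imposes on $\{1,\ldots,n\}$.

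For the first bullet, since $\pi_{1}(v_{j,\eta}+(n-j)(1,1))=p_{j}+(n-j)$ and $\pi_{1}(v_{l,\eta}+(n-l)(1,1))=p_{l}+(n-l)$, the inequality is equivalent to $p_{j}-p_{l}\leq j-l$. By part (6) of Lemma \ref{rem2}, the hypothesis $v_{j,\eta}=(p_{j},0)$ forces $\{v_{i,\eta}\}_{i=1}^{j}$ to contain exactly $p_{j}$ vectors on the $x$-axis (namely $(1,0),\ldots,(p_{j},0)$) and $j-p_{j}$ vectors on the $y$-axis; the analogous statement holds with $l$ in place of $j$. Consequently $p_{j}-p_{l}$ is exactly the number of indices $i\in\{l+1,\ldots,j\}$ for which $v_{i,\eta}$ lies on the $x$-axis, and this count is at most $j-l$.

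For the equality case I would unpack Definition \ref{Teta}: whether $v_{i,\eta}$ lies on the $x$- or $y$-axis is determined by the parity of the unique block index $t_{i}$ (characterised by $\sum_{k=0}^{t_{i}-1}d_{k}<i\leq\sum_{k=0}^{t_{i}}d_{k}$) relative to $z$, so all indices within a single block give vectors on a common axis and consecutive blocks alternate axes. Hence $p_{j}-p_{l}=j-l$ iff every $v_{i,\eta}$ with $l<i\leq j$ lies on the $x$-axis, which happens iff $l+1,\ldots,j$ all sit in a single block $t$. Since $v_{l,\eta}$ and $v_{l+1,\eta}$ are both on the $x$-axis and consecutive blocks alternate axes, $l$ must also lie in block $t$, yielding the stated condition $\sum_{i=0}^{t-1}d_{i}<l<j\leq\sum_{i=0}^{t}d_{i}$. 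The second bullet follows by the same argument with the two coordinates swapped.

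The main obstacle is essentially bookkeeping: translating cleanly between the case-split formula for $v_{j,\eta}$ in Definition \ref{Teta} and the intuitive picture of $\eta$ partitioning $\{1,\ldots,n\}$ into alternating $x$-axis and $y$-axis blocks. Once this correspondence is set up, the inequality is a one-line count and the equality characterisation is immediate.
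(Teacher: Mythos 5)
Your argument is correct, and it takes a somewhat different route from the paper. The paper proves the inequality by brute computation with the formulas of Definition \ref{Teta}: writing $l=\sum_{i=0}^{t-1}d_{i}+c_{t}$, $j=\sum_{i=0}^{t'-1}d_{i}+c_{t'}$, it observes that $p_{j}+(n-j)=n-\sum_{\text{even }i<t'}d_{i}$, so the inequality is just monotonicity of the even-indexed partial sums in $t'$, and equality is read off as $t=t'$ (which, since $v_{l,\eta}$ and $v_{j,\eta}$ lie on the same axis, is exactly the stated block condition). You instead reduce the inequality to $p_{j}-p_{l}\leq j-l$ and obtain it from part $6)$ of Lemma \ref{rem2}, interpreting $p_{j}-p_{l}$ as the number of indices in $\{l+1,\ldots,j\}$ whose vector lies on the $x$-axis; the equality case is then handled by the same block/alternation structure the paper uses, including the small but necessary step that $v_{l,\eta}$ and $v_{l+1,\eta}$ being on the same axis forces $l$ into the same block. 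Both proofs rest on the same combinatorial content of Definition \ref{Teta}; the paper's computation is shorter once the formulas are written out and identifies the equality case instantly as $t=t'$, while your counting reformulation avoids manipulating the partial sums explicitly and makes the inequality transparent, at the cost of routing through Lemma \ref{rem2}$\,6)$ and a slightly longer equality discussion. Your closing remark that the second bullet follows by swapping coordinates matches the paper's ``the other cases are analogous.''
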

  \begin{proof}
  Suppose that $z=1$. By definition \ref{Teta} and the fact $l<j$, $p_{l}=\sum_{\substack{i\,\,\, \text{odd}\\ i<t}}d_{i}+c_{t}$ and $p_{j}=\sum_{\substack{i\,\,\, \text{odd}\\ i<t'}}d_{i}+c_{t'}$, for some odd numbers $t\leq t'\leq r$, where $c_{t}\leq d_{t}$ and $c_{t'}\leq d_{t'}$. Moreover, by definition $l=\sum_{i=0}^{t-1}d_{i}+c_{t}$ and $j=\sum_{i=0}^{t'-1}d_{i}+c_{t'}$. Then 
  \begin{align*}
\pi_{1}(v_{j,\eta}+(n-j)(1,1))&=p_{j}+(n-j)\\
&=n-\sum_{\substack{i\,\,\, \text{even}\\ i<t'}}d_{i}\\
&\leq n-\sum_{\substack{i\,\,\, \text{even}\\ i<t}}d_{i}\\ 
&=p_{l}+(n-l)\\
&=\pi_{1}(v_{l,\eta}+(n-l)(1,1)).
  \end{align*}
  Notice that the equality holds if and only if $t'=t$. For the other three cases ($z=1$, $v_{l,\eta}=(0,p_{l})$, $v_{j,\eta}=(0,p_{j})$; $z=0$, $v_{l,\eta}=(p_{l},0)$, $v_{j,\eta}=(p_{j},0)$; $z=0$, $v_{l,\eta}=(0,p_{l})$, $v_{j,\eta}=(0,p_{j})$) the proof is analogous. 
  \end{proof}
  
  Now we are ready to prove the first important result of the section.
  
  \begin{pro}\label{indepen}
  Let $\eta\in\Omega$. Then $\overline{T_{\eta}}$ is linearly independent.
  \end{pro}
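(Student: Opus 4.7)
The plan is to translate the linear independence of $\overline{T_\eta}$ into a classical polynomial interpolation statement and then exploit the parallel-lines structure of $T_\eta$. Non-singularity of the matrix $L = \bigl(\binom{v}{\alpha}\bigr)_{\alpha \in \Lambda_{2,n},\, v \in T_\eta}$ is equivalent to the assertion that no nonzero polynomial $p \in \C[x, y]$ of total degree at most $n$ with $p(0, 0) = 0$ vanishes on all of $T_\eta$, because the polynomials $f_\alpha(x, y) = \binom{x}{\pi_1(\alpha)}\binom{y}{\pi_2(\alpha)}$ for $\alpha \in \Lambda_{2, n}$ form a basis of that space. Equivalently, one has to show that the $\binom{n+2}{2}$ points of $T_\eta \cup \{(0, 0)\}$ impose independent linear conditions on polynomials of degree $\leq n$.

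I would then extract the geometric picture from parts (5)--(6) of Lemma \ref{rem2}: the starting points $v_{1, \eta}, \ldots, v_{n, \eta}$ together fill out $\{(1, 0), \ldots, (K, 0)\} \cup \{(0, 1), \ldots, (0, L)\}$ for some $K + L = n$. Consequently, the $n + 1$ rays $T_{0, \eta}, T_{1, \eta}, \ldots, T_{n, \eta}$ lie on $n + 1$ distinct parallel lines of slope one. The diagonal $\ell_0 = \{y = x\}$ contains the $n + 1$ points of $T_{0, \eta} \cup \{(0, 0)\}$; for each $j \geq 1$, the line $\ell_j = \{y - x = u_j\}$ through $v_{j, \eta}$ contains the $n + 1 - j$ points of $T_{j, \eta}$, where $u_j = -\pi_1(v_{j, \eta})$ when $v_{j, \eta}$ lies on the $x$-axis and $u_j = \pi_2(v_{j, \eta})$ otherwise. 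The $n + 1$ intercepts $u_0 = 0, u_1, \ldots, u_n$ are pairwise distinct, and together these lines account for exactly $\binom{n + 2}{2}$ points.

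I would then prove by induction on $j = 0, 1, \ldots, n$ that any $p \in \C[x, y]$ of degree at most $n$ vanishing on $T_\eta \cup \{(0, 0)\}$ is divisible by $\prod_{i = 0}^{j}(y - x - u_i)$. For the base case, $p|_{\ell_0}$ is a univariate polynomial of degree $\leq n$ with the $n + 1$ distinct roots $0, 1, \ldots, n$, hence identically zero, so $y - x$ divides $p$. For the inductive step, writing $p = \prod_{i < j}(y - x - u_i) \cdot q$ with $\deg q \leq n - j$, each earlier factor restricts on $\ell_j$ to the nonzero constant $u_j - u_i$, so $q$ vanishes at the $n + 1 - j$ points of $T_{j, \eta}$; since $\deg q|_{\ell_j} \leq n - j < n + 1 - j$, this forces $q|_{\ell_j} \equiv 0$ and hence $(y - x - u_j) \mid q$. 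After $n + 1$ iterations $p$ would have to be divisible by a polynomial of degree $n + 1$, which together with $\deg p \leq n$ forces $p \equiv 0$, completing the equivalent unisolvence statement.

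The main obstacle is the geometric verification in the second paragraph — specifically the distinctness of the intercepts $u_j$ and the exact count of $T_\eta$-points on each line. Both facts reduce to parts (5)--(6) of Lemma \ref{rem2} together with the disjointness of the rays $T_{j, \eta}$, so after unpacking the combinatorics the interpolation argument is classical and short. Conceptually, dividing by the linear form $y - x - u_j$ in step $j$ is exactly the operation that Lemma \ref{lemacomb} implements at the matrix level, and Lemma \ref{lemacomb1} is the univariate root-counting statement underlying the restriction of $p$ to each line.
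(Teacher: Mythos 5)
Your proposal is correct, but it takes a genuinely different route from the paper. You dualize the statement: linear independence of $\overline{T_{\eta}}$ is the nonsingularity of the square matrix $\bigl(\binom{v}{\alpha}\bigr)_{v\in T_{\eta},\,\alpha\in\Lambda_{2,n}}$, and since the polynomials $\binom{x}{\pi_{1}(\alpha)}\binom{y}{\pi_{2}(\alpha)}$, $\alpha\in\Lambda_{2,n}$, form a (triangular) basis of the degree-$\leq n$ polynomials vanishing at the origin, this is exactly unisolvence of the $\binom{n+2}{2}$ points $T_{\eta}\cup\{(0,0)\}$. You then use that $T_{0,\eta}\cup\{(0,0)\},T_{1,\eta},\ldots,T_{n,\eta}$ lie on $n+1$ distinct lines of slope one carrying $n+1,n,\ldots,1$ points — distinctness of the intercepts being precisely Lemma \ref{rem2}(6), which says the starting points fill an initial segment of each axis — and run the classical divide-out-lines induction: on the $j$-th line the quotient restricts to a univariate polynomial of degree $\leq n-j$ with $n+1-j$ distinct roots, so the corresponding linear form divides, and after $n+1$ steps the degree bound forces $p\equiv 0$. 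The paper instead manipulates the relation (\ref{equation5}) directly, eliminating coefficients along horizontal and vertical lines of the grid via Lemma \ref{lemacomb} (whose engine is the binomial Vandermonde Lemma \ref{lemacomb1}), in an order dictated by the segments of $\eta$; this needs the auxiliary inequality Lemma \ref{des.ult} and a four-fold case analysis on $z$ and parity. Your route replaces Lemma \ref{lemacomb1} by univariate root counting and the projection bookkeeping by divisibility by linear forms, yielding a shorter argument that is essentially independent of the fine structure of $\eta$ (only Lemma \ref{rem2}(6) enters); the paper's version stays at the level of the explicit coefficient elimination, which is closer in spirit to the row-reduction used later for $L_{J}^{c}$, but that machinery is not reused elsewhere for this proposition. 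Your closing observation is apt: dividing by $y-x-u_{j}$ is the interpolation-theoretic counterpart of one application of Lemma \ref{lemacomb}.
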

  
  \begin{proof} Let $\eta=(z,d_{0},d_{1},\ldots,d_{r})$ and suppose that $z=1$. Define the numbers $$d_{+,r}=\sum_{\substack{i\leq r \\ i\,\,\, \text{odd}}}d_{i}, \,\,\,\,\,\,\,\,\,\,\,\, d_{-,r}=\sum_{\substack{i\leq r \\ i\,\,\, \text{even}}}d_{i}.$$
 Notice that by definition \ref{chain}, we have that $n=d_{+,r}+d_{-,r}$. We claim that for all $v\in T_{\eta}$ such that $\pi_{2}(v)>n-d_{+,r}$ or $\pi_{1}(v)>n-d_{-,r}$, we obtain that $a_{\bar{v}}$=0 in (\ref{equation5}). Assume this claim for the moment. For each $0\leq s\leq d_{+,r}$, define the set $E_{s}=\{v\in T_{\eta}\mid \pi_{1}(v)=s\,\,\,\mbox{and}\,\,\,\pi_{2}(v)\leq d_{-,r}\}$. Notice that $$|E_{s}|\leq d_{-,r}+1=n-d_{+,r}+1\leq n-s+1.$$ 
 Using the claim and taking $s=d_{+,r}$ we obtain the conditions of lemma \ref{lemacomb}. Thus $a_{\bar{v}}=0$ for all $v\in E_{d_{+,r}}$. Now we can repeat the same argument for $s=d_{+,r}-1$.
 Applying this process in a decreasing way for each $s\in \{0,\ldots,d_{r,+}\}$ we obtain that $a_{\bar{v}}=0$ for all $v\in\cup_{s=0}^{d_{+,r}}E_{s}$. Then for $v\in T_{\eta}$, we have three possibilities: $v\in\cup_{s=0}^{d_{+,r}}E_{s}$, $\pi_{1}(v)>d_{+,r}$, or $\pi_{2}(v)>d_{-,r}$. In any case, we obtain that $a_{\bar{v}}=0$ by the previous argument or the claim. This implies that  $\overline{T_{\eta}}$ is linearly independent.
 
 Now we proceed to prove the claim. For each $1\leq l\leq r$, define $$d_{+,l}=\sum_{\substack{i\leq l \\ i\,\,\, \text{odd}}}d_{i}, \,\,\,\,\,\,\,\,\,\,\,\, d_{-,l}=\sum_{\substack{i\leq l \\ i\,\,\, \text{even}}}d_{i}.$$
 We prove the claim by induction on $l$.
 By definition, we have that $d_{+,1}=d_{1}$ and $d_{-,1}=0$. Therefore we only have to prove that if $\pi_{2}(v)>n-d_{1}$, then $a_{\bar{v}}=0$. We claim that for all $v\in T_{\eta}$ such that $\pi_{2}(v)>n-d_{1}$, we have that $\pi_{1}(v)\geq\pi_{2}(v)$. We proceed to prove this claim by contrapositive. Let $v\in T_{\eta}$ be such that $\pi_{2}(v)>\pi_{1}(v)$. This implies that $v=(0,\pi_{2}(v)-\pi_{1}(v))+\pi_{1}(v)(1,1)=v_{j,\eta}+\pi_{1}(v)(1,1)$ for some $j\leq n$, where $\pi_{1}(v)\leq n-j$ by definition \ref{Teta}. By $5)$ of lemma \ref{rem2}, there exist $i<j$ such that $v_{i,\eta}=(0,1)$. Moreover, by definition \ref{Teta}, $i=d_{1}+1$.  By lemma \ref{des.ult}, we obtain
 \begin{align*}\pi_{2}(v)&=\pi_{2}(v_{j,\eta}+\pi_{1}(v)(1,1))\\
 &\leq \pi_{2}(v_{j,\eta}+(n-j)(1,1))\\
 & \leq \pi_{2}(v_{d_{1}+1,\eta}+(n-d_{1}-1)(1,1))\\
 &= n-d_{1},
 \end{align*}
as we claim. For each $s\in\{n-d_{1}+1,\ldots,n\}$, we define the set $E(s)=\{v\in T_{\eta}|\pi_{2}(v)=s\}$. By $4)$ of lemma \ref{rem2} and the previous claim, we have that for each $s\in\{n-d_{1}+1,\ldots,n\}$ we have $|E(s)|\leq n-s+1$. Now we are in the conditions of lemma \ref{lemacomb}. Applying the lemma for each $s$ in a descendant way, we obtain the result.
 
 Now suppose that the claim is true for $l$, i.e., for all $v\in T_{\eta}$ such that $\pi_{2}(v)>n-d_{+,l}$ or $\pi_{1}(v)>n-d_{-,l}$ for some $l\geq1$, we have that $a_{\bar{v}}=0$ and we prove for $l+1$. We have two cases: $l$ odd or $l$ even. We prove the case $l$ odd, the other case is analogous. Since $l$ is odd, we obtain that $d_{+,l}=d_{+,l+1}$ and $d_{-,l}+d_{l+1}=d_{-,l+1}$. Then, by the induction hypothesis, we only need to check that for all $v\in T_{\eta}$ such that $n-d_{-,l+1}<\pi_{1}(v)\leq n-d_{-,l}$ and $\pi_{2}(v)\leq n-d_{+,l}$, we have $a_{\bar{v}}=0$. For this, we are going to apply lemma \ref{des.ult} in an iterative way. By definition, $v_{\sum_{i=0}^{l}d_{i},\eta}=(d_{+,l},0)$. We claim that for all $v\in T_{\eta}$ such that $\pi_{1}(v)\geq n-d_{-,l+1}+1$, we have that $\pi_{2}(v)>\pi_{1}(v)-d_{+,l}-1$. We proceed to prove this claim by contrapositive. Let $v\in T_{\eta}$ be such that $\pi_{2}(v)\leq\pi_{1}(v)-d_{+,l}-1$. This implies that $v=(\pi_{1}(v)-\pi_{2}(v),0)+\pi_{2}(v)(1,1)=v_{j,\eta}+\pi_{2}(v)(1,1),$ where $\pi_{2}(v)\leq n-j$ by definition \ref{Teta}. Since $\pi_{1}(v)-\pi_{2}(v)\geq d_{+,l}+1$, by $5)$ of lemma \ref{rem2}, there exist $i<j$ such that $v_{i,\eta}=(d_{+,l}+1,0)$. Moreover, by definition \ref{Teta}, $i=\sum_{i=0}^{l+1}d_{i}+1$.  By lemma \ref{des.ult}, we obtain
 \begin{align*}
     \pi_{1}(v)& =\pi_{1}(v_{j,\eta}+\pi_{2}(v)(1,1))\\
     & \leq \pi_{1}(v_{j,\eta}+(n-j)(1,1))\\
     &\leq \pi_{1}(v_{\sum_{i=0}^{l+1}d_{i}+1,\eta}-(n-\sum_{i=0}^{l+1}d_{i}-1)(1,1))\\
     &= d_{+,l}+1+n-\sum_{i=0}^{l+1}d_{i}-1\\
     &<n-d_{-,l+1}+1
 \end{align*}
 as we claim. For each $s\in\{n-d_{-,l+1}+1,\ldots,n-d_{-,l}\}$, we define the set $E(s)=\{v\in T_{\eta}|\pi_{1}(v)=s\,\,\,\mbox{and}\,\,\,\pi_{2}(v)\leq n-d_{+,l}\}$. Notice that, by the previous claim, we have that for each $s\in\{n-d_{-,l+1}+1,\ldots,n-d_{-,l}\}$, $|E(s)|\leq (n-d_{+,l})-(s-d_{+,l}-1)=n-s+1$. By the induction hypothesis we are in the conditions of lemma \ref{lemacomb} for $s=n-d_{-,l}$. Applying the lemma for each $s$ in a descendant way, we obtain the result.
 
 In the case $z=0$ the claim becomes: for each $v\in T_{\eta}$ such that $\pi_{2}(v)>n-d_{-,r}$ or $\pi_{1}(v)>n-d_{+,r}$ then $a_{\bar{v}}=0$. The proof of this case is analogous.

  \end{proof}
  
  
   \end{subsection}
  
  \begin{subsection}{Moving $T_{j,\eta}$ along a diagonal preserves linear independence}

 Proposition \ref{indepen} shows that $\overline{T_{\eta}}$ is a basis of $\C^{\lambda_{2,n}}$ for all $\eta\in\Omega$. Our following goal is to show that we can move the set $T_{j,\eta}$ along a diagonal without losing the linear independence for all $j\in\{1,\ldots, n\}$. First we need the following combinatorial identities.
 
 \begin{lem}{\cite[Chapter 1]{Rio}}\label{combi} Given $n,m,p\in\N$, we have the following identities:
\begin{itemize}
    \item[1)] $\binom{n}{m}\binom{m}{p}=\binom{n}{p}\binom{n-p}{m-p}$.
    \item[2)]$\sum_{j}(-1)^{j}\binom{n-j}{m}\binom{p}{j}=\binom{n-p}{m-p}=\binom{n-p}{n-m}$.
    \item[3)] $\sum_{j}\binom{n}{m-j}\binom{p}{j}=\binom{n+p}{m}$.
    \item[4)] $\sum_{j}\binom{n-p}{m-j}\binom{p}{j}=\binom{n}{m}.$
\end{itemize}

\end{lem}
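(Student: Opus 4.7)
The plan is to verify each of the four identities using elementary techniques: either direct factorial manipulation or generating function arguments based on the binomial expansion of $(1+x)^k$. All four are classical, so no deep insight is required; the work is essentially bookkeeping, and in fact the paper cites Riordan for precisely this reason.

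For identity (1), the clean route is to expand both sides as ratios of factorials. Both $\binom{n}{m}\binom{m}{p}$ and $\binom{n}{p}\binom{n-p}{m-p}$ reduce to $\frac{n!}{p!\,(m-p)!\,(n-m)!}$ after cancellation, proving the identity. A combinatorial proof is equally quick: both sides count the number of ordered pairs consisting of an $m$-subset of an $n$-set together with a $p$-subset of that $m$-subset.

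For identity (3), I would appeal to Vandermonde convolution: comparing the coefficients of $x^m$ on both sides of $(1+x)^n(1+x)^p = (1+x)^{n+p}$ immediately yields $\sum_j \binom{n}{m-j}\binom{p}{j} = \binom{n+p}{m}$. Identity (4) is then an immediate consequence: substituting $n \mapsto n-p$ in (3) gives $\sum_j \binom{n-p}{m-j}\binom{p}{j} = \binom{n-p+p}{m} = \binom{n}{m}$.

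For identity (2), the approach is a generating function manipulation. Since $\binom{n-j}{m}$ is the coefficient of $x^m$ in $(1+x)^{n-j}$, one computes
\[
\sum_j (-1)^j \binom{p}{j} (1+x)^{n-j} = (1+x)^n \sum_j \binom{p}{j}\left(\frac{-1}{1+x}\right)^j = (1+x)^n \left(\frac{x}{1+x}\right)^p = x^p (1+x)^{n-p}.
\]
Extracting the coefficient of $x^m$ yields $\binom{n-p}{m-p}$, and the second equality $\binom{n-p}{m-p} = \binom{n-p}{n-m}$ is the standard symmetry $\binom{a}{b} = \binom{a}{a-b}$. There is no real obstacle in this lemma; the only mild issue is fixing the convention that $\binom{a}{b} = 0$ whenever $b < 0$ or $b > a$, which makes the summations finite and the generating-function manipulation rigorous.
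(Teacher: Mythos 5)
Your proposal is correct, and in fact it does more than the paper, which offers no proof at all: Lemma \ref{combi} is simply quoted from Riordan's \emph{Combinatorial identities} (Chapter 1), so there is nothing in the text to compare against step by step. Your verifications are the standard ones and they hold up: (1) by cancelling factorials (or the double-counting of a $p$-subset inside an $m$-subset of an $n$-set), (3) by Vandermonde convolution via $(1+x)^n(1+x)^p=(1+x)^{n+p}$, (4) by the substitution $n\mapsto n-p$ in (3), and (2) by summing $(-1)^j\binom{p}{j}(1+x)^{n-j}$ to get $x^p(1+x)^{n-p}$ and extracting the coefficient of $x^m$, with the second equality being the symmetry $\binom{a}{b}=\binom{a}{a-b}$. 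The only point worth pinning down is the one you already flag: all four identities are used in the paper with the convention $\binom{a}{b}=0$ for $b<0$ or $b>a$, and in (2) the manipulation is a genuine polynomial identity only when $p\leq n$ (so that every exponent $n-j$ with $0\leq j\leq p$ is nonnegative); that is the regime in which the paper invokes it (e.g.\ in Lemmas \ref{depenrecta} and \ref{lema11}), so your argument covers what is needed. A self-contained proof like yours buys independence from the citation; the paper's choice buys brevity, which is reasonable for classical identities.
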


  \begin{lem}\label{depenrecta}
  For all $m\in\N$, we have that \small $\overline{(m,m)}\in\spn_{\C}\{\overline{(1,1)},\ldots,\overline{(n,n)}\}$. 
  \normalsize
  \end{lem}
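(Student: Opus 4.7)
The plan is to view each coordinate of $\overline{(m,m)}$ as a polynomial in the variable $m$ and invoke Lagrange interpolation at a suitable set of nodes.

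First, I would observe that for each $\alpha=(a,b)\in\Lambda_{2,n}$ the $\alpha$-th entry of $\overline{(m,m)}$ is $\binom{m}{a}\binom{m}{b}$, which, seen as a function of $m$, is a polynomial of degree $a+b\le n$. Here the defining constraint $|\alpha|\le n$ of $\Lambda_{2,n}$ is what bounds the degree. Moreover, since $|\alpha|\ge 1$ forces $a\ge 1$ or $b\ge 1$, and $\binom{0}{j}=0$ for $j\ge 1$, this polynomial vanishes at $m=0$.

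Next, I would interpolate at the $n+1$ nodes $0,1,\ldots,n$. Setting
$$L_k(m):=\prod_{\substack{0\le j\le n\\ j\ne k}}\frac{m-j}{k-j}\qquad(k=1,\ldots,n),$$
Lagrange interpolation yields the polynomial identity
$$\binom{m}{a}\binom{m}{b}=\sum_{k=1}^{n}L_k(m)\,\binom{k}{a}\binom{k}{b},$$
where the $k=0$ summand is absent precisely because the left-hand side vanishes at $m=0$. The essential point is that the scalars $L_k(m)$ depend only on $m$ and $k$, not on the index $\alpha$.

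Finally, since $L_k(m)$ is independent of $\alpha$, the above coordinate-wise identity assembles into the single vector equality $\overline{(m,m)}=\sum_{k=1}^{n}L_k(m)\,\overline{(k,k)}$ in $\C^{\lambda_{2,n}}$, which is exactly the claim that $\overline{(m,m)}\in\spn_{\C}\{\overline{(1,1)},\ldots,\overline{(n,n)}\}$. There is essentially no obstacle here: the lemma is a pure polynomial fact whose only subtlety is the observation that the degree bound $a+b\le n$ together with the vanishing at $0$ exactly matches the $n$ spanning vectors $\overline{(1,1)},\ldots,\overline{(n,n)}$ that are available. A purely combinatorial alternative would be to expand $\binom{m}{a}\binom{m}{b}$ in the Newton basis $\{\binom{m}{k}\}_{k=1}^{n}$ using the identities of Lemma \ref{combi} and then invert the change of basis, but the interpolation route is shorter and avoids writing the coefficients out explicitly.
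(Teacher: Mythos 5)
Your proof is correct, and it takes a genuinely different route from the paper. You treat each coordinate $\binom{m}{a}\binom{m}{b}$ (for $\alpha=(a,b)\in\Lambda_{2,n}$) as a polynomial in $m$ of degree $a+b\leq n$ vanishing at $m=0$, and then invoke Lagrange interpolation at the nodes $0,1,\ldots,n$; since the resulting scalars $L_k(m)$ do not depend on $\alpha$, the coordinate-wise identities assemble into the vector identity $\overline{(m,m)}=\sum_{k=1}^{n}L_k(m)\,\overline{(k,k)}$. The two observations that make this work --- the degree bound coming from $|\alpha|\leq n$ and the vanishing at $0$ coming from $|\alpha|\geq 1$, which is exactly what lets you drop the node $k=0$ and land in the span of the $n$ available vectors --- are both stated and correct. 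The paper instead proceeds in what is essentially the Newton (finite-difference) basis you mention at the end: it introduces the vectors $v_{j}=\sum_{i=1}^{j}(-1)^{j-i}\binom{j}{i}\overline{(i,i)}$ and verifies the explicit expansion $\overline{(m,m)}=\sum_{j=1}^{n}\binom{m}{j}v_{j}$ coordinate by coordinate, reducing it to Vandermonde-type identities from Lemma \ref{combi}. Your interpolation argument is shorter and avoids those binomial manipulations (and is closer in spirit to the polynomial argument the paper itself uses in Lemma \ref{lemacomb1}), while the paper's computation has the mild advantage of exhibiting explicit integer coefficients $\binom{m}{j}$ in a basis-free-of-$m$ form; neither feature is needed elsewhere, so both proofs serve equally well.
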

  
  \begin{proof}
  Recalling notation \ref{notacion}, consider the vector $$v_{j}=\sum_{i=1}^{j}(-1)^{j-i}\binom{j}{i}\overline{(i,i)},$$ 
  for each $j\in\{1,\ldots,n\}$. Notice that for all $j\in\{1,\ldots,n\}$, we have $v_{j}\in\spn_{\C}\{\overline{(1,1)},\ldots,\overline{(n,n)}\}$. We claim that $\overline{(m,m)}=\sum_{j=1}^{n}\binom{m}{j}v_{j}$. We have to prove the identity:
  $$\binom{m}{p-q}\binom{m}{q}=\sum_{j=1}^{n}\sum_{i=1}^{j}(-1)^{j-i}\binom{m}{j}\binom{j}{i}\binom{i}{q}\binom{i}{p-q},$$
   for all $1\leq p \leq n$ and $0\leq q\leq p$. By $1)$ of lemma \ref{combi}, we obtain the identities:
  \begin{align}
\sum_{j=1}^{n}\sum_{i=1}^{j}(-1)^{j-i}\binom{m}{j}\binom{j}{i}\binom{i}{q}\binom{i}{p-q} & = \notag\\
\sum_{j=1}^{n}\sum_{i=1}^{j}(-1)^{j-i}\binom{m}{j}\binom{j}{q}\binom{j-q}{i-q}\binom{i}{p-q} & = \notag\\
\sum_{j=1}^{n}\sum_{i=1}^{j}(-1)^{j-i}\binom{m}{q}\binom{m-q}{j-q}\binom{j-q}{i-q}\binom{i}{p-q} &  = \notag\\
\binom{m}{q}\sum_{j=1}^{n}\sum_{i=1}^{j}(-1)^{j-i}\binom{m-q}{j-q}\binom{j-q}{i-q}\binom{i}{p-q}. &\notag
\end{align}
With this, the claim is reduced to prove that
  $$\binom{m}{p-q}=\sum_{j=1}^{n}\sum_{i=1}^{j}(-1)^{j-i}\binom{m-q}{j-q}\binom{j-q}{i-q}\binom{i}{p-q}.$$
Now we have the following identities, where the second identity comes from the rearrangement of the coefficients and the fourth identity by $2)$ of lemma \ref{combi}.
\begin{align}
\sum_{j=1}^{n}\sum_{i=1}^{j}(-1)^{j-i}\binom{m-q}{j-q}\binom{j-q}{i-q}\binom{i}{p-q} & = \notag \\
\sum_{j=1}^{n}(-1)^{j}\binom{m-q}{j-q}\Big( \sum_{i=1}^{j}(-1)^{i}\binom{j-q}{i-q}\binom{i}{p-q}\Big)& = \notag \\
\sum_{j=1}^{n}(-1)^{j}\binom{m-q}{j-q}\Big( \sum_{i}(-1)^{j-i}\binom{j-i}{p-q}\binom{j-q}{i}\Big)& = \notag \\
\sum_{j=1}^{n}(-1)^{j}\binom{m-q}{j-q}(-1)^{j}\Big( \sum_{i}(-1)^{i}\binom{j-i}{p-q}\binom{j-q}{i}\Big)& = \notag \\
\sum_{j=1}^{n}\binom{m-q}{j-q}\binom{q}{p-j}.
\end{align}
Finally, we have the following identities, where the first identity comes from replace $j$ by $j+q$ and the second by $3)$ of lemma \ref{combi},
$$\sum_{j=1}^{n}\binom{m-q}{j-q}\binom{q}{p-j}= \sum_{j=1}^{n}\binom{m-q}{j}\binom{q}{(p-q)-j}= \binom{m}{p-q},$$
 proving the claim. 

\end{proof}

\begin{lem}\label{lema11}For all $a,r\in\N$ and $l\leq n$, we have that
$$\sum_{i=0}^{l}(-1)^{i}\binom{l}{i}\sum_{j=0}^{n-l+1}(-1)^{n-l+1+j}\binom{n-l+1}{j}\overline{(a+r+j,r+i+j)}=\bar{0}.$$

\end{lem}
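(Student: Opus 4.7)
The plan is to verify the identity coordinate-by-coordinate in $\C^{\lambda_{2,n}}$. For each $\alpha=(p,q)\in\Lambda_{2,n}$, the $\alpha$-th entry of $\overline{(a+r+j,r+i+j)}$ equals $\binom{a+r+j}{p}\binom{r+i+j}{q}$, so the statement reduces to the scalar identity
$$\sum_{i=0}^{l}(-1)^{i}\binom{l}{i}\sum_{j=0}^{n-l+1}(-1)^{n-l+1+j}\binom{n-l+1}{j}\binom{a+r+j}{p}\binom{r+i+j}{q}=0$$
for every $(p,q)\in\Lambda_{2,n}$.

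First I would collapse the $i$-sum. By Vandermonde (item 4 of Lemma \ref{combi}), $\binom{r+i+j}{q}=\sum_{t}\binom{r+j}{q-t}\binom{i}{t}$. Swapping the order of summation and rewriting $\binom{l}{i}\binom{i}{t}=\binom{l}{t}\binom{l-t}{i-t}$ via item 1 of Lemma \ref{combi}, the inner sum becomes
$$\sum_{i=0}^{l}(-1)^{i}\binom{l}{i}\binom{i}{t}=(-1)^{t}\binom{l}{t}\sum_{k}(-1)^{k}\binom{l-t}{k}=(-1)^{l}\delta_{t,l},$$
using the binomial identity $(1-1)^{l-t}=\delta_{l,t}$ (noting that for $t>l$ the original sum is already zero because $\binom{i}{t}=0$ for $i\leq l<t$). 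Hence only the term $t=l$ contributes, and the whole identity reduces to showing
$$\sum_{j=0}^{n-l+1}(-1)^{j}\binom{n-l+1}{j}\binom{a+r+j}{p}\binom{r+j}{q-l}=0,$$
at least when $l\leq q$; when $l>q$, the inner $i$-sum already vanishes and there is nothing to prove.

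The remaining $j$-sum is (up to sign) the $(n-l+1)$-th forward finite difference, evaluated at $0$, of the polynomial $j\mapsto\binom{a+r+j}{p}\binom{r+j}{q-l}$, which has degree $p+(q-l)$. Since $(p,q)\in\Lambda_{2,n}$ satisfies $p+q\leq n$, we get $p+q-l\leq n-l<n-l+1$, and a finite difference of order strictly greater than the polynomial's degree is zero. The main obstacle is essentially notational: verifying that the constraint $p+q\leq n$ built into $\Lambda_{2,n}$ is exactly what makes the finite-difference argument close, and correctly handling the boundary case $l>q$ where the $i$-sum vanishes for trivial reasons.
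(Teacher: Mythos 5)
Your proof is correct, and it finishes by a genuinely different route than the paper. Both arguments start by collapsing the $i$-sum: you do it via Vandermonde, $\binom{r+i+j}{q}=\sum_{t}\binom{r+j}{q-t}\binom{i}{t}$, together with $\sum_{i}(-1)^{i}\binom{l}{i}\binom{i}{t}=(-1)^{l}\delta_{t,l}$, whereas the paper applies identity 2) of Lemma \ref{combi} directly -- but the paper only performs this collapse in the base case $r=0$ of an induction on $r$. There, after further manipulations with identities 1) and 2), the remaining $j$-sum is identified with $\binom{a+q-l}{p-n-1}$, which vanishes because the total degree is at most $n$; the case of general $r$ is then obtained by expanding $\binom{a+r+j}{p-q}\binom{r+i+j}{q}$ through Pascal's rule into four terms at level $r-1$ and invoking the induction hypothesis for each. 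You avoid the induction altogether: having reduced to $\sum_{j=0}^{n-l+1}(-1)^{j}\binom{n-l+1}{j}\binom{a+r+j}{p}\binom{r+j}{q-l}$, you note that this is (up to sign) the $(n-l+1)$-st finite difference at $0$ of a polynomial in $j$ of degree $p+q-l\leq n-l$, hence zero, uniformly in $r$ and $a$. This is arguably cleaner: a single degree count replaces both the base-case identity chase and the four-term Pascal induction, and it makes explicit that the constraint $p+q\leq n$ built into $\Lambda_{2,n}$ is exactly what forces the vanishing (the same mechanism hidden in the paper's $\binom{a+q-l}{p-n-1}=0$). The only ingredient you use beyond Lemma \ref{combi} is the standard fact that an $N$-th difference annihilates polynomials of degree less than $N$ (equivalently $\sum_{j}(-1)^{j}\binom{N}{j}\binom{j}{s}=0$ for $s<N$), which is routine and in the spirit of Lemma \ref{lemacomb1}; your treatment of the boundary case $l>q$, where the collapsed $i$-sum already vanishes, is also correct.
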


\begin{proof}The proof is by induction on $r$. First, consider $r=0$, we need to show that for all $(p-q,q)\in\Lambda_{2,n}$, 
$$\sum_{i=0}^{l}(-1)^{i}\binom{l}{i}\sum_{j=0}^{n-l+1}(-1)^{n-l+1+j}\binom{n-l+1}{j}\binom{a+j}{p-q}\binom{i+j}{q}=0.$$

First, notice 
\begin{align}\label{eq2.13.1}
    \sum_{i=0}^{l}(-1)^{i}\binom{l}{i}\sum_{j=0}^{n-l+1}(-1)^{n-l+1+j}\binom{n-l+1}{j}\binom{a+j}{p-q}\binom{i+j}{q} & = \notag\\
    \sum_{i=0}^{l}\sum_{j=0}^{n-l+1}(-1)^{n-l+1+j+i}\binom{n-l+1}{j}\binom{a+j}{p-q}\binom{i+j}{q}\binom{l}{i} & = \notag\\
    \sum_{j=1}^{n-l+1}(-1)^{n-l+1+j}\binom{n-l+1}{j}\binom{a+j}{p-q}\Big( \sum_{i=0}^{l}(-1)^{i}\binom{i+j}{q}\binom{l}{i}\Big). 
\end{align}
Now we have the following identity, where the first identity comes from the rearrangement of the sum and the second comes from $2)$ of lemma \ref{combi},
$$\sum_{i=0}^{l}(-1)^{i}\binom{i+j}{q}\binom{l}{i}=(-1)^{l}\sum_{i=0}^{l}(-1)^{i}\binom{l}{i}\binom{l+j-i}{q}=(-1)^{l}\binom{j}{q-l},$$
Replacing this identity in the sum (\ref{eq2.13.1}) and using $1)$ of lemma \ref{combi}, we obtain 
\begin{align}\label{eq2.13.2}
    \sum_{j=0}^{n-l+1}(-1)^{n+1+j}\binom{n-l+1}{j}\binom{a+j}{p-q}\binom{j}{q-l} & = \notag\\
   \sum_{j=0}^{n-l+1}(-1)^{n+1+j}\binom{n-l+1}{q-l}\binom{n-q+1}{j-q+l}\binom{a+j}{p-q} & = \notag\\
    (-1)^{n+1}\binom{n-l+1}{q-l}\sum_{j=0}^{n+1+j}(-1)^{j}\binom{n-q+1}{(n-l+1)-j}\binom{a+j}{p-q}. 
\end{align}
Replacing $i$ by $n-l+1-j$ on the sum \ref{eq2.13.2} and using $2)$ of lemma \ref{combi}, we obtain that
\begin{align}
    \sum_{j=0}^{n+1+j}(-1)^{j}\binom{n-q+1}{(n-l+1)-j}\binom{a+j}{p-q} & = \notag\\
    \sum_{j=0}^{n+1+j}(-1)^{n-l+1-j}\binom{n-q+1}{j}\binom{a+(n-l+1)-j}{p-q} & = \notag\\
    \binom{a+q-l}{p-n-1}. &  \notag
    \end{align}
Since $p\leq n$, the claim is true for $r=0$.

Now suppose that is true for $r-1$, i.e., $$\sum_{i=0}^{l}(-1)^{i}\binom{l}{i}\sum_{j=0}^{n-l+1+j}\binom{n-l+1}{j}\binom{a+(r-1)+j}{p-q}\binom{(r-1)+i+j}{q}=0,$$
and we have to show that $$\sum_{i=0}^{l}(-1)^{i}\binom{l}{i}\sum_{j=0}^{n-l+1+j}\binom{n-l+1}{j}\binom{a+r+j}{p-q}\binom{r+i+j}{q}=0,$$
for all $(p-q,q)\in\Lambda_{2,n}$.

Using basic properties of binomial coefficients, we have 
\small
\begin{align}
 &   \binom{a+r+j}{p-q}\binom{r+i+j}{q}= \notag\\
  & \binom{a+(r-1)+j}{p-q}\binom{(r-1)+i+j}{q}+\binom{a+(r-1)+j}{p-q}\binom{(r-1)+i+j}{q-1}+ \notag\\
  & \binom{a+(r-1)+j}{p-q-1}\binom{(r-1)+i+j}{q}+\binom{a+(r-1)+j}{p-q-1}\binom{(r-1)+i+j}{q-1}.\notag
\end{align}
\normalsize
Then
\small
\begin{align}
    & \sum_{i=0}^{l}(-1)^{i}\binom{l}{i}\sum_{j=0}^{n-l+1+j}\binom{n-l+1}{j}\binom{a+r+j}{p-q}\binom{r+i+j}{q}\notag\\
    =& \sum_{i=0}^{l}(-1)^{i}\binom{l}{i}\sum_{j=0}^{n-l+1+j}\binom{n-l+1}{j}\binom{a+(r-1)+j}{p-q}\binom{(r-1)+i+j}{q}\notag\\
    +&\sum_{i=0}^{l}(-1)^{i}\binom{l}{i}\sum_{j=0}^{n-l+1+j}\binom{n-l+1}{j}\binom{a+(r-1)+j}{p-q}\binom{(r-1)+i+j}{q-1}\notag\\
    +&\sum_{i=0}^{l}(-1)^{i}\binom{l}{i}\sum_{j=0}^{n-l+1+j}\binom{n-l+1}{j}\binom{a+(r-1)+j}{p-q-1}\binom{(r-1)+i+j}{q}\notag\\
    +&\sum_{i=0}^{l}(-1)^{i}\binom{l}{i}\sum_{j=0}^{n-l+1+j}\binom{n-l+1}{j}\binom{a+(r-1)+j}{p-q-1}\binom{(r-1)+i+j}{q-1}=0.\notag
\end{align}
\normalsize
Notice that each element of $\{(p-l,l),(p-l,l-1),(p-l-1,l),(p-l-1,l-1)\}$ belongs to $\Lambda_{2,n}$ or has a negative entry.
In any case, by induction hypothesis, each of the four sums are zero, obtaining the result.

\end{proof}

\begin{coro}\label{coro11}For all $a,r\in\N$ and $l\leq n$, we have that
$$\sum_{i=0}^{l}(-1)^{i}\binom{l}{i}\sum_{j=0}^{n-l+1}(-1)^{n-l+1+j}\binom{n-l+1}{j}\overline{(r+i+j,a+r+j)}=\bar{0}.$$

\end{coro}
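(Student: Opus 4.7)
The plan is to derive Corollary \ref{coro11} from Lemma \ref{lema11} via the symmetry between the two coordinates of $\N^{2}$. For any $v=(v_{1},v_{2})\in\N^{2}$ and any $\alpha=(\alpha_{1},\alpha_{2})\in\Lambda_{2,n}$, notation \ref{notacion} gives $\binom{v}{\alpha}=\binom{v_{1}}{\alpha_{1}}\binom{v_{2}}{\alpha_{2}}$. Since $\Lambda_{2,n}$ is stable under the involution $s(\alpha_{1},\alpha_{2}):=(\alpha_{2},\alpha_{1})$, relabelling coordinates of $\C^{\lambda_{2,n}}$ by $s$ yields a linear automorphism $\Phi:\C^{\lambda_{2,n}}\to\C^{\lambda_{2,n}}$ defined by $\Phi(\mathbf{x})_{\alpha}=\mathbf{x}_{s(\alpha)}$. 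A one-line check shows $\Phi(\overline{(v_{1},v_{2})})_{\alpha}=\binom{v_{1}}{\alpha_{2}}\binom{v_{2}}{\alpha_{1}}=\overline{(v_{2},v_{1})}_{\alpha}$, so $\Phi(\overline{v})=\overline{s(v)}$ for every $v\in\N^{2}$, and in particular $\Phi(\bar{0})=\bar{0}$.

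Applying $\Phi$ to both sides of the identity proved in Lemma \ref{lema11} then converts each term $\overline{(a+r+j,r+i+j)}$ into $\overline{(r+i+j,a+r+j)}$ while preserving the right-hand side $\bar{0}$, which is precisely the identity claimed by the corollary. Equivalently, one can avoid mentioning $\Phi$ and simply observe that after taking the projection on the $(p-q,q)$-th coordinate, the statement of Corollary \ref{coro11} becomes
$$\sum_{i=0}^{l}(-1)^{i}\binom{l}{i}\sum_{j=0}^{n-l+1}(-1)^{n-l+1+j}\binom{n-l+1}{j}\binom{r+i+j}{p-q}\binom{a+r+j}{q}=0,$$
which is exactly what one obtains by projecting the identity of Lemma \ref{lema11} on the $(q,p-q)$-th coordinate, a point that also lies in $\Lambda_{2,n}$. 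No further combinatorial identity is required, so no substantive obstacle is expected along either route.
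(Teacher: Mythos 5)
Your proposal is correct and follows essentially the same route as the paper: the paper also deduces the corollary from Lemma \ref{lema11} by noting that each coordinate identity for $(p-q,q)\in\Lambda_{2,n}$ is the lemma's identity at the swapped index $(q,p-q)$, which again lies in $\Lambda_{2,n}$. Your explicit coordinate-swapping automorphism $\Phi$ is just a cleaner packaging of the same symmetry argument.
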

\begin{proof}We need to prove that $$\sum_{i=0}^{l}(-1)^{i}\binom{l}{i}\sum_{j=0}^{n-l+1+j}\binom{n-l+1}{j}\binom{r+i+j}{p-q}\binom{a+r+j}{q}=0,$$
for all $(p-q,q)\in\Lambda_{2,n}$. Notice that by definition of $\Lambda_{2,n}$, if $(p-q,q)\in\Lambda_{2,n}$ then $(q,p-q)\in\Lambda_{2,n}$. With this and the previous lemma we obtain the result.
\end{proof}

Now we are ready to show the other important result of this section. As we mentioned before, the goal is to show that we can move the sets $T_{j,\eta}$ along a diagonal without losing the linear independence. We are going to prove this with some additional properties.



\begin{pro}\label{gene}Let $\eta\in\Omega$ and $l\in\{1,\ldots,n\}$. Let $(r_{1},\ldots,r_{l})\in\N^{l}$ and $T_{i,\eta}+r_{i}:=\{v+(r_{i},r_{i})\mid v\in T_{i,\eta}\}$. Then, we have 
\small
$$\spn_{\C}\{\bar{v}\in\C^{\lambda_{2,n}}\mid v\in T_{0,\eta}\bigcup(\cup_{i=1}^{l}T_{i,\eta}+r_{i})\}=\spn_{\C}\{\bar{v}\in\C^{\lambda_{2,n}}\mid v\in \cup_{i=0}^{l}T_{i,\eta}\}.$$
\normalsize

In particular, $\overline{v_{l,\eta}+(r,r)}\in\spn_{\C}\{\bar{v}\in\C^{\lambda_{2,n}}\mid v\in\cup_{i=0}^{l}T_{i,\eta}\}$, for all  $r\in\N$.
\end{pro}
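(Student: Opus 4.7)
\medskip
\noindent\textbf{Proof proposal.} My plan is to induct on $l$, treating $l=0$ as a vacuous base (no shifts, and $\cup_{i=0}^{0}T_{i,\eta}=T_{0,\eta}$). For the inductive step, I assume the statement holds for all smaller values of $l$ with any choice of shifts; applying it with index $l-1$ identifies
$$V_{l-1}:=\spn_{\C}\{\bar{v}\mid v\in T_{0,\eta}\cup\bigcup_{i=1}^{l-1}(T_{i,\eta}+r_i)\}=\spn_{\C}\{\bar{v}\mid v\in\bigcup_{i=0}^{l-1}T_{i,\eta}\},$$
so the problem reduces to proving
$$\spn_{\C}\{\bar{v}\mid v\in T_{l,\eta}+r_l\}+V_{l-1}=\spn_{\C}\{\bar{v}\mid v\in T_{l,\eta}\}+V_{l-1}.$$

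To accomplish this I would introduce a sliding window along the diagonal through $v_{l,\eta}$,
$$W_r:=\spn_{\C}\{\overline{v_{l,\eta}+s(1,1)}:r\le s\le r+n-l\}+V_{l-1},$$
so that $W_0$ equals the right-hand side and $W_{r_l}$ equals the left-hand side above, and show by induction on $r\ge 1$ that $W_r=W_{r-1}$. Assume $v_{l,\eta}=(a,0)$ (the case $v_{l,\eta}=(0,a)$ is symmetric, using Corollary \ref{coro11} in place of Lemma \ref{lema11}). Invoking Lemma \ref{lema11} with parameter $r-1$ yields a relation whose $i=0$ row involves precisely the $n-l+2$ diagonal vectors $\overline{v_{l,\eta}+s(1,1)}$ with $s\in\{r-1,\ldots,r+n-l\}$, and with nonzero coefficients $(-1)^{n-l+1}$ at the left endpoint $s=r-1$ and $1$ at the right endpoint $s=r+n-l$. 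Solving this relation for either endpoint exhibits it as a combination of the interior window vectors (which belong to $W_{r-1}\cap W_r$) plus the $i\ge 1$ contributions, giving both inclusions $W_{r-1}\subseteq W_r$ and $W_r\subseteq W_{r-1}$ simultaneously.

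The main obstacle, and the focal point of the argument, is verifying that every $i\ge 1$ contribution already lies in $V_{l-1}$. A direct case analysis rewrites $(a+(r-1)+j,(r-1)+i+j)$ either as $v_{p,\eta}+s(1,1)$ for some $p<l$ (when $i\ne a$, taking $v_{p,\eta}=(a-i,0)$ if $i<a$ and $v_{p,\eta}=(0,i-a)$ if $i>a$), or as $(s,s)$ on the main diagonal (when $i=a$). Lemma \ref{rem2}(6) applied to $v_{l,\eta}=(a,0)$ is precisely what guarantees that the points $(a-i,0)$ with $1\le i<a$ and $(0,i-a)$ with $a<i\le l$ do occur as $v_{p,\eta}$ for some $p\le l-1$. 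For such a $p$ and any $s\ge 0$, invoking the inductive hypothesis on $l$ with only $T_{p,\eta}$ shifted by $\max\{0,s-(n-p)\}$ places $\overline{v_{p,\eta}+s(1,1)}$ inside $V_{l-1}$. In the diagonal case, Lemma \ref{depenrecta} places each $\overline{(s,s)}$ in $\spn_{\C}\{\bar{v}\mid v\in T_{0,\eta}\}\subseteq V_{l-1}$.

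Once $W_r=W_{r-1}$ is established, iterating in $r$ gives $W_{r_l}=W_0$ and hence the required span equality. The ``in particular'' statement is then immediate: for each $r\ge 0$, the vector $\overline{v_{l,\eta}+(r,r)}$ is the leftmost element of the window $W_r=W_0$, which is contained in $\spn_{\C}\{\bar{v}\mid v\in\bigcup_{i=0}^{l}T_{i,\eta}\}$.
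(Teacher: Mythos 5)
Your proof is correct and follows essentially the same route as the paper: induction on $l$, Lemma \ref{lema11} (resp.\ Corollary \ref{coro11}) to slide $T_{l,\eta}$ one step along the diagonal, with Lemma \ref{rem2}(6), Lemma \ref{depenrecta} and the inductive hypothesis absorbing the $i\ge 1$ terms into $V_{l-1}$, then iterating in $r$. The only (cosmetic) difference is that you establish each one-step shift by solving the relation for both endpoints to get the two span inclusions $W_{r-1}\subseteq W_r\subseteq W_{r-1}$, where the paper phrases the same step as exchanging the leftmost spanning vector for the new extreme one using the nonzero endpoint coefficient.
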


\begin{proof} 
Let $\eta\in\Omega$. The proof is by induction on $l$. Consider $l=1$. There are two cases, $v_{1,\eta}=(1,0)$ or $v_{1,\eta}=(0,1)$. Suppose that $v_{1,\eta}=(1,0)$. Consider the sums
$$f_{0,r}=\sum_{j=0}^{n}(-1)^{n+j}\binom{n}{j}\overline{(1+r+j,r+j)},$$
$$f_{1,r}=\sum_{j=0}^{n}(-1)^{n+j}\binom{n}{j}\overline{(1+r+j,1+r+j)}.$$
Applying lemma \ref{lema11} for $a=1$ and $l=1$, we obtain that 
 \begin{equation}\label{pro2.14.1}
     f_{1,r}-f_{0,r}=\bar{0},
 \end{equation}
for all $r\in\N$. 

 By  lemma \ref{depenrecta}, we have that 
$$\{\overline{(1+r+j,1+r+j)}\}_{j=0}^{n}\subset\spn_{\C}\{\bar{v}\in\C^{\lambda_{2,n}}\mid v\in T_{0,\eta}\},$$
for all $r,j\in\N$. In particular $f_{1,r}\in\spn_{\C}\{\bar{v}\in\C^{\lambda_{2,n}}\mid v\in T_{0,\eta}\}$. Moreover, since $v_{1,\eta}=(1,0)$, for $r=0$, we have that $f_{0,0}-\overline{(1+n,n)}\in\spn_{\C}\{\bar{v}\in\C^{\lambda_{2,n}}\mid v\in T_{1,\eta}\}$. Then $$\overline{(1+n,n)}=f_{1,0}-f_{0,0}+\overline{(1+n,n)}\in\spn_{\C}\{\bar{v}\in\C^{\lambda_{2,n}}\mid v\in T_{0\eta}\cup T_{1,\eta}\}.$$

Notice that the coefficient of $\overline{(1,0)}$ is not zero. By elemental results of linear algebra, we have that 
\begin{align*}\spn_{\C}\{\bar{v}\in\C^{\lambda_{2,n}}\mid v\in T_{0\eta}\cup T_{1,\eta}\}&=\\
\Big(\spn_{\C}\{\bar{v}\in\C^{\lambda_{2,n}}\mid v\in T_{0\eta}\cup T_{1,\eta}\}\setminus\{\overline{(1,0)}\}\Big)\cup\{\overline{(1+n,n)}\}&=\\
\spn_{\C}\{\bar{v}\in\C^{\lambda_{2,n}}\mid v\in T_{0\eta}\cup \Big( T_{1,\eta}+1\Big)\}.
\end{align*}
Applying the same argument for $r=1$ in (\ref{pro2.14.1}), we obtain that 
\begin{align*}\spn_{\C}\{\bar{v}\in\C^{\lambda_{2,n}}\mid v\in T_{0\eta}\cup T_{1,\eta}+1\}&=\\
\Big(\spn_{\C}\{\bar{v}\in\C^{\lambda_{2,n}}\mid v\in T_{0\eta}\Big)\cup T_{1,\eta}+1\}\setminus\{\overline{(2,1)}\}\cup\{\overline{(2+n,1+n)}\}&=\\
\spn_{\C}\{\bar{v}\in\C^{\lambda_{2,n}}\mid v\in T_{0\eta}\cup \Big( T_{1,\eta}+2\Big)\}.
\end{align*}
Repeating the argument $r_{1}$ times for each $r$ and putting together all the identities, we obtain that 
$$\spn_{\C}\{\bar{v}\in\C^{\lambda_{2,n}}\mid v\in T_{0\eta}\cup T_{1,\eta}\}=\spn_{\C}\{\bar{v}\in\C^{\lambda_{2,n}}\mid v\in T_{0\eta}\cup\Big( T_{1,\eta}+r_{1}\Big)\}.$$
This finish the proof for $l=1$ and $v_{1,\eta}=(1,0)$. For $v_{1,\eta}=(0,1)$ the proof is analogous using the corollary \ref{coro11}.

Now suppose that the statement is true for $l-1$ and let $(r_{1},\ldots,r_{l})\in\N^{l}$. We claim that \begin{multline*}
 \spn_{\C}\{\bar{v}\in\C^{\lambda_{2,n}}\mid v\in \cup_{i=0}^{l-1}T_{i,\eta}\cup\Big( T_{l,\eta}+r_{l}\Big)\}=\\
\spn_{\C}\{\bar{v}\in\C^{\lambda_{2,n}}\mid v\in \cup_{i=0}^{l}T_{i,\eta}\}.   
\end{multline*}
Assume this claim for the moment. By induction hypothesis, we have that \begin{multline*}
\spn_{\C}\{\bar{v}\in\C^{\lambda_{2,n}}\mid v\in T_{0,\eta}\bigcup(\cup_{i=1}^{l-1}T_{i,\eta}+r_{i})\}=\\
\spn_{\C}\{\bar{v}\in\C^{\lambda_{2,n}}\mid v\in \cup_{i=0}^{l-1}T_{i,\eta}\}.    
\end{multline*}
This implies that \begin{multline*}
    \spn_{\C}\{\bar{v}\in\C^{\lambda_{2,n}}\mid v\in T_{0,\eta}\bigcup(\cup_{i=1}^{l}T_{i,\eta}+r_{i})\}=\\
    \spn_{\C}\{\bar{v}\in\C^{\lambda_{2,n}}\mid v\in \cup_{i=0}^{l}T_{i,\eta}\}.
\end{multline*}

Now we proceed to prove the claim. There are two cases, $v_{l,\eta}=(a,0)$ or $v_{l,\eta}=(0,a)$, where $0<a\leq l$ by $3)$ of lemma \ref{rem2}. Suppose that $v_{l,\eta}=(a,0)$. 
For each $i\in\{0,\ldots,l\}$ and $r\in\N$, consider the sum
$$f_{i,r}=\sum_{j=0}^{n-l+1}(-1)^{n-l+1+j}\binom{n-l+1}{j}\overline{(a+r+j,i+r+j)}.$$

 Applying lemma \ref{lema11} for $l$ and $a$, we have that 
 \begin{equation}\label{prop2.14.2}
 \sum_{i=0}^{l}(-1)^{i}\binom{l}{i}f_{i,r}=\bar{0}.
     \end{equation}

By $6)$ of lemma \ref{rem2}  we have that $$\{(a-1,0),\ldots,(1,0),(0,1),\ldots,(0,l-a)\}=\{v_{i,\eta}\}_{i=1}^{l-1}.$$
Notice that if $ i=a$, then $\overline{(a+r+j)(1,1)}\in\spn_{\C}\{\bar{v}\subset\C^{\lambda_{2,n}}\mid v\in T_{0,\eta}\}$ by lemma \ref{depenrecta}. If $1\leq i< a$, then
$$(a-i,0)+(i+r+j,i+r+j)=(a+r+j,i+r+j),
$$
and if $a< i\leq l$, then
$$(0,i-a)+(a-i,a-i)+(i+r+j,i+r+j)=(a+r+j,i+r+j).
$$
By the induction hypothesis, we obtain that 
$$\{\overline{(a+r+j,i+r+j)}\}_{j=0}^{n-l+1}\subset\spn_{\C}\{\bar{v}\in\C^{\lambda_{2,n}}\mid v\in\cup_{i=0}^{l-1} T_{i,\eta}\},$$
for all $i\in\{1,\ldots,l\}$, $r\in\N$. 
    In particular  $f_{i,r}\in\spn_{\C}\{\bar{v}\in\C^{\lambda_{2,n}}\mid v\in\cup_{i=0}^{l-1}T_{i,\eta}\}$ for all $i\in\{1,\ldots,l\}$. Moreover, since $v_{l,\eta}=(a,0)$, for $r=0$, we have that $f_{0,0}-\overline{(a+n-l+1,n-l+1)}\in\spn_{\C}\{\bar{v}\in\C^{\lambda_{2,n}}\mid v\in T_{l,\eta}\}$. Then 
    \begin{multline*}\overline{(a+n-l+1,n-l+1)}=-\Big(\sum_{i=0}^{l}(-1)^{i}\binom{l}{i}f_{i,0}\Big)+\overline{(a+n-l+1,n-l+1)}\\
    \in\spn_{\C}\{\bar{v}\in\C^{\lambda_{2,n}}\mid v\in \cup_{i=0}^{l}T_{i,\eta}\}.
    \end{multline*}
Applying the same argument that the case $l=1$, we obtain 
\begin{align*}
    \spn_{\C}\{\bar{v}\in\C^{\lambda_{2,n}}\mid v\in \cup_{i=0}^{l}T_{i,\eta}\}&=\\
    \spn_{\C}\{\bar{v}\in\C^{\lambda_{2,n}}\mid v\in \cup_{i=0}^{l-1}T_{i,\eta}\cup T_{l,\eta}+1\}&=\\
    \spn_{\C}\{\bar{v}\in\C^{\lambda_{2,n}}\mid v\in \cup_{i=0}^{l-1}T_{i,\eta}\cup T_{l,\eta}+2\}&=\\
     \vdots & \\
    \spn_{\C}\{\bar{v}\in\C^{\lambda_{2,n}}\mid v\in \cup_{i=0}^{l-1}T_{i,\eta}\cup T_{l,\eta}+r_{l}\} &.
\end{align*}

Now suppose that $v_{l,\eta}=(0,a)$. In this case we have that 
$$\{(0,a-1),\ldots,(0,1),(1,1),(1,0),\ldots,(l-a,0)\}=\{v_{i,\eta}\}_{i=0}^{l-1}.$$
Obtaining  
$$\{\overline{(i+r+j,a+r+j)}\}_{j=0}^{n-l+1}\in\spn_{\C}\{\bar{v}\in\C^{\lambda_{2,n}}\mid v\in\cup_{i=0}^{l-1} T_{i,\eta}\}.$$
The proof is analogous using corollary \ref{coro11}.
\end{proof}

  \end{subsection}
\end{section}

\begin{section}{Proof of Theorem \ref{mainn}}
In this section we give the proof of the main theorem. We first associate to each $\eta\in\Omega$ a unique $J_{\eta}\in S_{A_{n}}$ with certain properties. Secondly, we construct a distinguished element $J_{\eta_{k}}$ for each $k\in\{1,\ldots,n\}$ and prove that there exists another element $J_{\eta}\in S_{A_{n}}$ with the same value with respect to an order function. Finally, we prove that $J_{\eta_{k}}$ is minimal in $S_{A_{n}}$ with respect to the previous function.


\begin{defi}\label{traslacion}
Let $\eta\in\Omega$, $\{v_{i,\eta}\}_{i=1}^{n}$ and $\{T_{i,\eta}\}_{i=1}^{n}\subset\N^{2}$ as in definition \ref{Teta}. Consider $r_{i,\eta}:=n\cdot\pi_{2}(v_{i,\eta})$ for all $i\in\{1,\ldots,n\}$. We define
$$T_{\eta}':=T_{0,\eta}\cup(\cup_{i=1}^{n}T_{i,\eta}+r_{i,\eta}),$$
where $T_{i,\eta}+r_{i,\eta}:=\{v+(r_{i,\eta},r_{i,\eta})\mid v\in T_{i,\eta}\}$.
\end{defi}

\begin{exam}Let $n=6$, $r=5$ and $\eta=(1,0,1,1,1,1,2)$. By definition \ref{Teta}, we have that $v_{1,\eta}=(1,0)$, $v_{2,\eta}=(0,1)$, $v_{3,\eta}=(2,0)$, $v_{4,\eta}=(0,2)$, $v_{5,\eta}=(3,0)$ and $v_{6,\eta}=(4,0)$. By definition, we obtain that $r_{1,\eta}=0, r_{2,\eta}=6, r_{3,\eta}=0$, $r_{4,\eta}=12$, $r_{5,\eta}=0$ and $r_{6,\eta}=0$. Thus 
$$T'_{\eta}=T_{0,\eta}\cup T_{1,\eta}\cup (T_{2\eta}+6)\cup T_{3,\eta}\cup (T_{4,\eta}+12)\cup T_{5,\eta}\cup T_{6,\eta},$$
where 
$$T_{2,\eta}+6=\{(6,7),(7,8),(8,9),(9,10),(10,11)\},$$
$$T_{4,\eta}+12=\{(12,14),(13,15),(14,16)\}.$$

\begin{figure}[h]
\caption{Example of $T'_{\eta}$, with $\eta=(1,0,1,1,1,1,2)$}
\centering
\includegraphics[scale=.23]{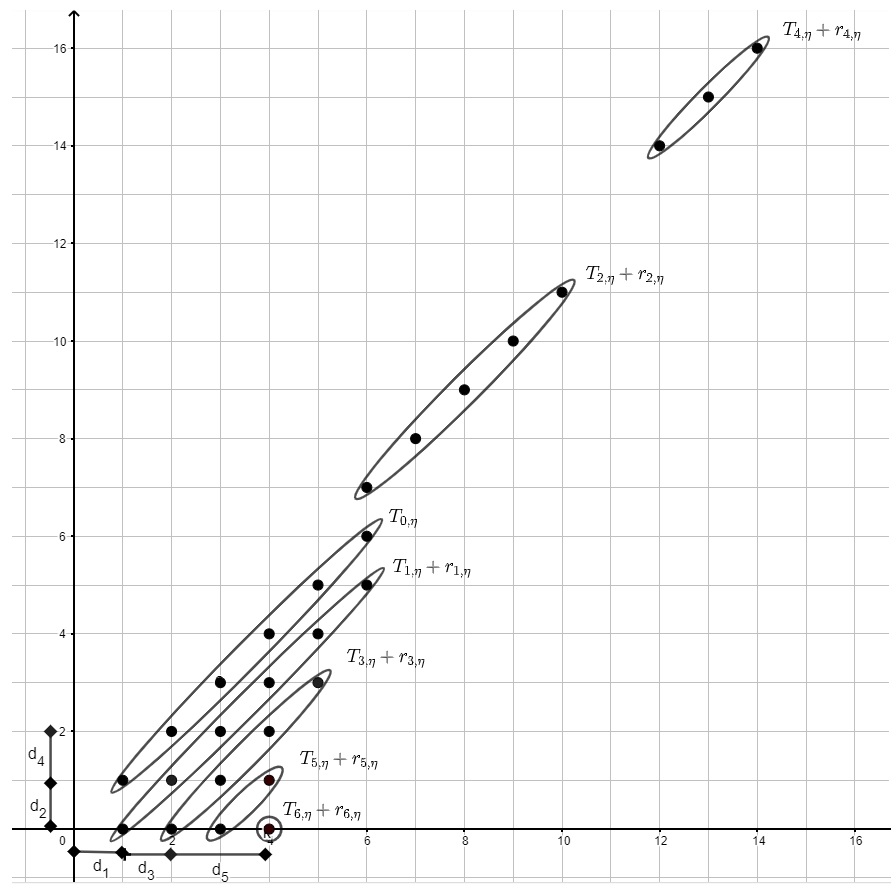}
\end{figure}

\end{exam}

\begin{rem}\label{unicidadbeta}
Recall notation \ref{notacion}. Let $\beta,\beta'\in\Lambda_{3,n}$ be such that $\beta\neq\beta'$. Then $A_{n}\beta\neq A_{n}\beta'$.
\end{rem}


\begin{pro}\label{corresomegasa}
For each $\eta\in\Omega$, there exists a unique $J_{\eta}\subset \Lambda_{3,n}$ such that $$A_{n}\cdot J_{\eta}:=\{A_{n}\cdot\beta\in\N^{2}\mid\beta\in J_{\eta}\}=T_{\eta}'.$$
Moreover, $J_{\eta}\in S_{A_{n}}$.
\end{pro}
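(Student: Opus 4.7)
The plan is to construct $J_\eta$ explicitly, element by element, and then verify $\det L^c_{J_\eta}\neq 0$ by factoring $L^c_{J_\eta}$ through the matrix $M_{J_\eta}$ whose columns are $\overline{A_n\beta}$, $\beta\in J_\eta$. Uniqueness of $J_\eta$ is immediate from Remark \ref{unicidadbeta}: since $A_n$ is injective on $\Lambda_{3,n}$, any subset $J\subset\Lambda_{3,n}$ with $A_n J=T'_\eta$ is determined by $T'_\eta$. For existence, I write down the preimage of each $v\in T'_\eta$: for $v=(k,k)\in T_{0,\eta}$ take $\beta=(0,k,0)$; for $v\in T_{j,\eta}+r_{j,\eta}$ with $v_{j,\eta}=(l,0)$ (so $r_{j,\eta}=0$ and $v=(l+p,p)$ for some $0\le p\le n-j$) take $\beta=(l,p,0)$; and for $v_{j,\eta}=(0,l)$ (so $r_{j,\eta}=nl$ and $v=(nl+p,(n+1)l+p)$) take $\beta=(0,p,l)$. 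A direct computation in each case gives $A_n\beta=v$, and the inequality $l\le j$ from Lemma \ref{rem2}.3 combined with $p\le n-j$ yields $1\le|\beta|\le n$. Since the three families of points sit on pairwise distinct diagonals $\pi_2-\pi_1$, the union $T'_\eta$ is disjoint and $|J_\eta|=|T'_\eta|=\lambda_{2,n}$.

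The key structural observation needed in the second half is that $J_\eta$ is an order ideal of $\Lambda_{3,n}$: if $\gamma\le\beta\in J_\eta$ and $|\gamma|\ge1$, then $\gamma\in J_\eta$. I verify this by case analysis on the three types of $\beta$ above, using Lemma \ref{rem2}.5 to produce, for any smaller axis value $(l',0)$ or $(0,l')$ appearing in $\gamma$, an index $j'\le j$ with the same axis value; the inequality $p'\le p\le n-j\le n-j'$ then places $\gamma$ inside the corresponding block of the construction. Granted the order-ideal property, the identity
\[
c_\beta \;=\; \overline{A_n\beta} + \sum_{\substack{\gamma<\beta\\ \gamma\in\Lambda_{3,n}}}(-1)^{|\beta-\gamma|}\binom{\beta}{\gamma}\,\overline{A_n\gamma}
\]
shows that, under any linear extension of $\le$ on $J_\eta$, the change-of-basis matrix from the columns of $M_{J_\eta}$ to the columns of $L^c_{J_\eta}$ is lower triangular with ones on the diagonal, so $\det L^c_{J_\eta}=\det M_{J_\eta}$.

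It remains to see that $M_{J_\eta}$ is invertible. Its columns are exactly $\{\bar v : v\in T'_\eta\}$. Proposition \ref{gene}, applied with $l=n$ and $r_i=r_{i,\eta}$, gives $\spn_\C\{\bar v : v\in T'_\eta\}=\spn_\C\{\bar v : v\in T_\eta\}$; Proposition \ref{indepen} says this span has dimension $\lambda_{2,n}$; and $|T'_\eta|=\lambda_{2,n}$, so the set is a basis and $\det M_{J_\eta}\neq 0$. Combined with the triangular reduction this yields $J_\eta\in S_{A_n}$.

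The main obstacle is the bookkeeping in the order-ideal verification, which must handle both axes in parallel and use Lemmas \ref{rem2}.5--6 in the correct configuration. The other steps are direct computations or direct applications of results already established in Sections 1 and 2.
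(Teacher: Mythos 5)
Your proposal is correct and follows essentially the same route as the paper: the same explicit preimages $(0,t,0)$, $(q,s,0)$, $(0,s,q)$ for the three types of points of $T'_\eta$, uniqueness from Remark \ref{unicidadbeta}, reduction of $L^c_{J_\eta}$ to the matrix of the $\overline{A_n\beta}$'s via the fact that every $\gamma<\beta$ in $\Lambda_{3,n}$ again lies in $J_\eta$ (the paper performs this as row operations ordered lexicographically, you package it as a unitriangular change of basis, which is the same computation), and finally invertibility from Propositions \ref{gene} and \ref{indepen}. Your explicit statement and verification of the order-ideal property via Lemma \ref{rem2}.5 is exactly the step the paper asserts more tersely, so no substantive difference remains.
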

\begin{proof}
We need to show that for each $v\in T_{\eta}'$, there exist an unique element $\beta\in\Lambda_{3,n}$ such that $A_{n}\beta=v$. The uniqueness comes from remark \ref{unicidadbeta}.

Now, let $v\in T_{\eta}'$. Then $v\in T_{\eta,0}$ or $v\in\cup_{i=1}^{n}T_{\eta,i}+r_{\eta,i}$. For the first case we have that $v_{0}=(t,t)$ with $t\leq n$. In this case we take $\beta=(0,t,0)$. For the second case we have 
$$v=v_{i,\eta}+(s,s)+r_{i,\eta}(1,1)=v_{\eta,i}+(s,s)+n\pi_{2}(v_{i,\eta})(1,1),$$
where $s\leq n-i$. By definition \ref{Teta}, $v_{i,\eta}=(q,0)$ or $v_{i,\eta}=(0,q)$, where $q\leq i$. Then 
$$v=(q+s,s)\,\,\,\,\,or\,\,\,\,\, v=(nq+s,(n+1)q+s).$$
 For these we take $\beta=(q,s,0)$ and $\beta=(0,s,q)$ respectively. Using the previous inequalities, we obtain that $\beta\in\Lambda_{3,n}$.
 
 Now we have to see that $J_{\eta}\in S_{A_{n}}$. Since $\lambda_{2,n}=|T_{\eta}|=|T'_{\eta}|=|J_{\eta}|$, we only have to see that $\det L_{J_{\eta}}^{c}\neq0$. Let $\{\be_{1},\be_{2},\ldots,\be_{\lambda_{2,n}}\}=J_{\eta}$ be such that $\be_{1}\prec\be_{2}\prec\cdots\prec\be_{\lambda_{2,n}}$, where $\prec$ denotes the lexicographic order. Notice that if $\be'<\be$ (see notation \ref{notacion}), then $\be'\prec\be$. By definition of $L_{J_{\eta}}^{c}$, we need to check that 
 $$\det\Big( \sum_{\ga\leq\be_{i}}(-1)^{|\be_{i}-\ga|}\binom{\be_{i}}{\ga}\overline{A_{n}\ga}\Big)_{1\leq i\leq\lambda_{2,n}}\neq0.$$
 For this, first we turn the previous matrix into $\left(
{\begin{array}{c}
 \overline{A\be_{i}} \\
\end{array}}
\right )_{1\leq i\leq \lambda_{2,n}}$ using elementary row operations. This implies the result since $A_{n}J_{\eta}=T'_{\eta}$ and $\{\overline{v}\in\Z^{\lambda_{2,n}}|v\in T_{\eta}'\}$ is linearly independent by proposition \ref{gene} and proposition \ref{indepen}.

Fix the $\lambda_{2,n}$-row. Consider $\be_{\lambda_{2,n}}\succ\ga_{1,\lambda_{2,n}}\succ\cdots\succ\ga_{r_{\lambda_{2,n}},\lambda_{2,n}}$, where $\{\ga_{i,\lambda_{2,n}}\}_{i=1}^{\lambda_{2,n}}=\{\ga\in\Lambda_{3,n}|\ga<\be_{\lambda_{2,n}}\}$. We can write this row as the sum 
 \begin{multline*}\overline{A_{n}\be_{\lambda_{2,n}}}+(-1)^{|\be_{\lambda_{2,n}}-\ga_{1,\lambda_{2,n}}|}\binom{\be_{\lambda_{2,n}}}{\ga_{1,\lambda_{2,n}}}\overline{A_{n}\ga_{1,\lambda_{2,n}}}+\\
 \cdots+(-1)^{|\be_{\lambda_{2,n}}-\ga_{r_{\lambda_{2,n}},\lambda_{2,n}}|}\binom{\be_{\lambda_{2,n}}}{\ga_{r_{\lambda_{2,n}},\lambda_{2,n}}}\overline{A_{n}\ga_{r_{\lambda_{2,n}},\lambda_{2,n}}},
      \end{multline*}

 Since $A_{n}\be_{\lambda_{2,n}}\in T_{\eta}'$ we have that $\be_{\lambda_{2,n}}$ have the shape $(q,s,0)$ or $(0,s,q)$ with $s+q\leq n$ and $A_{n}\be_{\lambda_{2,n}}$ equals one of $(q+s,s)$ or $(nq+s,(n+1)q+s)$.
Since $\ga_{1,\lambda_{2,n}}<\be_{\lambda_{2,n}}$, we obtain that $\ga_{1,\lambda_{2,n}}$ have the shape $(q',s',0)$ or $(0,s',q')$ with $s'<s$ or $q'<q$. Thus, $A_{n}\ga_{1,\lambda_{2,n}}$ have the shape $(q'+s',s')$ or $(nq'+s',(n+1)q'+s')$. In any case, we have that $A_{n}\ga_{1,\lambda_{2,n}}\in T'_{\eta}$. 

By the first part of the proposition, we have that $\ga_{1,\lambda_{2,n}}=\be_{i}$, for some $i<\lambda_{2,n}$. Then we subtract $(-1)^{|\be_{\lambda_{2,n}}-\ga_{1,\lambda_{2,n}}|}\binom{\be_{\lambda_{2,n}}}{\ga_{1,\lambda_{2,n}}}$-times the row $i$ to the row $\lambda_{2,n}$ in the matrix $L^{c}_{J_{\eta}}$. Notice that if $\ga<\be_{i}$, we have $\ga<\be_{\lambda_{2,n}}$. Thus we obtain that
$$\overline{A_{n}\be_{\lambda_{2,n}}}+c_{2}\overline{A_{n}\ga_{2,\lambda_{2,n}}}+\cdots+c_{r_{\lambda_{2,n}}}\overline{A_{n}\ga_{r_{\lambda_{2,n}},\lambda_{2,n}}}$$
 is the new $\lambda_{2,n}$-row, for some constants $\{c_{2},\ldots,c_{r_{\lambda_{2,n}}}\}\subset\Z$. Applying the same argument for each $\ga_{i,\lambda_{2,n}}$ in a increasing way, we turn the $\lambda_{2,n}$-th row into $\overline{A_{n}\be_{\lambda_{2,n}}}$.
 
 Applying this process to the other rows of $L_J^c$ in an ascending way we obtain the matrix
 $$\left(
{\begin{array}{c}
 \overline{A\be_{i}} \\
\end{array}}
\right )_{1\leq i\leq \lambda_{2,n}}.$$
 
 
\end{proof}




\begin{subsection}{A distinguished element of $S_{A_{n}}$}

Let $k\in\{1,\ldots,n\}$. Consider the function
\begin{align*}
  f_{k}: & \N^{2}\rightarrow \Z \\
  & v\mapsto \langle (k,1-k),v\rangle.
\end{align*}

\begin{defi}\label{minimum}Let $n\in\N\setminus\{0\}$, $1\leq k\leq n$ and $d_{k,0}=0$. If $f_{k}((1,0))\leq f_{k}((n,n+1))$, we take $z_{k}=1$. If $f_{k}((n,n+1))<f_{k}((1,0))$, we take $z_{k}=0$.

Now, we define $d_{k,l}$ for $l>0$ in an iterative way. Let
$$d_{k,l}=\min\{n-\sum_{j=0}^{l-1}d_{k,j},t_{l}-s_{l}\},$$
 where\small
   $$t_{l}= \left\{ \begin{array}{lcc}
                    \max\{m\in\N\mid m\cdot f_{k}((1,0))\leq f_{k}((\sum_{j\,\,even}^{l-1}d_{k,j}+1)(n,n+1))\} & \\
                    if\,\,\,\,z_{k}=1\,\,\, and\,\,\, l\,\,\,\mbox{odd}. &\\
             \\ \max\{m\in\N\mid m\cdot f_{k}((n,n+1))\leq f_{k}((\sum_{j\,\,odd}^{l-1}d_{k,j}+1)(1,0))\} &  \\
             if\,\,\,\,z_{k}=1\,\,\, and\,\,\, l\,\,\,\mbox{even}, &\\
             \\ \max\{m\in\N\mid m\cdot f_{k}((n,n+1))\leq f_{k}((\sum_{j\,\,even}^{l-1}d_{k,j}+1)(1,0))\} &  \\
             if\,\,\,\,z_{k}=0\,\,\, and\,\,\, l\,\,\,\mbox{odd}, &\\
            \\ \max\{m\in\N\mid m\cdot f_{k}((1,0))\leq f_{k}((\sum_{j\,\,odd}^{l-1}d_{k,j}+1)(n,n+1))\} &  \\ 
            if\,\,\,\,z_{k}=0\,\,\, and\,\,\, l\,\,\,\mbox{even}, &\\
             \end{array}
   \right.$$
   \normalsize
   and
   $$s_{l}= \left\{ \begin{array}{lcc}
                0 & if & l=1,\\
                \sum_{j\,\,odd}^{l-1}d_{k,j} &   if  &  l\,\,\,\mbox{odd}\,\,\,\mbox{and}\,\,\,l>1,\\
               \\  \sum_{j\,\,even}^{l-1}d_{k,j} &   if  &  l\,\,\,\mbox{even}.\\
            
             \end{array}
   \right.$$
   
   If $\sum_{j=1}^{l}d_{k,j}<n$, we define $d_{k,l+1}$. In other case, we finish the process and we define $\eta_{k}=(z_{k},d_{k,0},\ldots,d_{k,r})$. 

\end{defi}

\begin{exam}Let $n=6$ and $k=3$. We have that $d_{3,0}=0$. On the other hand, we have
$$f_{3}((1,0))=3<4=f_{3}((6,7)).$$
Then $z_{3}=1$. For $l=1$, we have that $$t_{1}=\max\{m\in\N\mid m\cdot 3=m\cdot f_{3}((1,0))\leq f_{3}((6,7))=4\}=1,$$
 and $s_{1}=0$. Then 
 $$d_{3,1}=\min\{6,1-0\}=1.$$
Now we computed $d_{3,2}$. By definition
 $$t_{2}=\max\{m\in\N\mid m\cdot 4 =m\cdot f_{3}((6,7))\leq f_{5}(2(1,0))=6\}=1,$$
 and $s_{2}=0$. This implies that 
 $$d_{3,2}=\min\{6-2=4,1-0\}=1.$$
In an analogous way we obtain that $d_{3,3}=1$ and $d_{3,4}=1$. Now we computed $d_{3,5}$. We have that
 $$t_{5}=\max\{m\in\N\mid m\cdot 3=m\cdot f_{3}((1,0))\leq f_{5}(3(6,7))=12\}=4,$$
and $s_{3}=d_{3,1}+d_{3,3}=2$. Then 
$$d_{3,5}=\min\{6-1-1-1-1=2,4-2=2\}=2.$$
Since $n-\sum_{j=0}^{3}d_{k,j}=6-2-1-3=0$ we finish the process. Thus $\eta_{5}=(1,0,1,1,1,1,2)$.

\end{exam}
\begin{lem}\label{propetak}Let $1\leq k\leq n$ and $\eta_{k}$ be as in definition \ref{minimum}. Then we have the following properties:
\begin{itemize}\item[1)]$d_{k,l}>0$ for all $l\in\{1,\ldots,r\}$. In particular, $\eta_{k}\in\Omega$. 
\item[2)]For each $i\in\{1,\ldots,n\}$, let $l\in\{1,\ldots,r\}$ be the unique element such that $\sum_{j=0}^{l-1}d_{k,j}<i\leq\sum_{j=1}^{l}d_{k,j}$. Then, we have the following inequalities:
\small
$$\begin{array}{lcc}
                f_{k}(v_{i,\eta_{k}})+r_{i,\eta_{k}}\leq f_{k}((\sum_{j\,\,even}^{l}d_{k,j}+1)(n,n+1)) &   if  \,\,\,z_{k}=1\,\,\, and\,\,\, l\,\,\mbox{odd},\\
             \\ f_{k}(v_{i,\eta_{k}})+r_{i,\eta_{k}}\leq f_{k}((\sum_{j\,\,odd}^{l}d_{k,j}+1)(1,0)) &   if  \,\,\, z_{k}=1\,\,\, and\,\,\, l\,\,\mbox{even},\\
             \\ f_{k}(v_{i,\eta_{k}})+r_{i,\eta_{k}}\leq f_{k}((\sum_{j\,\,even}^{l}d_{k,j}+1)(1,0)) &   if  \,\,\, z_{k}=0\,\,\, and\,\,\, l\,\,\mbox{odd},\\
            \\ f_{k}(v_{i,\eta_{k}})+r_{i,\eta_{k}}\leq f_{k}((\sum_{j\,\,odd}^{l}d_{k,j}+1)(n,n+1)) &   if  \,\,\, z_{k}=0\,\,\, and\,\,\, l\,\,\mbox{even}.\\
             \end{array}$$
             \normalsize
    \item[3)] Let $i',i\in\N\setminus\{0\}$ be such that $\sum_{j=0}^{l-1}d_{k,j}<i<i'\leq\sum_{j=0}^{l}d_{k,j},$
    for some $l\in\{1,\ldots,r\}$. Then 
    $$f_{k}(v_{i,\eta_{k}})+r_{i,\eta_{k}}\leq f_{k}(v_{i',\eta_{k}})+r_{i',\eta_{k}}.$$
    
    
    \item[4)]For all $1\leq i< i'\leq n$, we have that 
        $$f_{k}(v_{i,\eta_{k}}+r_{i,\eta_{k}}(1,1))\leq f_{k}(v_{i',\eta_{k}}+r_{i',\eta_{k}}(1,1)).$$

    \item[5)] If $l>2$ and $f_{k}(v_{l,\eta_{k}}+r_{l,\eta_{k}}(1,1))=f_{k}(v_{l-1,\eta_{k}}+r_{l-1,\eta_{k}}(1,1))$, then $f_{k}(v_{l,\eta_{k}}+r_{l,\eta_{k}}(1,1))\geq f_{k}(v_{l-2,\eta_{k}}+r_{l-2,\eta_{k}}(1,1))+2$         
\end{itemize}
\end{lem}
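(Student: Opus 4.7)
Write $A := f_{k}((1,0)) = k$ and $B := f_{k}((n,n+1)) = n+1-k$; both are positive integers and $z_{k}=1$ if and only if $A\le B$. The four combinations of (odd/even block) with ($z_{k}\in\{0,1\}$) are mutually symmetric, so I will run the argument assuming $z_{k}=1$ together with the odd-block sub-case wherever a choice must be made; the other three reductions are identical after swapping the roles of $A$ and $B$ and of even/odd indices.

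For item~(1) I induct on $l$. The base $t_{1}=\max\{m\mid mA\le B\}\ge 1$ holds because $A\le B$. For the step, if $d_{k,l}$ is defined with $l\ge 2$ then $\sum_{j=1}^{l-1}d_{k,j}<n$ forces $d_{k,l-1}=t_{l-1}-s_{l-1}$, and maximality of $t_{l-1}$ gives $(t_{l-1}+1)B>(s_{l}+1)A$; after identifying $\sum_{j\text{ even}, j<l}d_{k,j}=t_{l-1}$ (using that $l-1$ is even) this rearranges to $t_{l}\ge s_{l}+1$, so $d_{k,l}\ge 1$. Item~(2) is a direct unfolding: for $l$ odd the relevant vector is $(\sum_{j\text{ odd},j<l}d_{k,j}+c,0)$ with $r_{i,\eta_{k}}=0$, hence $f_{k}(v_{i,\eta_{k}})+r_{i,\eta_{k}}=A(\sum_{j\text{ odd},j<l}d_{k,j}+c)$; the bound $c\le d_{k,l}\le t_{l}-s_{l}$ combined with the definition of $t_{l}$ and the parity identity $\sum_{j\text{ even},j<l}=\sum_{j\text{ even},j\le l}$ is exactly the inequality claimed. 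For~(3), within block $l$ both the direction of $v_{i,\eta_{k}}$ and the shift $r_{i,\eta_{k}}$ are constant, so $f_{k}(v+r(1,1))$ restricted to the block is linear in the internal index $c$ with slope $A$ or $B$; since $1\le k\le n$ both slopes are strictly positive, and in particular the inequality in~(3) is actually \emph{strict} whenever $i<i'$. Item~(4) follows by combining this with~(2) at block transitions: the upper bound in~(2) for the last index of block $l$ equals, by direct substitution, the value of $f_{k}(v+r(1,1))$ at the first index of block $l+1$.

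Item~(5) is the main obstacle. The crucial preliminary is that if the greedy process produces $r\ge 2$ blocks then $2\le k\le n-1$: for $k=1$ one has $A=1\le n=B$ and $t_{1}=n$, which forces $d_{k,1}=n$ and hence $r=1$, and the case $k=n$ is symmetric. Therefore whenever~(5) is non-vacuous both $A\ge 2$ and $B\ge 2$. Next, the strict form of~(3) rules out within-block equalities, so the hypothesis $f_{k}(v_{l,\eta_{k}}+r_{l,\eta_{k}}(1,1))=f_{k}(v_{l-1,\eta_{k}}+r_{l-1,\eta_{k}}(1,1))$ forces $l-1$ to be the last index of some block $t$ and $l$ the first index of block $t+1$.

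I then split according to the length of block $t$. If $d_{k,t}\ge 2$ then $l-2$ also lies in block $t$, so the within-block jump $f_{k}(v_{l-1,\eta_{k}}+r_{l-1,\eta_{k}}(1,1))-f_{k}(v_{l-2,\eta_{k}}+r_{l-2,\eta_{k}}(1,1))$ equals either $A$ or $B$, hence is $\ge 2$. If instead $d_{k,t}=1$ then $l-2$ is the last index of block $t-1$; writing the transition equality (say, for $z_{k}=1$ and $t$ odd) as $kp=(n+1-k)(q+1)$ with $p=\sum_{j\text{ odd},\le t}d_{k,j}$ and $q=\sum_{j\text{ even},\le t-1}d_{k,j}$, and subtracting $f_{k}(v_{l-2,\eta_{k}}+r_{l-2,\eta_{k}}(1,1))=(n+1-k)q$ from $f_{k}(v_{l-1,\eta_{k}})=kp$, one obtains
\[
f_{k}(v_{l-1,\eta_{k}}+r_{l-1,\eta_{k}}(1,1))-f_{k}(v_{l-2,\eta_{k}}+r_{l-2,\eta_{k}}(1,1))=kp-(n+1-k)q=n+1-k\ge 2.
\]
The parity-reversed sub-case (block $t$ even) produces $A\ge 2$ in exactly the same way, and the two sub-cases for $z_{k}=0$ are strictly analogous, completing the proof of~(5).
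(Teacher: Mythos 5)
Your argument is correct and follows essentially the same route as the paper: the same reduction to one of the four $(z_{k},\mbox{parity})$ cases, the same maximality-of-$t_{l-1}$ argument for (1), the unfolding of definition \ref{minimum} for (2), per-block linearity for (3), chaining (2) with (3) at block transitions for (4), and for (5) the reduction to $2\leq k\leq n-1$ followed by a two-case analysis that coincides with the paper's split on whether $v_{l-2,\eta_{k}}$ lies on the same axis as $v_{l-1,\eta_{k}}$ (your $d_{k,t}\geq 2$ versus $d_{k,t}=1$ cases). One small slip to fix in (3): the shift $r_{i,\eta_{k}}=n\cdot\pi_{2}(v_{i,\eta_{k}})$ is \emph{not} constant inside a vertical block (it grows by $n$ at each step), and it is precisely this growth that turns the per-step increment from $1-k$ into $n+1-k$; since the slopes you actually use are the correct values $A=k$ and $B=n+1-k$ (as in the paper's computation), nothing downstream is affected.
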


\begin{proof}\begin{itemize}
\item[1)]
By construction $n-\sum_{j=0}^{l-1}d_{k,j}>0$. Then, by definition \ref{minimum}, we only have to check that $t_{l}-s_{l}>0$.
Notice that by definition, $t_{1}>0$ and $s_{1}=0$. This implies that is true for $l=1$. Now suppose that $l>1$.

We have four cases: $z_{k}=1$ and $l$ odd; $z_{k}=1$ and $l$ even; $z_{k}=0$ and $l$ odd; $z_{k}=1$ and $l$ even. Consider $z_{k}=1$ and $l$ odd. By definition of $t_{l-1}$
$$f_{k}((t_{l-1}+1)(n,n+1))>f_{k}((\sum_{j\,\,odd}^{l-2}d_{k,j}+1)(1,0)).$$
Since $l$ is odd, $\sum_{j\,\,odd}^{l-2}d_{k,j}=\sum_{j\,\,odd}^{l-1}d_{k,j}$. It follows that 
$$f_{k}((\sum_{j\,\,odd}^{l-2}d_{k,j}+1)(1,0))=f_{k}((\sum_{j\,\,odd}^{l-1}d_{k,j}+1)(1,0))=f_{k}((s_{l}+1)(1,0)).$$

On the other hand, notice that if $d_{k,l-1}=n-\sum_{j=0}^{l-2}d_{k.j}$, then $n=\sum_{j=0}^{l-1}d_{k,j}$ and so there is no $d_{k,l}$, which is a contradiction. This implies that $d_{k,l-1}=t_{l-1}-s_{l-1}$. Thus 
$$f_{k}((t_{l-1}+1)(n,n+1))=f_{k}((d_{k,l-1}+s_{l-1}+1)(n,n+1)).$$
Since $l-1$ is even and $s_{l-1}=\sum_{j\,\, even}^{l-2}d_{k,j}$, we have that $d_{k,l-1}+s_{l-1}=\sum_{j\,\, even}^{l-1}d_{k,j}$. Then 
$$f_{k}((\sum_{j\,\, even}^{l-1}d_{k,j}+1)(n,n+1))>f_{k}((s_{l}+1)(1,0)).$$
By definition of $t_{l}$, we obtain that $t_{l}\geq s_{l}+1$ and so $t_{l}-s_{l}>0$. The other three cases are analogous.
\item[2)]Let $i\in\{1,\ldots,n\}$ and $l\in\{1,\ldots,r\}$. We have four cases: $z_{k}=1$ and $l$ odd; $z_{k}=1$ and $l$ even; $z_{k}=0$ and $l$ odd; $z_{k}=1$ and $l$ even. Suppose that $z_{k}=1$ and $l$ odd. In this case, by definition, $v_{i,\eta_{k}}=(\sum_{j\,\,odd}^{l-1}d_{k,j}+c)(1,0)$ with $c\leq d_{k,l}$, $r_{i,\eta_{k}}=0$ and $s_{l}=\sum_{j\,\,odd}^{l-1}d_{k,j}$. Then
$$\sum_{j\,\,odd}^{l-1}d_{k,j}+c\leq \sum_{j\,\,odd}^{l-1}d_{k,j}+d_{k,l}=s_{l}+d_{k,l}\leq t_{l}.$$
By definition of $t_{l}$, we have that

 $$   f_{k}(v_{i,\eta_{k}})+r_{i,\eta_{k}}=
    f_{k}((\sum_{j\,\,odd}^{l-1}d_{k,j}+c)(1,0))\leq
    f_{k}((\sum_{j\,\,even}^{l-1}d_{k,j}+1)(n,n+1)).$$
Since $l$ is odd, we have that $\sum_{j\,\,even}^{l-1}d_{k,j}=\sum_{j\,\,even}^{l}d_{k,j}$. This implies the inequality that we need.

Now suppose that $z_{k}=1$ and $l$ even. In this case, by definition, $v_{i,\eta_{k}}=(\sum_{j\,\,even}^{l-1}d_{k,j}+c)(0,1)$ with $c\leq d_{k,l}$, $r_{i,\eta}=n(\sum_{j\,\,even}^{l-1}d_{k,j}+c)$ and $s_{l}=\sum_{j\,\,even}^{l-1}d_{k,j}$. Using the above and the linearity of $f_{k}$ we obtain
\begin{align*}
    f_{k}(v_{i,\eta_{k}})+r_{i,\eta_{k}}&=
     f_{k}(v_{i,\eta_{k}})+f_{k}(r_{i,\eta_{k}}(1,1))\\
     &=f_{k}(v_{i,\eta_{k}}+r_{i,\eta_{k}}(1,1))\\
     &=f_{k}((\sum_{j\,\,even}^{l-1}d_{k,j}+c)(n,n+1)).
\end{align*}
Since
$$\sum_{j\,\,even}^{l-1}d_{k,j}+c\leq \sum_{j\,\,even}^{l-1}d_{k,j}+d_{k,l}\leq t_{l},$$
by definition of $t_{l}$, we obtain the inequality 
$$f_{k}(v_{i,\eta_{k}})+r_{i,\eta_{k}}=f_{k}((\sum_{j\,\,even}^{l-1}d_{k,j}+c)(n,n+1))\leq f_{k}((\sum_{j\,\,odd}^{l-1}d_{k,j}+1)(1,0)).$$

Since $l$ is even, we have that $\sum_{j\,\,odd}^{l-1}d_{k,j}=\sum_{j\,\,odd}^{l}d_{k,j}$, obtaining the result.

The other two cases are analogous.
\item[3)] The hypothesis implies that $i=\sum_{j=0}^{l-1}d_{k,j}+c_{i}$ and $i'=\sum_{j=0}^{l-1}d_{k,j}+c_{i'}$, where $0<c_{i}<c_{i'}\leq d_{k,l}$. We have four cases: $z_{k}=1$ and $l$ odd; $z_{k}=1$ and $l$ even; $z_{k}=0$ and $l$ odd; $z_{k}=1$ and $l$ even. Consider $z_{k}=1$ and $l$ even. By definition \ref{Teta}, $v_{i,\eta_{k}}=(0,\sum_{j\,\,even}^{l-1}d_{k,j}+c_{i})$ and $v_{i',\eta_{k}}=(0,\sum_{j\,\,even}^{l-1}d_{k,j}+c_{i'}).$ Then
\begin{align*}
    f_{k}(v_{i,\eta_{k}})+r_{i,\eta_{k}}&=
    (1-k)(\sum_{j\,\,even}^{l-1}d_{k,j}+c_{i})+n(\sum_{j\,\,even}^{l-1}d_{k,j}+c_{i})\\
    &=(n-k+1)(\sum_{j\,\,even}^{l-1}d_{k,j})+(n-k+1)c_{i}\\
     &\leq(n-k+1)(\sum_{j\,\,even}^{l-1}d_{k,j})+(n-k+1)c_{i'}\\
     &= (1-k)(\sum_{j\,\,even}^{l-1}d_{k,j}+c_{i'})+n(\sum_{j\,\,even}^{l-1}d_{k,j}+c_{i'})\\
      &=f_{k}(v_{i',\eta_{k}})+r_{i',\eta_{k}}.
\end{align*}
The other three cases are analogous.

\item[4)] Let $1\leq i< i'\leq n$. Let $1\leq l\leq l'\leq r$ be such that $i=\sum_{j=0}^{l-1}d_{k,j}+c_{i}$ and $i'=\sum_{j=0}^{l'-1}d_{k,j}+c_{i'}$. By hypothesis, we have that $l\leq l'$. If $l=l'$, the result follows from $3)$. Suppose that $l<l'$, this implies that $l'=l+c$ with $c>0$.  

We have four cases ($z_{k}=1$ and $l$ odd; $z_{k}=1$ and $l$ even; $z_{k}=0$ and $l$ odd; $z_{k}=1$ and $l$ even). Consider $z_{k}=0$ and $l$ even. By definition \ref{Teta}, we have that $v_{i,\eta_{k}}=(\sum_{j\,\,even}^{l-1}d_{k,j}+c_{i},0)$. By $2)$, we have that
$$f_{k}(v_{i,\eta_{k}})+r_{i,\eta_{k}}\leq f_{k}((\sum_{j\,\,odd}^{l}d_{k,j}+1)(n,n+1)).$$
On the other hand, by definition \ref{Teta}, we obtain that $v_{\sum_{j=0}^{l}d_{k,j}+1,\eta_{k}}=(0,\sum_{j\,\,odd}^{l}d_{k,j}+1)$.
Then

\begin{align*}
    f_{k}(v_{\sum_{j=0}^{l}d_{k,j}+1,\eta_{k}})+r_{\sum_{j=0}^{l}d_{k,j}+1,\eta_{k}}=\\
    f_{k}((0,\sum_{j\,\,odd}^{l}d_{k,j}+1)+n\cdot(\sum_{j\,\,odd}^{l}d_{k,j}+1)(1,1))=\\
    f_{k}((\sum_{j\,\,odd}^{l}d_{k,j}+1)(n,n+1))\geq \\
    f_{k}(v_{i,\eta_{k}})+r_{i,\eta_{k}}.
\end{align*}

Now, if $c>1$, by $2)$ and knowing that $l+1$ is odd, we obtain that 
$$ f_{k}(v_{\sum_{j=0}^{l}d_{k,j}+1,\eta_{k}})+r_{\sum_{j=0}^{l}d_{k,j}+1,\eta_{k}}\leq f_{k}((\sum_{j\,\,even}^{l+1}d_{k,j}+1)(1,0)).$$
In addition, using definition \ref{Teta}, we have the vector $v_{\sum_{j=0}^{l+1}d_{k,j}+1,\eta_{k}}=(\sum_{j\,\,even}^{l+1}d_{k,j}+1,0).$
Then 
\begin{align*}
    f_{k}(v_{\sum_{j=0}^{l}d_{k,j}+1,\eta_{k}})+r_{\sum_{j=0}^{l}d_{k,j}+1,\eta_{k}}\leq\\
     f_{k}((\sum_{j\,\, even}^{l+1}d_{k,j}+1)(1,0))=\\
      f_{k}(v_{\sum_{j=0}^{l+1}d_{k,j}+1,\eta_{k}})=\\
      f_{k}(v_{\sum_{j=0}^{l+1}d_{k,j}+1,\eta_{k}})+r_{\sum_{j=0}^{l+1}d_{k,j}+1,\eta_{k}}.
\end{align*}
Repeating this argument $c$ times, we obtain 

\begin{align*}
    f_{k}(v_{i,\eta_{k}})+r_{i,\eta_{k}}& \leq
    f_{k}(v_{\sum_{j=0}^{l}d_{k,j}+1,\eta_{k}})+r_{\sum_{j=0}^{l}d_{k,j}+1,\eta_{k}}\\
    &\leq f_{k}(v_{\sum_{j=0}^{l+1}d_{k,j}+1,\eta_{k}})+r_{\sum_{j=0}^{l+1}d_{k,j}+1,\eta_{k}}\\
    \vdots \\
    &\leq f_{k}(v_{\sum_{j=0}^{l'-1}d_{k,j}+1\eta_{k}})+r_{\sum_{j=0}^{l'-1}d_{k,j}+1\eta_{k}}\\
    & \leq f_{k}(v_{i',\eta_{k}})+r_{i',\eta_{k}},
    \end{align*}
where the last inequality comes from $2)$. The other cases are analogous.  

\item[5)] Notice that if $k=n$, $f_{n}(t(n,n+1))=t$ for all $t\in\{1,\ldots, n\}$ and $f_{n}((1,0))=n$. Then by definition \ref{minimum}, $\eta_{n}=(0,0,n)$, i.e., $v_{j,\eta_{n}}=(0,j)$ for all $j\in\{1,\ldots,n\}$. In particular, $f_{n}(v_{i,\eta_{n}}+r_{i,\eta_{n}})<f_{n}(v_{j,\eta_{n}}+r_{j,\eta_{n}})$ if $1\leq i<j\leq n$,. Then we cannot have the conditions of lemma. Analogous, if $k=1$, $\eta_{1}=(1,0,n)$, and $f_{1}(v_{i,\eta_{1}}+r_{i,\eta_{1}})<f_{1}(v_{j,\eta_{1}}+r_{j,\eta_{1}})$, for all $1 \leq i<j\leq n$. This implies that if there exists $l\in \{1,\ldots, n\}$ such that $f_{k}(v_{l,\eta_{k}}+r_{l,\eta_{k}})=f_{k}(v_{l-1,\eta_{k}}+r_{l-1,\eta_{k}})$, we have that $k\in\{2,\ldots,n-1\}$.

Now, suppose that there exist $l\in \{1,\ldots, n\}$ such that $f_{k}(v_{l,\eta_{k}}+r_{l,\eta_{k}})=f_{k}(v_{l-1,\eta_{k}}+r_{l-1,\eta_{k}})$. If $v_{l,\eta_{k}}=(0,s)$ and $v_{l-1,\eta_{k}}=(0,s-1)$, then 
\small
$$f_{k}(v_{l,\eta_{k}}+r_{l,\eta_{k}})=s(n-k+1)>(s-1)(n-k+1)=f_{k}(v_{l-1,\eta_{k}}+r_{l-1,\eta_{k}}).$$
\normalsize
In an analogous way, obtain a contradiction if $v_{l,\eta_{k}}=(t,0)$ and $v_{l-1,\eta_{k}}=(t-1,0)$. This implies that $v_{l,\eta_{k}}=(t,0)$ and $v_{l-1,\eta_{k}}=(0,s)$ or $v_{l,\eta_{k}}=(0,s)$ and $v_{l-1,\eta_{k}}=(t,0)$. Consider the first case, the other case is analogous. By definition
\begin{equation}\label{eq2}
    f_{k}(v_{l,\eta_{k}}+r_{l,\eta_{k}})=f_{k}((t,0))=f_{k}((0,s)+(ns,ns)),
\end{equation}

By $5)$ of lemma \ref{rem2}, we deduce that $v_{l-2,\eta_{k}}=(0,s-1)$ or $v_{l-2,\eta_{k}}=(t-1,0)$. Suppose that $v_{l-2,\eta_{k}}=(0,s-1)$. Then we have 
\begin{align*}
    f_{k}(v_{l,\eta_{k}}+r_{l,\eta_{k}})= & f_{k}((0,s)+(ns,ns)) \\
    = & f_{k}(s(n,n+1))\\
    = & s(n-k+1)\\
    = & (s-1)(n-k+1)+n-k+1\\
    = & f_{k}((s-1)(n,n+1))+n-k+1\\
    \geq & f_{k}(v_{l-2,\eta_{k}}+r_{l-2,\eta_{k}})+2,
\end{align*}
where the first equality comes from equation (\ref{eq2}) and the last inequality comes from $k\leq n-1$.

Now suppose that $v_{l-2,\eta_{k}}=(t-1,0)$. In an analogous way, we obtain that
\begin{align*}
    f_{k}(v_{l,\eta_{k}}+r_{l,\eta_{k}})= & f_{k}((t,0))\\
    = & k(t-1)+k\\
    \geq & f_{k}(v_{l-2,\eta_{k}}+r_{l-2,\eta_{k}})+2,
\end{align*}
where the first equality comes from equation (\ref{eq2}) and the last inequality comes from $k\leq 2$. Obtaining the result.
\end{itemize}

\end{proof}

The previous lemma will be constantly use in the rest of the section.

\begin{pro}\label{finalelem}
Let $k\{1,\ldots, n\}$. Let $\eta_{k}\in\Omega$ be from definition \ref{minimum}. Then $v_{n,\eta_{k}}=(0,k)$ or $v_{n,\eta_{k}}=(n-k+1,0)$.
\end{pro}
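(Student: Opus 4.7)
To prove this I will reduce the problem to tracking the parities of the blocks. Set $A_l := \sum_{j \leq l,\ j\text{ odd}} d_{k,j}$ and $B_l := \sum_{j \leq l,\ j\text{ even}} d_{k,j}$, so that $A_r + B_r = n$. A direct reading of Definition \ref{Teta} shows that when $z_k = 1$ one has $v_{n,\eta_k} = (A_r, 0)$ if $r$ is odd and $v_{n,\eta_k} = (0, B_r)$ if $r$ is even, with the two axes swapped when $z_k = 0$. Since $f_k((1,0)) = k$ and $f_k((n, n+1)) = n-k+1$, the values $t_l$ in Definition \ref{minimum} simplify, for $z_k = 1$, to $t_l = \lfloor (B_{l-1}+1)(n-k+1)/k\rfloor$ when $l$ is odd and $t_l = \lfloor (A_{l-1}+1)k/(n-k+1)\rfloor$ when $l$ is even. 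Hence the proposition amounts to showing $A_r = n-k+1$ when $r$ is odd and $B_r = k$ when $r$ is even (and symmetric statements when $z_k = 0$).

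The core of the proof is the following invariant, which I establish by induction on $l$ in the case $z_k = 1$: for every $l \leq r$ one has $A_l \leq n-k+1$ and $B_l \leq k$, and if moreover $l < r$ then $A_l \leq n - k$ and $B_l \leq k - 1$. The base $l = 0$ is immediate. For the inductive step at an even index $l < r$, the defining minimum forces $d_{k,l} = t_l - B_{l-1}$ (the alternative $d_{k,l} = n - (A_{l-1}+B_{l-1})$ would produce $A_l + B_l = n$ and hence $l = r$), so $B_l = t_l \leq (A_{l-1}+1)k/(n-k+1)$; plugging in the inductive bound $A_{l-1} \leq n-k$ yields $B_l \leq k$. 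The crucial step is to rule out $B_l = k$: that equality would force $A_{l-1} \geq n-k$ from the $t_l$-formula, hence $A_{l-1} = n-k$ exactly, and then $A_l + B_l = (n-k)+k = n$ triggers the wall and contradicts $l < r$. Thus $B_l \leq k - 1$. The odd case is handled symmetrically: $A_l = t_l \leq (B_{l-1}+1)(n-k+1)/k \leq n-k+1$, and the equality $A_l = n-k+1$ would force $B_{l-1} = k-1$ and hence $A_l + B_{l-1} = n$, the same contradiction.

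With this invariant in place the conclusion is immediate. At step $l = r$ the wall is hit and $A_r + B_r = n$. If $r$ is odd, $B_{r-1} \leq k - 1$ gives $A_r = n - B_{r-1} \geq n - k + 1$, which combined with $A_r \leq n - k + 1$ yields $A_r = n-k+1$, so $v_{n,\eta_k} = (n-k+1,0)$. If $r$ is even, $A_{r-1} \leq n - k$ gives $B_r = n - A_{r-1} \geq k$, combined with $B_r \leq k$ yielding $B_r = k$ and $v_{n,\eta_k} = (0, k)$. The case $z_k = 0$ is handled identically with the roles of $k$ and $n-k+1$ and of the two axes interchanged. I expect the main difficulty to be not any single estimate but the careful bookkeeping across the four subcases (parity of $r$ times value of $z_k$); the decisive observation in every case is that $B_l = k$ or $A_l = n-k+1$ at an interior step would automatically trigger the wall, and therefore can happen only at $l = r$.
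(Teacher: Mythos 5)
Your argument is correct, and it takes a genuinely different route from the paper's. The paper proves the statement by contradiction: using part 6) of Lemma \ref{rem2} it locates a premature occurrence $v_{m,\eta_k}\in\{(n-k+1,0),(0,k)\}$ with $m<n$, and then shows, case by case in $z_k$ and the parity of the block containing $m$, that this violates the inequality of part 2) of Lemma \ref{propetak}, since $f_k$ of that vector already equals the extremal value $k(n-k+1)$. You instead work directly from Definition \ref{minimum}: writing $A_l,B_l$ for the odd- and even-indexed partial sums of the $d_{k,j}$, you prove by induction the invariant $A_l\le n-k+1$, $B_l\le k$ for $l\le r$, together with the strict bounds $A_l\le n-k$, $B_l\le k-1$ for $l<r$ (equality would force the partial sum $A_l+B_l$ to reach $n$ and hence $l=r$), and then squeeze the last block to get $A_r=n-k+1$ or $B_r=k$ exactly, which by Definition \ref{Teta} is the desired form of $v_{n,\eta_k}$. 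The two proofs rest on the same arithmetic --- your bound $A_l\le(B_{l-1}+1)(n-k+1)/k$ is precisely what part 2) of Lemma \ref{propetak} records, specialized to the last index of block $l$ --- but your organization is self-contained (it never invokes Lemma \ref{propetak}) and arguably more transparent, since it pins down the terminal value of the sequence instead of refuting a premature occurrence. One small bookkeeping remark: your invariant is stated for all $l\le r$, but at $l=r$ one only has $d_{k,r}\le t_r-s_r$ rather than $d_{k,r}=t_r-s_r$; the weak bounds $A_r\le n-k+1$, $B_r\le k$ follow from this inequality alone, which is all the final squeeze needs, so the argument closes correctly.
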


\begin{proof}
Suppose is not true. By definition, we have that $\sum_{j=0}^{r}d_{k,j} =n$. Using lemma \ref{rem2} and since $v_{n,\eta_{k}}$ is not $(n-k+1,0)$ or $(0,k)$, we have that there exist $m<n$ such that $v_{m,\eta_{k}}=(n-k+1,0)$ or $v_{m,\eta_{k}}=(0,k)$. Let $l\leq r$ be such that $m=\sum_{j=0}^{l-1}d_{k,j}+c$ and $0< c \leq d_{k,l}$. 

We have four cases: $z_{k}=1$ and $l$ odd; $z_{k}=1$ and $l$ even; $z_{k}=0$ and $l$ odd; $z_{k}=1$ and $l$ even. Consider $z=1$ and $l$ odd. In this case, by definition \ref{Teta},  $v_{m,\eta_{k}}=(\sum_{j\,\,odd}^{l-1}d_{k,j}+c,0)=(n-k+1,0)$. Hence $\sum_{j\,\,odd}^{l-1}d_{k,j}+c=n-k+1$. Then
$$n-k+1=m-\sum_{j\,\,even}^{l-1}d_{k,j}<n-\sum_{j\,\,even}^{l-1}d_{k,j}.$$
This implies that $\sum_{j\,\,even}^{l-1}d_{k,j}+1<k$. Thus
\begin{align*}    f_{k}((\sum_{j\,\,even}^{l-1}d_{k,j}+1)(n,n+1))& < f_{k}(k(n,n+1))\\
    & = k(n-k+1)\\
    & = f_{k}((n-k+1,0))\\
    & = f_{k}(v_{m,\eta_{k}})+r_{m,\eta_{k}}.
\end{align*}
This is a contradiction to lemma \ref{propetak} $2)$. 

Now, suppose that $z_{k}=1$ and $l$ even. For this case, by definition \ref{Teta}, we have that $v_{m,\eta_{k}}=(0,k)$ and $k=\sum_{j\,\,even}^{l-1}d_{k,j}+c$. Then
$$k=\sum_{j\,\,even}^{l-1}d_{k,j}+c=m-\sum_{j\,\,odd}^{l-1}d_{k,j}<n-\sum_{j\,\,odd}^{l-1}d_{k,j}.$$
This implies that $\sum_{j\,\,odd}^{l-1}d_{k,j}+1<n-k+1$. Thus
\begin{align*}
    f_{k}((\sum_{j\,\,odd}^{l-1}d_{k,j}+1)(1,0))& <f_{k}((n-k+1)(1,0))\\
    & =k(n-k+1)\\
    & =f_{k}(k(n,n+1))\\
    & =f_{k}((0,k)+k\cdot n(1,1))\\
    & =f_{k}(v_{m,\eta_{k}})+r_{m,\eta_{k}}.
    \end{align*}
    This is a contradiction to lemma \ref{propetak} $2)$. The other two cases are analogous. 
\end{proof}

Recall that for $J\in S_{A_{n}}$, we denote $m_{J}=\sum_{\be\in J}A_{n}\be$.

\begin{coro}\label{final1}
Let $n\in\N\setminus\{0\}$ and $1\leq k\leq n$. Then, there exists $\eta\in\Omega$ such that $\eta\neq\eta_{k}$ and $f_{k}(m_{J_{\eta}})=f_{k}(m_{J_{\eta_{k}}})$.
\end{coro}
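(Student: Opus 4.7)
The plan is to reduce the equality $f_k(m_{J_\eta}) = f_k(m_{J_{\eta_k}})$ to a pointwise equality of sequences and then construct $\eta$ by a small modification of the last block(s) of $\eta_k$. Using Proposition \ref{corresomegasa} (so that $A_n\cdot J_\eta = T'_\eta$), the decomposition of $T'_\eta$ from Definition \ref{traslacion}, and the linearity of $f_k$ together with $f_k((1,1))=1$, a direct calculation yields
\[f_k(m_{J_\eta}) = C_{n,k} + \sum_{i=1}^{n}(n-i+1)\bigl(f_k(v_{i,\eta})+r_{i,\eta}\bigr),\qquad C_{n,k}=\frac{n(n+1)}{2}+\sum_{i=1}^{n}\binom{n-i+1}{2},\]
where $C_{n,k}$ depends only on $n$ and $k$. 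Writing $w_i(\eta):=f_k(v_{i,\eta})+r_{i,\eta}$, it is therefore enough to exhibit $\eta\in\Omega\setminus\{\eta_k\}$ with $w_i(\eta)=w_i(\eta_k)$ for all $i$.

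The key identity is that the two vectors $(n-k+1,0)$ and $(0,k)+nk(1,1)$ both have $f_k$-value $k(n-k+1)$, since $f_k(t(n,n+1))=t(n-k+1)$. Proposition \ref{finalelem} states that $v_{n,\eta_k}$ is precisely one of $(n-k+1,0)$ or $(0,k)$, so any $\eta$ that reproduces the first $n-1$ vectors of $\eta_k$ but lands $v_{n,\eta}$ at the complementary endpoint will automatically satisfy $w_n(\eta)=w_n(\eta_k)$.

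I would construct $\eta$ in two subcases according to $d_{k,r}$. If $d_{k,r}\geq 2$, split off the last vector into its own new block:
\[\eta := (z_k,\,0,\,d_{k,1},\ldots,d_{k,r-1},\,d_{k,r}-1,\,1).\]
If $d_{k,r}=1$ (which, for $n\geq 2$, forces $r\geq 2$), merge the solitary last block into the previous one:
\[\eta := (z_k,\,0,\,d_{k,1},\ldots,d_{k,r-2},\,d_{k,r-1}+1).\]
In both cases, the first $n-1$ entries of $\eta$ and $\eta_k$ contribute identical $v_{i,\cdot}$ and $r_{i,\cdot}$, while the index $n$ now falls in a block of opposite parity, so $v_{n,\eta}$ lies on the axis opposite to that of $v_{n,\eta_k}$. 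The trivial case $n=1$ is covered by swapping $z_k$, since then $k=n-k+1=1$ makes the two candidate endpoints coincide in $f_k$-value.

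The verification that $v_{n,\eta}$ is genuinely the complementary vector of $\{(n-k+1,0),(0,k)\}$ rests on the identity $\sum_{j\text{ even},\,j\leq r-1}d_{k,j}=k-1$ (when $v_{n,\eta_k}=(n-k+1,0)$) or its odd-axis counterpart, both of which follow from Proposition \ref{finalelem} combined with $\sum_{j}d_{k,j}=n$. The full argument then splits into four symmetric cases by $z_k$ and the parity of $r$; thanks to the axis-swap symmetry built into Definition \ref{Teta}, they are mechanically analogous. The main obstacle is not conceptual but this case-by-case bookkeeping, in particular making sure the reindexing in the modified tuple lands the last vector at exactly $(0,k)$ or $(n-k+1,0)$ on the nose.
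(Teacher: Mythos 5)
Your proposal is correct, and its core is the same as the paper's: both arguments fix the first $n-1$ vectors of $\eta_k$, observe that the two possible final vectors $(n-k+1,0)$ and $(0,k)+nk(1,1)$ have the same $f_k$-value $k(n-k+1)$, and swap between them by modifying the last block — your split $(z_k,0,d_{k,1},\ldots,d_{k,r}-1,1)$ is literally the $\eta$ the paper writes down. The difference is in how the degenerate situation is handled. The paper does not case-split on $d_{k,r}$: it argues, via $6)$ of lemma \ref{rem2}, that $v_{n-1,\eta_k}\in\{(n-k,0),(0,k-1)\}$ and then uses the tie $f_k(k(n,n+1))=f_k((n-k+1,0))$ together with the non-strict maximality in Definition \ref{minimum} to conclude that $v_n$ and $v_{n-1}$ lie in the same block, i.e.\ $d_{k,r}\geq 2$, so only the split construction is ever needed. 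You avoid proving that and instead add the merge case $d_{k,r}=1$ (merging the last block into the previous one, which indeed lands $v_{n,\eta}$ on the complementary endpoint) and the case $n=1$ (swapping $z_k$). This costs a little extra bookkeeping for a case that is vacuous when $n\geq 2$, but it buys robustness: for $n=1$ one has $\eta_1=(z_1,0,1)$, so $d_{k,r}=1$ and the paper's claim ``$d_{k,r}\geq 2$'' (and hence its split, which would create a zero block) breaks down there, while your $z_k$-swap covers exactly that edge case. Your explicit weighted-sum formula $f_k(m_{J_\eta})=C_{n,k}+\sum_i(n-i+1)(f_k(v_{i,\eta})+r_{i,\eta})$ is a clean way of packaging the reduction that the paper performs implicitly, and it is correct.
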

\begin{proof}
By previous proposition, we have that $v_{n,\eta_{k}}=(n-k+1,0)$ or $v_{n,\eta_{k}}=(0,k)$. By lemma $6)$ of \ref{rem2}, we obtain that $\{v_{i,\eta_{k}}\}_{i=1}^{n-1}=\{(t,0)\}_{t=1}^{n-k}\cup\{(0,s)\}_{s=1}^{k-1}$. Moreover, we can deduce that $v_{n-1,\eta_{k}}$ is $(n-k,0)$ or $(0,k-1)$. 

Suppose that $v_{n-1,\eta_{k}}=(n-k,0)$. Since $f_{k}(k(n,n+1))=f_{k}((n-k+1,0))$ and by construction of $\eta_{k}$, we have that $v_{n,\eta_{k}}=(n-k+1,0)$. If $v_{n-1,\eta_{k}}=(0,k-1)$, we obtain that $v_{n,\eta_{k}}=(0,k)$. In any case, we obtain that $d_{k,r}\geq 2$. Then we define $\eta=(z',d_{0}',d_{1}',\ldots,d_{r}',d_{r+1}')$, where $z'=z_{k}$, $d_{i}'=d_{k,i}$ for all $i<r$, $d_{r}'=d_{k,r}-1$ and $d'_{r+1}=1$.

By construction $\sum_{j=0}^{n}d_{j}'=n$ and $d_{j}'>0$ for all $j\in\{1,\ldots,n\}$. This implies that $\eta\in\Omega$. On the other hand, we have that $v_{j,\eta_{k}}=v_{j,\eta}$ for all $j\leq n-1$ and $v_{n,\eta}=(n-k+1,0)$ if $v_{n,\eta_{k}}=(0,k)$ or $v_{n,\eta}=(0,k)$ if $v_{n,\eta_{k}}=(n-k+1,0)$. Since $f_{k}(k(n,n+1))=f_{k}((n-k+1,0))$, we obtain that $f_{k}(m_{J_{\eta}})=f_{k}(m_{J_{\eta_{k}}}).$
\end{proof}

\end{subsection}

\begin{subsection}{$J_{\eta_{k}}\in S_{A_{n}}$ is minimal with respect to $f_{k}$}

\begin{lem}\label{lema31} Let $\be',\be\in\N^{3}$ be such that $\be'\leq\be$ (Recall notation \ref{notacion}). Then 
$$f_{k}(A_{n}\be')\leq f_{k}(A_{n}\be).$$

\end{lem}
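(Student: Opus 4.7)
The plan is to prove this by direct computation: expand $f_k \circ A_n$ as an explicit linear form in the three coordinates of $\beta\in\N^3$ and observe that all coefficients are non-negative (in fact positive) for $k\in\{1,\ldots,n\}$; monotonicity then follows from the coordinate-wise inequality $\beta'\leq\beta$.

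Concretely, writing $\beta=(b_1,b_2,b_3)$ and using the definition of $A_n$ in Notation \ref{notacion}, I would compute
\begin{equation*}
A_n\beta=(b_1+b_2+nb_3,\; b_2+(n+1)b_3).
\end{equation*}
Pairing against $(k,1-k)$ and collecting terms gives
\begin{equation*}
f_k(A_n\beta)=k\,b_1+b_2+(n+1-k)\,b_3.
\end{equation*}
Since $1\leq k\leq n$, all three coefficients $k$, $1$, $n+1-k$ are strictly positive. Hence the map $\beta\mapsto f_k(A_n\beta)$ is a non-negative $\N$-linear combination of the coordinate projections, and so is monotone with respect to the partial order $\leq$ on $\N^3$ defined in Notation \ref{notacion}.

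Finally, given $\beta'\leq\beta$, we have $\pi_i(\beta')\leq\pi_i(\beta)$ for $i=1,2,3$, so term-by-term comparison of the two linear expressions yields $f_k(A_n\beta')\leq f_k(A_n\beta)$, as claimed. There is no real obstacle here; the only thing to be careful about is the range of $k$: the coefficient $n+1-k$ of $b_3$ is the one that forces the restriction $k\leq n$, and the coefficient $k$ forces $k\geq 1$, both of which are part of the hypothesis imposed on $k$ at the start of the subsection.
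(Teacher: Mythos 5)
Your computation is correct and is exactly the ``straightforward computation'' the paper alludes to: $f_k(A_n\beta)=k\,b_1+b_2+(n+1-k)\,b_3$ with all coefficients positive for $1\leq k\leq n$, hence monotone under the coordinatewise order. This matches the paper's (omitted) argument, just written out in full.
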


\begin{proof}This is a straightforward computation.

\end{proof}

\begin{lem}\label{lema32}Let $\be\in\N^{3}$ be such that $A_{n}\be\neq v+q(1,1)$ for all $v\in T_{\eta_{k}}'$ and $q\in\N$. Then $f_{k}(A_{n}\be)\geq f_{k}(v)$ for all $v\in T'_{\eta_{k}}$.

\end{lem}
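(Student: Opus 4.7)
The plan is to (i) compute $f_{k}(A_{n}\beta)$ explicitly, (ii) translate the hypothesis into a geometric condition on $A_{n}\beta$ modulo the $(1,1)$-diagonal action, and (iii) bound both $f_{k}(A_{n}\beta)$ and $\max_{v\in T'_{\eta_{k}}} f_{k}(v)$ by the common value $k(n-k+1)$.

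Writing $\beta = (a,b,c)$, a direct calculation gives $f_{k}(A_{n}\beta) = ka + b + (n-k+1)c$, while $A_{n}\beta = (a+b+nc,\, b+(n+1)c)$ lies on the $(1,1)$-ray through its canonical axis foot, namely $(a-c,0)$ when $a\geq c$ and $(0,c-a)$ when $c > a$. The case $a=c$ puts $A_{n}\beta$ on the main diagonal, which is contained in $\{v+q(1,1)\}$ via $T_{0,\eta_{k}}$, so it is ruled out by hypothesis (assuming $\beta\neq 0$, which holds in every intended application since $\beta\in\Lambda_{3,n}$). Combining Proposition \ref{finalelem} with Lemma \ref{rem2}(6), the axis feet of the rays composing $T'_{\eta_{k}}$ are precisely $(0,0)$ together with $(s,0)$ for $1\leq s\leq n-k+\varepsilon$ and $(0,t)$ for $1\leq t\leq k-\varepsilon$, where $\varepsilon=1$ if $v_{n,\eta_{k}}=(n-k+1,0)$ and $\varepsilon=0$ if $v_{n,\eta_{k}}=(0,k)$. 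The hypothesis therefore forces $a-c\geq n-k+1+\varepsilon$ or $c-a\geq k+1-\varepsilon$; substituting each of these into $ka+b+(n-k+1)c$ yields $f_{k}(A_{n}\beta)\geq k(n-k+1)$ by a one-line calculation (for instance, if $a\geq c+n-k+1$ then $f_{k}(A_{n}\beta)\geq k(n-k+1)+(n+1)c$).

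The remaining step is to show $\max_{v\in T'_{\eta_{k}}} f_{k}(v)=k(n-k+1)$, realised at $v_{n,\eta_{k}}+r_{n,\eta_{k}}(1,1)$. Writing a generic element as $v_{j,\eta_{k}}+(r_{j,\eta_{k}}+p)(1,1)$ with $0\leq p\leq n-j$ (or $v=q(1,1)\in T_{0,\eta_{k}}$), one computes $f_{k}(v)=kq+p$ when $v_{j,\eta_{k}}=(q,0)$ and $f_{k}(v)=(n-k+1)q+p$ when $v_{j,\eta_{k}}=(0,q)$. Using $q\leq j$ from Lemma \ref{rem2}(3) together with the case-dependent upper bounds on $q$ extracted from Lemma \ref{rem2}(6), both expressions collapse to at most $(k-1)(n-k)+n=(n-k)(k-1)+n=k(n-k+1)$, while $T_{0,\eta_{k}}$ contributes at most $n\leq k(n-k+1)$.

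The subtle point is controlling the boundary: the extremal value $q=n-k+1$ in case A (respectively $q=k$ in case B) can occur only at $j=n$, which forces $p=0$, so the bound $k(n-k+1)$ is attained but not exceeded. This alignment is exactly what the construction of $\eta_{k}$ together with Proposition \ref{finalelem} and Lemma \ref{rem2}(5)--(6) guarantees, and it is the hinge that makes the two halves of the argument match.
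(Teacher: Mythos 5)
Your proof is correct and follows essentially the same route as the paper's: both reduce the statement to the two-sided bound $f_{k}(v)\leq k(n-k+1)\leq f_{k}(A_{n}\beta)$ for all $v\in T_{\eta_{k}}'$, proving the left inequality by the same elementary estimate (using $q\leq j$, $p\leq n-j$, Proposition \ref{finalelem} and Lemma \ref{rem2}) and the right one by showing the hypothesis pushes the diagonal foot of $A_{n}\beta$ past the feet of $T_{\eta_{k}}'$, so that $\pi_{1}(\beta)-\pi_{3}(\beta)\geq n-k+1$ or $\pi_{3}(\beta)-\pi_{1}(\beta)\geq k$. The one assertion you leave unjustified --- that the hypothesis excludes a foot $(0,t)$ with $t\leq k-1$ (or $k$) even though the corresponding piece of $T_{\eta_{k}}'$ begins only at diagonal offset $nt$ --- needs the one-line check that any $A_{n}\beta$ with $\pi_{3}(\beta)>\pi_{1}(\beta)$ has offset $\pi_{1}(\beta)+\pi_{2}(\beta)+n\pi_{3}(\beta)\geq n(\pi_{3}(\beta)-\pi_{1}(\beta))$, so it necessarily lies above the shifted segment; the paper's proof elides this same point, and with that line added your argument is complete.
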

\begin{proof}We claim that $f_{k}(v)\leq k(n-k+1)\leq f_{k}(A_{n}\be)$ for all $v\in T_{\eta_{k}}'$ and for all $\be\in\N^3$ with the conditions of the lemma.

We are going to prove the first inequality of the claim. By definition \ref{traslacion}, we have that $T_{\eta_{k}}'=T_{0,\eta_{k}}\bigcup\cup_{j=1}^{n}T_{j,\eta_{k}}+r_{j,\eta_{k}}$, where $T_{0,\eta_{k}}=\{(q,q)\}_{q=1}^{n}$, $r_{j,\eta_{k}}=n\cdot\pi_{2}(v_{j,\eta_{k}})$ and $T_{j,\eta_{k}}+r_{j,\eta_{k}}=\{v_{j,\eta_{k}}+(p+r_{j,\eta_{k}})(1,1)\}_{p=0}^{n-j}$ .  By proposition \ref{finalelem} we have that $v_{n,\eta_{k}}=(0,k)$ or $v_{n,\eta_{k}}=(n-k+1,0)$. Moreover, by $5)$ of lemma \ref{rem2}, we have that $\{v_{j,\eta_{k}}\}_{j=1}^{n-1}=\{(t,0)\}_{t=1}^{n-k}\cup\{(0,s)\}_{s=1}^{k-1}$. 

By definition, $T_{n,\eta_{k}}+r_{n,\eta_{k}}=\{v_{n,\eta_{k}}+r_{n,\eta_{k}}(1,1)\}$. Since we know the two possibilities for $v_{n,\eta_{k}}$, we obtain that $f_{k}(v_{n,\eta_{k}}+r_{n,\eta_{k}}(1,1))=k(n-k+1)$. On the other hand, if $v\in T_{0,\eta_{k}}$, we have that $v=(q,q)$ with $q\leq n$. Since $1\leq k\leq n$, obtaining that $f_{k}(v)=q\leq n\leq k(n-k+1)$. With this, we only have to check the desired inequality for $v\in \cup_{j=1}^{n-1}\{v_{j,\eta_{k}}+(p+r_{j,\eta_{k}})(1,1)\}_{p=0}^{n-j}$. This implies that $v=v_{j,\eta_{k}}+(p+r_{j,\eta_{k}})(1,1)$, for $1\leq j\leq n-1$ and $0\leq p\leq n-j$.

Suppose that $v_{j,\eta_{k}}=(t,0)$ for some $t\leq j$ and recall that, $t\leq n-k$. Then 
\begin{align*}
    f_{k}(v)= & f_{k}(v_{j,\eta_{k}}+(p+r_{j,\eta_{k}})(1,1))\\
    = & f_{k}((t,0))+p+r_{j,\eta_{k}}\\
    \leq & kt+n-j \\
    \leq & kt+n-t \\
    = & (k-1)t+n \\
    \leq & (k-1)(n-k)+n\\
    = & nk-k^{2}+k.
\end{align*}

Now suppose that $v_{j,\eta_{k}}=(0,s)$ for some $s\leq j$ and  recall that $s< k$. Then 
\begin{align*}
    f_{k}(v)= & f_{k}(v_{j,\eta_{k}}+(p+r_{j,\eta_{k}})(1,1))\\
    = & f_{k}((0,s)+p(1,1)+n\cdot s(1,1))\\
    = & f_{k}(s(n,n+1))+p \\
    \leq & s(n-k+1)+(n-j) \\
    \leq & s(n-k+1)+(n-s) \\
    \leq & s(n-k+1)+(n-k)+(k-s)\\
    \leq & s(n-k+1)+(k-s)(n-k)+(k-s)\\
    = & nk-k^{2}+k.
\end{align*}

This proves the first inequality of the claim. For the second inequality notice that 
\begin{align*}
    A_{n}\be= & \pi_{1}(\be)(1,0)+\pi_{2}(\be)(1,1)+\pi_{3}(\be)(n,n+1)\\
    = & (\pi_{1}(\be),0)+(\pi_{2}(\be),\pi_{2}(\be))+(n\pi_{3}(\be),n\pi_{3}(\be)+\pi_{3}(\be))\\
    = & (\pi_{1}(\be),\pi_{3}(\be))+(\pi_{2}(\be)+n\pi_{3}(\be))(1,1) \\
    = & (\pi_{1}(\be)-\pi_{3}(\be),0)+(\pi_{2}(\be)+(n+1)\pi_{3}(\be))(1,1).
\end{align*}

Similarly, we obtain the expression $$A_{n}\be=(0,\pi_{3}(\be)-\pi_{1}(\be))+(\pi_{1}(\be)+\pi_{2}(\be)+n\pi_{3}(\be))(1,1) ).$$
Working with the first expression of $A_{n}\be$ and applying $f_{k}$ to this vector, we obtain that

\begin{align*}
    f_{k}(A_{n}\be)= & f_{k}((\pi_{1}(\be)-\pi_{3}(\be),0)+(\pi_{2}(\be)+(n+1)\pi_{3}(\be))(1,1))\\
    = & f_{k}((\pi_{1}(\be)-\pi_{3}(\be),0))+\pi_{2}(\be)+(n+1)\pi_{3}(\be)\\
    = & k(\pi_{1}(\be)-\pi_{3}(\be))+\pi_{2}(\be)+(n+1)\pi_{3}(\be).
\end{align*}

By the hypothesis over $\be$ and recalling that $\{v_{\eta_{k},j}\}_{j=1}^{n-1}=\{(t,0)\}_{t=1}^{n-k}\cup\{(0,s)\}_{s=1}^{k-1}$, we obtain that $\pi_{1}(\be)-\pi_{3}(\be)\geq n-k+1$. Using the second expression of $A_{n}\be$, we obtain $\pi_{3}(\be)-\pi_{1}(\be)\geq k$. Suppose that $\pi_{1}(\be)-\pi_{3}(\be)\geq n-k+1$. Then

\begin{align*}
    f_{k}(A_{n}\be)= & k(\pi_{1}(\be)-\pi_{3}(\be))+\pi_{2}(\be)+(n+1)\pi_{3}(\be)\\
    \geq & k(n-k+1)+\pi_{2}(\be)+(n+1)\pi_{3}(\be)\\
    \geq & k(n-k+1). 
\end{align*}
Now suppose that  $\pi_{3}(\be)-\pi_{1}(\be)\geq k$. In particular, $\pi_{3}(\be)\geq k$. Then 
\begin{align*}
       f_{k}(A_{n}\be)= & k(\pi_{1}(\be)-\pi_{3}(\be))+\pi_{2}(\be)+(n+1)\pi_{3}(\be)\\
    = & (n+1)\pi_{3}(\be)-k\pi_{3}(\be)+k\pi_{1}(\be)+\pi_{2}(\be)\\
    = & (n-k+1)\pi_{3}(\be)+k\pi_{1}(\be)+\pi_{2}(\be)\\
    \geq & (n-k+1)k+k\pi_{1}(\be)+\pi_{2}(\be)\\
    \geq & nk-k^{2}+k.
\end{align*}
In any case, we obtain that $f_{k}(A_{n}\be)\geq k(n-k+1)$ for all $\be\in\N^{3}$ with the conditions of the lemma as we claim.
\end{proof}

\begin{lem}\label{lema33}Let $v=v_{l,\eta_{k}}+q(1,1)\in\N^{2}$ be with $l\leq n$ and $q\geq n-l+1+r_{l,\eta_{k}}$. Then 
$f_{k}(v)\geq f_{k}(u)$ for all $u\in T_{0,\eta_{k}}\bigcup\cup_{j=1}^{l}T_{j,\eta_{k}}+r_{j,\eta_{k}}$.

\end{lem}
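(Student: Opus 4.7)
The plan is to reduce everything to a single numerical inequality on the values $F(i):=f_{k}(v_{i,\eta_{k}}+r_{i,\eta_{k}}(1,1))$ and then exploit properties $4)$ and $5)$ of Lemma \ref{propetak}. Since $f_{k}((1,1))=1$, translation along the diagonal shifts $f_{k}$ by the translation amount, so the hypothesis $q\geq n-l+1+r_{l,\eta_{k}}$ immediately yields $f_{k}(v)\geq F(l)+(n-l+1)$. On the other hand, every $u\in T_{j,\eta_{k}}+r_{j,\eta_{k}}$ has the form $v_{j,\eta_{k}}+(p+r_{j,\eta_{k}})(1,1)$ with $0\leq p\leq n-j$, so $f_{k}(u)\leq F(j)+(n-j)$; and every $u=(q',q')\in T_{0,\eta_{k}}$ satisfies $f_{k}(u)=q'\leq n$. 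Thus $f_{k}(v)\geq f_{k}(u)$ reduces to $F(l)-F(j)\geq l-j-1$ in the first case and to $F(l)\geq l-1$ in the second.

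Next I would establish the key inequality
\[
F(l)-F(j)\geq l-j-1\quad\text{for all }1\leq j\leq l\leq n
\]
by strong induction on $l-j$. The cases $l-j\leq 1$ are immediate from monotonicity $4)$ of Lemma \ref{propetak}. For $l-j\geq 2$, I would split on whether $F(l)>F(l-1)$ or $F(l)=F(l-1)$. In the first case, $F(l)-F(j)\geq 1+(F(l-1)-F(j))\geq 1+(l-j-2)$ by the inductive hypothesis. In the second case $l\geq 3$, so $5)$ of Lemma \ref{propetak} applies and gives $F(l)\geq F(l-2)+2$; thus $F(l)-F(j)\geq 2+(F(l-2)-F(j))\geq 2+(l-j-3)$, again by induction. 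The required bound $F(l)\geq l-1$ for the $T_{0,\eta_{k}}$ case then follows by applying the key inequality with $j=1$, using $F(1)\in\{k,\,n-k+1\}\subset\N\setminus\{0\}$, so that $f_{k}(v)\geq (l-1)+(n-l+1)=n\geq f_{k}(u)$.

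The main obstacle is the key inequality itself. Naively a ``stall'' $F(l)=F(l-1)$ drops the running gap by $1$, but Lemma \ref{propetak} $5)$ forbids three consecutive equal values of $F$ by guaranteeing a compensating jump of size at least $2$ at the position immediately preceding any stall. The cleanest way to make this compensation work uniformly over the whole interval $[j,l]$ is the two-step induction above, which turns the compensation into a local statement at each inductive step and avoids a more cumbersome global count of stalls and jumps.
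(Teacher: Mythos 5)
Your proof is correct and uses essentially the same ingredients and structure as the paper's: monotonicity (Lemma \ref{propetak} 4)) together with the two-step fallback via Lemma \ref{propetak} 5) when $F(l)=F(l-1)$, the only difference being that you package this as a standalone numerical inequality $F(l)-F(j)\geq l-j-1$ proved by induction on $l-j$, whereas the paper runs the same case analysis as an induction on $l$ over the full statement of the lemma.
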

\begin{proof}
We proceed by induction over $l$. Consider $l=1$. Then $v=v_{1,\eta_{k}}+q(1,1)$, with $q\geq n+r_{1,\eta_{k}}$ and we need to prove that $f_{k}(v)\geq f_{k}(u)$ for all $u\in T_{0,\eta_{k}}\cup T_{1,\eta_{k}}+r_{1,\eta_{k}}$. 
If $u\in T_{1,\eta_{k}}+r_{1,\eta_{k}}$, then $u=v_{1,\eta_{k}}+(p+r_{1,\eta_{k}})(1,1)$, with $p\leq n-1$. It follows that 
\begin{align*}
    f_{k}(v)= & f_{k}(v_{1,\eta_{k}}+q(1,1))\\
    = & f_{k}(v_{1,\eta_{k}})+q\\
    \geq & f_{k}(v_{1,\eta_{k}})+n+r_{1,\eta_{k}}\\
    \geq &  f_{k}(v_{1,\eta_{k}})+p+r_{1,\eta_{k}}\\
    = & f_{k}(v_{1,\eta_{k}}+p+r_{1,\eta_{k}}(1,1))\\
    = & f_{k}(u).
\end{align*}
If $u\in T_{0,\eta_{k}}$, then $u=p(1,1)$, with $p\leq n$. By definition \ref{Teta}, $v_{1,\eta_{k}}=(1,0)$ or $v_{1,\eta_{k}}=(0,1)$. Thus $f_{k}(v)=k+q\geq k+n$ or $f_{k}(v)=(1-k)+q\geq n+(n-k+1)$. Since $k\in\{1,\ldots,n\}$, in any case we have that 
\begin{equation}\label{eq1}
    f_{k}(v)>n\geq p=f_{k}(u).
\end{equation}
We conclude that is true for $l=1$. 

Now, suppose that is true for all $l'<l$, i.e., $f_{k}(v_{l',\eta_{k}}+q'(1,1))\geq f_{k}(u)$ for all $u\in T_{0,\eta_{k}}\bigcup\cup_{j=1}^{l'}T_{j,\eta_{k}}+r_{j,\eta_{k}}$ and $q'\geq n-l'+1+r_{l',\eta_{k}}$. Let $v=v_{l,\eta_{k}}+q(1,1)$ with $q\geq n-l+1+r_{l,\eta_{k}}$. If $u\in T_{l,\eta_{k}}+r_{l,\eta_{k}}$, we have that $u=v_{l,\eta_{k}}+(p+r_{l,\eta_{k}})(1,1)$, with $p\leq n-l$. Then
\begin{align*}
    f_{k}(v)= & f_{k}(v_{l,\eta_{k}})+q\\
    \geq & f_{k}(v_{l,\eta_{k}})+n-l+1+r_{l,\eta_{k}}\\
    \geq & f_{k}(v_{l,\eta_{k}})+p+r_{l,\eta_{k}}\\
    = & f_{k}(v_{l,\eta_{k}}+(p+r_{l,\eta_{k}})(1,1))\\
    = & f_{k}(u).
\end{align*}
Thus $f_{k}(v)\geq f_{k}(u)$ for all $u\in T_{l,\eta_{k}}+r_{l,\eta_{k}}$. Consider $u\in T_{l-1,\eta_{k}}+r_{\eta_{k},l-1}$. By definition, $u=v_{l-1,\eta_{k}}+(p+r_{l-1,\eta_{k}})(1,1)$ with $p\leq n-l+1$. By lemma \ref{propetak} $4)$, $f_{k}(v_{l,\eta_{k}}+r_{l,\eta_{k}})\geq f_{k}(v_{l-1,\eta_{k}}+r_{l-1,\eta_{k}})$. Then 
\begin{align*}
    f_{k}(v)= & f_{k}(v_{l,\eta_{k}})+q\\
    \geq & f_{k}(v_{l,\eta_{k}})+n-l+1+r_{l,\eta_{k}}\\
    = & f_{k}(v_{l,\eta_{k}}+r_{l,\eta_{k}}(1,1))+n-l+1\\
    \geq & f_{k}(v_{l-1,\eta_{k}}+r_{l-1,\eta_{k}}(1,1))+p\\
    = & f_{k}(v_{l-1,\eta_{k}}+(p+r_{l-1,\eta_{k}})(1,1))\\
    = & f_{k}(u).
\end{align*}
Obtaining that the statement is true for all $u\in T_{l-1,\eta_{k}}+r_{l-1,\eta_{k}}$.

Suppose  $f_{k}(v_{l,\eta_{k}}+r_{l,\eta_{k}})\geq f_{k}(v_{l-1,\eta_{k}}+r_{l-1,\eta_{k}})+1$. Obtaining that
\begin{align*}
    f_{k}(v)\geq & f_{k}(v_{l,\eta_{k}}+r_{l,\eta_{k}}(1,1))+n-l+1\\
    \geq & f_{k}(v_{l-1,\eta_{k}}+r_{l-1,\eta_{k}}(1,1))+n-l+2.
\end{align*}
Then, by the induction hypothesis over $l-1$, $f_{k}(v)\geq f_{k}(u)$ for all $u\in T_{0,\eta_{k}}\bigcup\cup_{j=1}^{l-1}T_{j,\eta_{k}}+r_{j,\eta_{k}}$, obtaining the result.

Now suppose that $f_{k}(v_{l,\eta_{k}}+r_{l,\eta_{k}})=f_{k}(v_{l-1,\eta_{k}}+r_{l-1,\eta_{k}})$. For this, we have two cases, if $l=2$ or $l>2$. If $l=2$, by (\ref{eq1}), we have that 
\begin{align*}
    f_{k}(v)= & f_{k}(v_{2,\eta_{k}})+q \\
    \geq & f_{k}(v_{2,\eta_{k}})+n-1+r_{2,\eta_{k}}\\
    = & f_{k}(v_{2,\eta_{k}}+r_{2,\eta_{k}}(1,1))+n-1\\
    = & f_{k}(v_{1,\eta_{k}}+r_{1,\eta_{k}}(1,1))+n-1\\
    = & f_{k}(v_{1,\eta_{k}}+(n-1+r_{1,\eta_{k}})(1,1))\\
    \geq & n\\
    \geq & f_{k}(u),
\end{align*}
for all $u\in T_{0,\eta_{k}}$. If $l>2$, then for all $u\in T_{0,\eta_{k}}\bigcup\cup_{j=1}^{l-2}T_{j,\eta_{k}}+r_{j,\eta_{k}}$, we have that
\begin{align*}
    f_{k}(v)\geq & f_{k}(v_{l,\eta_{k}}+r_{l,\eta_{k}}(1,1))+n-l+1\\
    \geq & f_{k}(v_{l-2,\eta_{k}}+r_{l-2,\eta_{k}}(1,1))+n-l+3\\
    \geq & f_{k}(u),
\end{align*}
where the second inequality comes from $5)$ of lemma \ref{propetak} and the last inequality comes from the induction hypothesis over $l-2$. Obtaining the result.

\end{proof}

Now we are ready to prove the other important result of this section.

\begin{pro}\label{final2}
Let $\eta_{k}\in\Omega$ and its respective $J_{\eta_{k}}\in S_{A_{n}}$. Then for all $J\in S_{A_{n}}$ we have that $f_{k}(m_{J_{\eta_{k}}})\leq f_{k}(m_{J})$.
\end{pro}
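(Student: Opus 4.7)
The plan is to establish the pointwise inequality
\begin{equation*}
n_J(N) := |\{\be \in J : f_k(A_n \be) \le N\}| \le |\{\ga \in J_{\eta_k} : f_k(A_n \ga) \le N\}| =: n_{J_{\eta_k}}(N)
\end{equation*}
for every $N \in \N$, from which Proposition \ref{final2} follows. Indeed, using the explicit formula $f_k(A_n(a,b,c)) = ka + b + (n-k+1)c$, which takes non-negative integer values on $\Lambda_{3,n}$ since $k, n-k+1 \ge 1$, the layer-cake identity
\begin{equation*}
f_k(m_J) = \sum_{\be \in J} f_k(A_n \be) = \sum_{N \ge 0}\big(|J| - n_J(N)\big)
\end{equation*}
together with $|J| = |J_{\eta_k}| = \lambda_{2,n}$ gives $f_k(m_J) \ge f_k(m_{J_{\eta_k}})$.

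Fix $N$ and set $V_N := \{v \in A_n\Lambda_{3,n} : f_k(v) \le N\}$ and $W_N := \spn_{\C}\{\bar v : v \in V_N\} \subseteq \C^{\lambda_{2,n}}$. Because the coefficients $k$, $1$, $n-k+1$ of the linear form $\be \mapsto f_k(A_n \be)$ are positive, Lemma \ref{lema31} gives $f_k(A_n\ga) \le f_k(A_n\be) \le N$ whenever $\ga \le \be$ and $f_k(A_n\be) \le N$. Hence $c_\be = \sum_{\ga \le \be}(-1)^{|\be-\ga|}\binom{\be}{\ga}\overline{A_n\ga}$ lies in $W_N$ whenever $f_k(A_n\be) \le N$; since $\det L_J^c \ne 0$ means $\{c_\be\}_{\be \in J}$ is linearly independent in $\C^{\lambda_{2,n}}$, the subset $\{c_\be : \be \in J,\ f_k(A_n\be) \le N\}$ is a linearly independent family in $W_N$, giving $n_J(N) \le \dim W_N$.

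It remains to prove $\dim W_N = n_{J_{\eta_k}}(N)$. Since $A_n$ restricts to a bijection $J_{\eta_k} \to T'_{\eta_k}$ (Proposition \ref{corresomegasa}) and $\overline{T'_{\eta_k}}$ is a basis of $\C^{\lambda_{2,n}}$ (Propositions \ref{indepen} and \ref{gene}), we have $n_{J_{\eta_k}}(N) = |V_N \cap T'_{\eta_k}|$, and $\overline{V_N \cap T'_{\eta_k}}$ is linearly independent, so the target equality reduces to $W_N \subseteq \spn_{\C}\{\bar u : u \in V_N \cap T'_{\eta_k}\}$. The plan is to verify this by a case analysis on $v \in V_N$: (i) if $v \in T'_{\eta_k}$, nothing to prove; (ii) if $v = q(1,1)$ with $q > n$, Lemma \ref{depenrecta} places $\bar v$ in $\spn_{\C}\overline{T_{0,\eta_k}}$, and the bound $f_k(u) \le n < q \le N$ for $u \in T_{0,\eta_k}$ ensures $T_{0,\eta_k} \subseteq V_N \cap T'_{\eta_k}$; (iii) if $v = v_{l,\eta_k} + q(1,1)$ for some $l \in \{1,\ldots,n\}$ but $v \notin T_{l,\eta_k} + r_{l,\eta_k}$, a short preimage calculation under $A_n$ (the delicate subcase being $v_{l,\eta_k} = (0,s)$, in which requiring $\pi_2(\be) \ge 0$ in $A_n(a,b,c) = (q, s+q)$ forces $q \ge ns = r_{l,\eta_k}$) yields $q \ge n - l + 1 + r_{l,\eta_k}$, at which point Lemma \ref{lema33} places $U_l^* := T_{0,\eta_k} \cup \bigcup_{j=1}^l (T_{j,\eta_k} + r_{j,\eta_k})$ inside $V_N$ and Proposition \ref{gene} places $\bar v$ in $\spn_{\C}\overline{U_l^*}$; (iv) if $v$ lies on no cheap diagonal, Lemma \ref{lema32} puts all of $T'_{\eta_k}$ in $V_N$, and $\overline{T'_{\eta_k}}$ already spans $\C^{\lambda_{2,n}}$.

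Combining the two bounds gives $n_J(N) \le \dim W_N = n_{J_{\eta_k}}(N)$, as required. The main obstacle I anticipate is the preimage bookkeeping in case (iii), namely confirming that the cheap diagonal through $v_{l,\eta_k} = (0,s)$ contains no element of $A_n\Lambda_{3,n}$ strictly below $v_{l,\eta_k} + r_{l,\eta_k}(1,1)$; this is a short direct computation from the definition of $A_n$, and once it is settled the remainder of the argument is a routine application of Lemmas \ref{lema31}, \ref{lema32}, \ref{lema33}, \ref{depenrecta}, and Proposition \ref{gene}.
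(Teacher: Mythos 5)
Your argument is correct, and it assembles the paper's ingredients in a genuinely different way. The paper fixes $J$, uses Laplace expansion on $\det L_J^c$ to replace each row $c_{\be_i}$ by a single vector $\overline{A_n\be_i'}$ with $\be_i'\leq\be_i$ (so $f_k$ drops by Lemma \ref{lema31}), and then builds, by iterated determinant exchange through the partition $B_0,B_1,B_2,B_3$, an explicit value-decreasing bijection $\varphi\colon B\to T'_{\eta_k}$ before summing. You instead compare the counting functions $n_J(N)$ and $n_{J_{\eta_k}}(N)$ level by level: the observation that $\ga\leq\be$ forces $f_k(A_n\ga)\leq f_k(A_n\be)$ puts each low-value row $c_\be$ directly into the filtered subspace $W_N$ (bypassing the paper's row-reduction step and any choice of $\be_i'$), the linear independence of the rows gives $n_J(N)\leq\dim W_N$, and the same case analysis the paper uses for $B_0,B_1,B_2,B_3$ (Lemma \ref{depenrecta}, Proposition \ref{gene} with Lemma \ref{lema33}, and Lemma \ref{lema32}) shows $W_N$ is spanned by the low-value part of the basis $\overline{T'_{\eta_k}}$, so $\dim W_N\leq n_{J_{\eta_k}}(N)$; the layer-cake summation then replaces the explicit matching. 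What your route buys is a cleaner bookkeeping: no injectivity tracking across the four blocks and no iterated exchange argument, just a monotone rank bound; what the paper's route buys is an explicit correspondence between $J$ and $T'_{\eta_k}$. The one computation you flag as the main obstacle — that a point of $A_n\Lambda_{3,n}$ on the diagonal through $v_{l,\eta_k}=(0,s)$ necessarily has $q\geq ns=r_{l,\eta_k}$, whence $q\geq n-l+1+r_{l,\eta_k}$ once the point lies outside $T_{l,\eta_k}+r_{l,\eta_k}$ — is indeed needed (the paper asserts the same inequality in its treatment of $B_1$ with no more detail), and it checks out: if $A_n(a,b,c)=(q,s+q)$ then $c=a+s$ and $q=(n+1)a+b+ns\geq ns$, so your plan closes completely.
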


\begin{proof}
Let $J=\{\beta_{1},\ldots,\beta_{\lambda_{2,n}}\}\in S_{A_{n}}$. By definition of $S_{A_{n}}$ we have that $0\neq \det\Big(c_{\beta_{i}}\Big)_{1\leq i\leq\lambda_{2,n}}$, where $c_{\beta_{i}}:=\sum_{\gamma\leq\beta_{i}}
(-1)^{|\beta_{i}-\gamma|}\binom{\beta_{i}}{\gamma}\overline{A_{n}\ga}$ (Recall notation \ref{notacion}). 

Fixing the $\be_{1}$th row of this matrix and using basic properties of determinants, we obtain that

$$0\neq \det\Big(c_{\beta_{i}}\Big)_{1\leq i\leq\lambda_{2,n}}=\sum_{\ga\leq\beta_{1}}(-1)^{|\beta_{i}-\gamma|}\binom{\beta_{i}}{\gamma}\det\left( \begin{array}{c}
\overline{A_{n}\ga} \\
c_{\be_{2}} \\
\ldots \\
c_{\beta_{\lambda_{2,n}}}\end{array} \right).$$
Since the determinant is not zero, this implies that there exists $\be_{1}'\leq\be_{1}$ such that $$\det\left( \begin{array}{c}
\overline{A_{n}\be_{1}'} \\
c_{\be_{2}} \\
\ldots \\
c_{\beta_{\lambda_{2,n}}}\end{array} \right)\neq0.$$

Applying this process for each row, we obtain the a of vectors $B=\{\be_{i}'\}_{i=1}^{\lambda_{2,n}}\subset\Lambda_{3,n}$ such that $\be_{i}'\leq\be_{i}$ for all $i\in\{1,\ldots,\lambda_{2,n}\}$ and with the property $\det \Big(\overline{A_{n}\be_{i}'}\Big)_{1\leq i\leq\lambda_{2,n}}\neq0$.

The goal is to construct a bijective correspondence $\varphi:B\rightarrow T_{\eta_{k}}'$, such that $f_{k}(A_{n}\be_{i}')\geq f_{k}(\varphi(\be_{i}'))$. Consider the set $v_{j,\eta_{k}}+L:=\{v_{j\eta_{k}}+p(1,1)\mid p\in\N\}$. Now, consider the following partition of $B$:
$$B_{0}=\{\be'_{i}\in B\mid A_{n}\be'_{i}\in T_{\eta_{k}}'\},$$
$$B_{1}=\{\be'_{i}\in B\mid A_{n}\be'_{i}\in (v_{j,\eta_{k}}+L)\setminus T_{j,\eta_{k}}+r_{j,\eta_{k}}\,\,\,\mbox{for some}\,\,\,j\in\{1,\dots,n\}\},$$
$$B_{2}=\{\be'_{i}\in B\mid A_{n}\be_{i}'=q(1,1) \,\,\mbox{for some}\,\, q> n\},$$
$$B_{3}=\{\be'_{i}\in B\mid A_{n}\be'_{i}\notin (v_{j,\eta_{k}}+L)\,\,\,\mbox{for all}\,\,\,j\in\{0,\dots,n\}\},$$

    For all $\be_{i}'\in B_{0}$, we define $\varphi(\be_{i}')=A_{n}\be_{i}'$. Since $\det \Big(\overline{A_{n}\be_{i}'}\Big)_{1\leq i\leq\lambda_{2,n}}\neq0$, we have that $\varphi(\be_{i}')\neq\varphi(\be_{j}')$ for all $\be_{i}',\be_{j}'\in B_{0}$. 
    
    Now, if $B_{1}\neq\emptyset$, we rearrange $B$ in such a way that $\{\be_{1}',\be_{2}',\ldots,\be_{m}'\}=B_{1}$. Consider $\be_{1}'\in B_{1}$. By construction of $B_{1}$, there exist $l\leq n$ and $q\in\N$ such that $A_{n}\be_{1}'=v_{l,\eta_{k}}+q(1,1)$. By  proposition \ref{gene} we have that 
    \begin{multline*}
        \overline{A_{n}\be_{1}'}\in \spn_{\C}\{\overline{v}\in\C^{\lambda_{2,n}}\mid v\in \bigcup_{j=0}^{l}T_{j,\eta_{k}}\}\\
    =\spn_{\C} \{\overline{v}\in\C^{\lambda_{2,n}}\mid v\in T_{0,\eta_{k}}\bigcup\cup_{j=1}^{l}T_{j,\eta_{k}}+r_{j,\eta_{k}}\}.
        \end{multline*}
This implies that $\overline{A_{n}\be_{1}'}=\sum_{v\in T_{0,\eta_{k}}\bigcup\cup_{j=1}^{l}T_{j,\eta_{k}}+r_{j,\eta_{k}}}a_{v}\overline{v}$, for some constants $a_{v}\in\C$. Using again basic properties of the determinant, we obtain there exists $u_{\be_{1}'}\in T_{0,\eta_{k}}\bigcup\cup_{j=1}^{l}T_{j,\eta_{k}}+r_{j,\eta_{k}}$ such that 
$$\det\left( \begin{array}{c}
\overline{u_{\be_{1}'}} \\
\overline{A_{n}\be_{2}'} \\
\vdots \\
\overline{A_{n}\beta_{\lambda_{2,n}}'}\end{array} \right)\neq 0.$$
Applying this process for each element of $B_{1}$, we obtain the vectors $\{u_{\be_{j}'}\}_{j=1}^{m}$. We define $\varphi(\be_{j}')=u_{\be_{j}'}$ for all $j\in\{1,\ldots,m\}$. Now, we need to check that $\varphi$ is injective on $B_{0}\cup B_{1}$ and $f_{k}(A_{n}\be_{i}')\geq f_{k}(\varphi(\be_{i}'))$.

Notice that, by construction
$$\det\left( \begin{array}{c}
\overline{u_{\be_{1}'}} \\
\vdots \\
\overline{u_{\be_{m}'}}\\
\overline{A_{n}\be_{m+1}'} \\
\vdots \\
\overline{A_{n}\beta_{\lambda_{2,n}}'}\end{array} \right)\neq 0.$$
This implies that $\overline{u_{\be_{i}'}}\neq\overline{u_{\be_{j}'}}$ for all $1\leq i<j\leq m$. In particular we have that $u_{\be_{i}'}\neq u_{\be_{j}'}$. Moreover, using the same argument, we have that $u_{\be_{i}'}\neq A_{n}\be_{j}'=\varphi(\be_{j}')$ for all $\be_{j}'\in B_{0}$. Thus, $\varphi$ is injective on $B_{0}\cup B_{1}$. 

On the other hand, $A_{n}\be_{i}'=v_{l,\eta_{k}}+q(1,1)\notin T_{l,\eta_{k}}+r_{l,\eta_{k}}$ for some $l\leq n$. This implies that $q\geq n-l+1+r_{l,\eta_{k}}$. Then
\begin{align*}
    f_{k}(A_{n}\be_{i}')= & f_{k}(v_{l,\eta_{k}}+q(1,1)) \\
    = & f_{k}(v_{l,\eta_{k}})+q\\
    \geq & f_{k}(v_{l,\eta_{k}})+n-l+1+r_{l,\eta_{k}}\\
    = & f_{k}(v_{l,\eta_{k}}+(n-l+1+r_{l,\eta_{k}})(1,1))\\
    \geq & f_{k}(u_{\be_{i}'}).
\end{align*}
where the last inequality comes from lemma \ref{lema33}, obtaining the inequality we are looking for.

For all $\be_{i}'\in B_{2}$, we have that $A_{n}\be_{i}'=q(1,1)$, for some $q>n$. By lemma \ref{depenrecta}, we have that $\overline{A_{n}\be_{i}'}=\sum_{v\in T_{0,\eta_{k}}}a_{v}\overline{v}$. Applying the same method that for the elements of $B_{1}$, we define can  $\varphi$ with the properties that we need.

Now, since $\mid T_{\eta_{k}}'\mid=\mid B\mid=\lambda_{2,n}$, we have that $\mid B_{3}\mid=\mid T_{\eta_{k}}\setminus\{\varphi(\be_{j}')\mid\be_{j}'\in B_{0}\cup B_{1}\cup B_{2}\}\mid$. Then we take $\varphi(\be_{i}')=v$, with $v\in T_{\eta_{k}}\setminus\{\varphi(\be_{j}')\mid\be_{j}'\in B_{0}\cup B_{1}\cup B_{2}\}$ in such a way that $\varphi(\be_{i}')\neq\varphi(\be_{j}')$ for all $\be_{i}',\be_{j}'\in B_{3}$ and $\be_{i}'\neq\be_{j}'$. 

By construction we obtain that $\varphi$ is a bijective correspondence and by definition of $B_{3}$ and lemma \ref{lema32} we have that $f_{k}(A_{n}\be_{i}')\geq f_{k}(\varphi(\be_{i}'))$ for all $\be_{i}'\in B_{3}$. Then 
\begin{align*}
   f_{k}(m_{J})= & \sum_{\be_{i}\in J}f_{k}(A_{n}\be_{i}) \\
    \geq & \sum_{\be_{i}'\in B}f_{k}(A_{n}\be_{i}')\\
    \geq & \sum_{\be_{i}'\in B}f_{k}(\varphi(\be_{i}'))\\
    = & \sum_{b\in T_{\eta_{k}}'}f_{k}(v)\\
      = & f_{k}(m_{J_{\eta_{k}}}),
\end{align*}
where the first inequality comes from lemma \ref{lema31} and the second comes from the construction of $\varphi$.
\end{proof}

Now we are ready to prove Theorem \ref{mainn}.

\begin{proof}
 By proposition \ref{corresomegasa}, $J_{\eta_{k}}\in S_{A_{n}}$. By corollary \ref{final1}, there exists $J_{\eta}\in S_{A_{n}}$ such that $J_{\eta}\neq J_{\eta_{k}}$ and $f_{k}(m_{J_{\eta}})=f_{k}(m_{J_{\eta_{k}}})$. Using  proposition \ref{final2}, we obtain that $\mbox{ord}_{I_{n}}((k,1-k))=f_{k}(m_{J_{\eta_{k}}})$. This implies that $(k,1-k)\in \sigma_{m_{J_{\eta}}}\cap\sigma_{m_{J_{\eta_{k}}}}$.

\end{proof}

\end{subsection}

\section*{Acknowledgements}
I would like to thank Daniel Duarte for follow my work and stimulating discussions. I also thank Takehiko Yasuda for propose me this problem.

\end{section}

\vspace{.5cm}

\noindent{\footnotesize {Enrique Ch\'avez-Mart\'inez,\\
Universidad Nacional Aut\'onoma de M\'exico.\\
Av. Universidad s/n. Col. Lomas de Chamilpa, C.P. 62210, Cuernavaca, Morelos, Mexico.\\
Email: enrique.chavez@im.unam.mx}

\end{document}